\tikzset{
    >=stealth'
}
\theoremstyle{plain}
\newtheorem{theorem}{Theorem}[section]
\newtheorem{lemma}[theorem]{Lemma}
\newtheorem{corollary}[theorem]{Corollary}
\newtheorem{proposition}[theorem]{Proposition}
\newtheorem{theorem*}{Theorem}
\newtheorem{thmy}{Theorem}
\theoremstyle{definition}
\newtheorem{definition}[theorem]{Definition}
\theoremstyle{remark}
\newtheorem{remark}[theorem]{Remark}
\def\R#1{\mathbb{R}^{#1}}
\def\B{\mathscr{B}}
\def\Lop{\mathcal{L}}
\def\Kop{\bar{\mathcal{K}}}
\DeclareMathOperator{\supp}{\mathrm{supp}}
\DeclareMathOperator{\diag}{\mathrm{diag}}
\DeclareMathOperator{\spectrum}{\mathrm{spec}}
\def\half#1#2{\begin{matrix}\frac{#1}{#2}\end{matrix}}
\def\Char{\mathscr{C}}
\def\Kopp{\widetilde{\mathcal{K}}}
\def\Rlo{\widetilde{\mathscr{R}}_0}
\def\Rol{\mathscr{R}_0}
\def\skob#1{\langle #1 \rangle_N}
\begin{document}                        

\title{Permanency of the age-structured population model on several temporally variable patches}

\author{Vladimir Kozlov}{Department of Mathematics, Link\"oping University}
\author{Sonja Radosavljevic}{Department of Mathematics, Link\"oping University}
\author{Vladimir Tkachev }{Department of Mathematics, Link\"oping University}
\author{Uno Wennergren}{Department of Physics, Chemistry, and Biology, Link\"oping University}






\begin{abstract}
We consider a system of nonlinear partial differential equations that describes an age-structured population inhabiting several temporally varying patches. We prove existence and uniqueness of solution and analyze its large-time behavior in cases when the environment is constant and when it changes periodically. A pivotal assumption is that individuals can disperse and that each patch can be reached from every other patch, directly or through several intermediary patches.
We introduce the net reproductive operator and characteristic equations for time-independent and periodical models and prove that permanency is defined by the net reproductive rate for the whole system. If the net reproductive rate is less or equal to one, extinction on all patches is imminent. Otherwise, permanency on all patches is guaranteed. The proof is based on a new approach to analysis of large-time stability.
\end{abstract}

\maketitle



\tableofcontents



\section{Introduction}

Population permanency in a patchy environment is the result of a complex interaction between spatial heterogeneity and temporal variability of the environment, dispersal, density-dependence and population structure. Each of these factors have relative importance for population growth and it differs for terrestrial and aquatic species, large and small populations, plants and animals, vertebrates and invertebrates etc.; see for instance  \cite{BG}, \cite{KW1}, \cite{Rg1979}.

One way to theoretically approach the problem of population dynamics is by formulating mathematical models that incorporate internal and external factors of population growth. The literature on the population models with various level of complexity is quite vast and  detailed review is beyond the scope of this paper. We mention only some of the well-established models that have been developed over the years. Among the unstructured models, the Malthus model of exponential growth and the Verhulst logistic model are especially important. For the age-structured models with density-dependency or time-dependency we refer to \cite{Grt},  \cite{Chipot1}, \cite{Chipot2}, \cite{DGH}, \cite{Pruss81}, \cite{Webb1985}, \cite{Ianelli2014}, \cite{we1}, \cite{we2}. The common point for the age-structured models is that {\it the net reproductive rate} and {\it the characteristic equation} are used to determine permanency of a population.

The spatial structure has been recognized as one of the most important factors of growth. In this case, each individual’s birth and death rate
are dependent upon the habitat/patch where they are in the landscape. For simplicity, let a population inhabit a discrete space which consists of several patches. A source is a high-quality patch that yields positive population growth, while a sink is a low-quality patch and it yields negative growth rate. In isolation, every subpopulation has its own dynamics. Linking the patches by dispersal lead us to the source-sink dynamics, where all local subpopulations contribute to the unique global dynamics. For populations that inhabit several patches, possibility to move from one patch to another can be crucial for survival. For example, dispersal from a source to a sink can save the local sink subpopulation from extinction through the rescue effect and recolonization \cite{PA}, \cite{PDias}, \cite{AH1}. The influence of spatial heterogeneity in unstructured populations was studied in \cite{Allen}, \cite{Arditi2015}, \cite{CChen1}, \cite{CChen2}, \cite{DeAngelis2016}, \cite{DeAngelis2014}. The trade-off between competition and dispersal is investigated in \cite{Amrasekare2001} and the relation between dispersal pattern and permanency was discussed in \cite{Hastings2006}, \cite{JanY}.

The continuous age-structured models with spatial structure can be divided into classes. In the first type of models individuals occupy position in a spatial environment and spatial movement is typically controlled by diffusion or taxis processes \cite{Webb2008}. In the second class fits the models with several species or populations occupying different regions (`patches') accompanying with migration between them. The usual practice here is to have only two classes (immature and adults) and dispersion between a few (two or three) temporally unchangeable patches, as in e.g. \cite{SWZ}, \cite{Tak1}, \cite{Tak2}, \cite{Terry}, \cite{WXZ}.

In this paper, we provide a rigorous mathematical derivation of the results considering the existence and uniqueness of a solution in a fairly general form in presence of migrations. Inspired by the single-patch models we come to the  fundamental questions:
\begin{itemize}
\item Is it possible to define an analogue of the characteristic equation and the net reproductive rate for the several-patches model?
\item If so, can they be used for the analysis of the large-time behavior of the solution and for establishing the condition for the population's permanency?
\end{itemize}

The main contribution of the paper is in the rigorous proof that the both questions have affirmative answers in the constant, periodic and the general time-dependent case. The method that we use for the time-dependent cases allows us to consider fluctuations that are not necessarily small in amplitude. Besides, we use general results to discuss the real world problems, such as the survival of migrating species and pest control.

To set up the model, we follow the argument of \cite{we3} and \cite{we2}, and assume that a population is age-structured, density-dependent and inhabits $N$ temporally variable and different patches. A local subpopulation on each patch experiences intraspecific competition, which results in additional density-dependent mortality. Let   $n_k(a,t)$ denote the age distribution in the population patch $k$ at
time $t$ with the  corresponding  birth rate $m_k(a,t)$ and  the initial distribution of population $f_k(a)$. Then the assumption that only the members of the age class are competing led to the following  McKendrick-von Foerster type {balance equations}  \cite{we3}:
\begin{align}\label{genpr}
\frac{\partial{\mathbf{n}(a,t)}}{\partial t}+\frac{\partial{\mathbf{n}(a,t)}}{\partial a} &=  -\mathbf{M}(\mathbf{n}(a,t),a,t)\mathbf{n}(a,t)+\mathbf{D}(a,t)\mathbf{n}(a,t)
\end{align}
in the domain
\begin{equation}\label{defB}
\B:=\{(a,t)\in \R{2}:0<a<B(t), \,\, t>0\}
\end{equation}
subject to the \textit{birth law}
\begin{align}\label{genbc}
\mathbf{n}(0,t)=\int_0^{\infty}\mathbf{m}(a,t)\mathbf{n}(a,t)\,da, \quad t> 0,
\end{align}
and the \textit{initial age distribution}
\begin{align}\label{genic}
\textbf{n}(a,0)=\textbf{f}(a), \quad a>0.
\end{align}
Here  $B(t)>0$ denotes the \textit{maximal length of life} of individuals in population at age $t\ge0$,
\begin{align*}
\mathbf{n}(a,t)&=(n_1(a,t), \ldots, n_N(a,t))^t,\\
\mathbf{f}(a,t)&=(f_1(a,t), \ldots, f_N(a,t))^t,\\
\mathbf{m}(a,t)&=\diag (m_1(a,t), \ldots, m_N(a,t)),\\
\mathbf{M}(n(a,t),a,t)&=\diag (M_1(n_1(a,t),a,t), \ldots, M_N(n_1(a,t),a,t)).
\end{align*}
where $M_k(v_k,a,t)$ is the mortality rate of the population patch $k$, and the the dispersion matrix  $\textbf{D}(a,t)=(D_{kj}(a,t))_{1\le k,j\le N}$ describes the migration rates between patches: the coefficients $D_{kj}(a,t)$ define a proportion of individuals of age $a$ at age $t$ on patch $j$ that migrates to patch $k$. Then
$$
\mathbf{P}(t)=\int_0^{B(t)}\mathbf{n}(a,t)\,da
$$
is the total population at time $t$.

The predecessor of the present model in the single patch case $N=1$ is the model proposed by von Foerster \cite{vonFoerster}; a detailed analysis was given by Gurtin and MacCamy \cite{Grt} and Chipot \cite{Chipot1}, \cite{Chipot2}. A comprehensive treatment of this approach is given by Iannelli \cite{Iannelli95}. Pr\"uss \cite{Pruss81}, \cite{Pruss83} was the first to study a mathematical model of an $N$-species population with
age-specific interactions in absence of migration. By using the theory of semilinear evolution equations he established the well-posedness and the existence of an equilibrium solution under certain constraints on the birth and death rates. He also derived some (local or asymptotic) stability results for for the equilibrium solutions.

When  $\mathbf{D}(a,t)\equiv0$, migration between patches is absent,  and the system \eqref{genpr} splits into $N$ independent balance equations. This model under an additional assumption that $\mathbf{M}(a,t)$ is the logistic regulatory function \eqref{logistic} has recently been studied in \cite{we2}. The case $\mathbf{D}(a,t)\not\equiv0$ is much more challenging. In modeling the source-sink dynamics, fundamentally important is the fact that individuals can disperse and move from one patch to another. Migration, which in the biological terms means a round-trip from a birthplace, is particularly significant. Then it is natural to expect that the global  and asymptotic behaviour of solutions to \eqref{genpr}--\eqref{genic} is determined by both the sign pattern and the weighted graph associated with $\textbf{D}(a,t)$.

\textbf{Outline}. A summary of the mathematical framework and our main results are presented in Section~\ref{sec:main}. In Section \ref{sec:prel} we discuss an auxiliary model and  derive some preliminary results on the corresponding lower and upper solutions.
In Section \ref{sec:general} we prove the existence and uniqueness of a solution to the balance equations  (\ref{genpr})--(\ref{genic}) by reducing the original problem to a certain nonlinear integral equation. In  Section~\ref{sec:const} we define the associated characteristic equation and the maximal solution, and establish one of the key results of the paper: the net reproductive rate dichotomy. The remaining part of the paper is dedicated to the study of the asymptotic behavior and stability of the solution. We consider three cases: a constant environment (i.e. the time-independent case) in Section~\ref{sec:const}, a periodic environment in Section \ref{sec:periodic} and an irregularly changing environment (i.e. the general time-dependent case) in Section \ref{sec:irreg}.

\paragraph{Notations.}
For easy reference we fix  some standard notation used throughout the paper. $\R{N}_+$  denotes the positive cone $\{x\in \mathbb{R}^N:x_i\ge0\}$. Given $x,y\in \mathbb{R}^N$ we  use the standard vector order relation: $x\le y$ if $x_i\le y_i$ for all $1\le i\le n$,
$x< y$  if $x\le y$ and $x\ne y$, and $x\ll y$ if $x_i< y_i$ for all $1\le i\le n$. Given $x\in \R{n}$,
$$
\renewcommand\arraystretch{1.5}
\| x\|_{p}=
\left\{
  \begin{array}{ll}
    (\sum_{k=1}^N |x_k|^p)^{1/p}, & \hbox{$1\le p<\infty$;} \\
    \max_{1\leq k\leq N}|x_k|, & \hbox{$p=\infty$.}
  \end{array}
\right.
$$
In particular, if $D=D_{jk}$ is an $N\times N$-matrix we define $\|D_{jk}\|_p$ for any $1\le p\le \infty$ in an obvious manner identifying $D$ with an element of $\R{N^2}$. Given $E\subset \mathbb{R}^N$ and a continuous function $h:E\to \mathbb{R}$, we define
$$
\|h\|_{C(E)}:=\sup_{x\in E}\|h_k(x)\|_\infty.
$$

\section{Main results}\label{sec:main}

\subsection{The structure conditions}
Before providing the main results, we give a brief summary of the structure conditions imposed on the balanced equations \eqref{genpr}--\eqref{genic}. We always assume that $\mathbf{m}(a,t)$ and $\mathbf{D}(a,t)$ are continuous\footnote{In fact, with some minor modifications, all the main results remains true under a weaker assumption that the structural coefficients are rather $L^{\infty}$-functions.} for $(a,t)\in \bar\B$ and  $\mathbf{M}(v,a,t)$ is a  continuous function of $(v,a,t)\in \R{}\times \bar\B$. Furthermore suppose the following structure conditions hold:
\begin{enumerate}[label=\textrm{(H\arabic*}),series=lafter]

\item\label{Hain1} there exists $0<b_1<b$ such that
$b_1\le B(t)\le b$ for all $t\ge0$ and
\begin{equation}\label{LipB}
\sup_{0<t_1<t_2<\infty}\frac{B(t_2)-B(t_1)}{t_2-t_1}<1
\end{equation}

\item\label{Hain2} for any fixed $(a,t)\in \B$, $M_k(v,a,t)$ is a nonnegative nondecreasing function of $v$ for $v\ge 0$, and there exist real numbers $\mu_\infty>0$, $\gamma>0$, and  a function $p(a)\ge \mu_\infty$  such that
\begin{equation}\label{hain2}
\begin{split}
M_k(v,a,t)-M_k(0,a,t)&\ge p(a) v^{\gamma}, \quad \forall (v,a,t)\in \R{}_+\times \B.
\end{split}
\end{equation}

\medskip

\item\label{Hain3} $\|\mathbf{D}\|_{C(\B)}<\infty$ and $\mathbf{D}(a,t)$ is a \textit{Metzler matrix}:
\begin{equation}\label{Metzler}
D_{kj}(a,t)\ge 0, \qquad k\ne j;
\end{equation}
\item
\label{Hain4}
$\|\mathbf{m}\|_{C(\B)}<\infty $ and there exist $0<a_m<A_m<b_1$ such that
$$\supp \mathbf{m}\subset[a_m,A_m]\times \R{+}.
$$

\item
\label{Hain5} the function $\mathbf{f}(a)$ is continuous and  $\supp \mathbf{f}\subset [0,B(0))$.
\end{enumerate}

Let us briefly explain the above conditions from the biological perspective. Concerning \ref{Hain1}, one usually uses a more restrictive condition that $B(t)$ is a constant. Nevertheless, \eqref{LipB} is  a more reasonable assumption: it means that the maximal length of life of individuals $B(t)$ in a population  may depend on $t$ but it grows not faster then the time. Mathematically, \eqref{LipB} asserts  that  the boundary curve $B(t)$ is transversal to the characteristics of \eqref{genpr}.

The monotonicity assumption in \ref{Hain2} ensures that increase in age-class density increases the death rate and has a negative effect on population growth. The classical example of the density independent mortality rate $M_k(v,a,t)=\mu_k(a,t)\ge\mu_\infty>0$ is compatible with $\gamma=0$ in \ref{Hain2}. Another  example is the logistic type model \cite{we2} with
\begin{equation}\label{logistic}
M_k(v,a,t)=\mu_k(a,t)\bigl(1+\frac{v}{L_k(a,t)}\bigr),
\end{equation}
where $L_k(a,t)\in L^\infty(\B)$ is the regulatory function (carrying capacity); this example fits \ref{Hain2} for $\gamma=1$.

Concerning the Metzler condition in \ref{Hain3}, note that the dispersion coefficient $D_{kj}(a,t)$ expresses the proportion of population $n_k(a,t)$ that from patch $j$ goes to patch $k$,  which naturally yields that $D_{kj}\ge0$. Furthermore, according the support condition in \ref{Hain4}, the improper integral in (\ref{genbc}) is well-defined  and actually  is taken over the finite interval $[a_m,A_m]$ which lies within the domain of definition of $\mathbf{n}(a,t)$ for any fixed $t>0$.  The condition \ref{Hain5} is  a natural assumption that the initial distribution of population is bounded by the life length.

\paragraph{The accessibility condition} For  further applications we shall also need an additional assumption on the structure of the dispersion matrix $\mathbf{D}$. In order to formulate it, let us recall some relevant concepts. Given a Metzler matrix $A\in \R{N\times N}$, one can associate a directed graph  $\Gamma(A)$ with nodes labeled by $\{1,2,\ldots,N\}$ where an arc leads from $i$ to $j$, $i\ne j$,  if and only if $A_{ij}> 0$. The patch
$j$ is said to be reachable from $i$, denoted $i\rightsquigarrow j$, if there exists a directed path from $i$ to $j$. A digraph is called
\textit{connected from vertex} $i$ if $i\rightsquigarrow j$ for all $j\ne i$  \cite[p.~132]{Balakrishnan}.

Then a patch $k$ is said to be \textit{accessible at age} $a\ge0$ if the associated digraph $\Gamma(\textbf{D}(a,t))$ is connected from $k$ for any $t>0$.

The accessibility condition relies on the sign pattern of the corresponding dispersion matrix and can be readily obtained by the standard tools of nonnegative matrix theory \cite[Section~3]{MincBook}.

Now, notice that by \ref{Hain4}  the following value is finite:
$$
\bar a_k=\inf_{t>0}\sup \{ a: m_k(a,t)>0 \}\le A_m<\infty.
$$
From the biological point of view, $\bar a_k$ is  the \textit{maximal fertility age} in population $k$. Our last condition reads as follows:

\begin{enumerate}[label=\textrm{(H\arabic*}),resume*=lafter]

\item \label{Hain6}
For any $1\le  k\le N$  there exists $ 0< \beta_k< \bar a_k$ such that the patch $k$ is accessible at age $\beta_k$.

\end{enumerate}

In other words, \ref{Hain6} asserts that for any patch $k$ there is a moment $\beta_k>0$  such that a (composite) migration from any other patch $j$ to $k$ is possible \textit{within the reproductive period}. Namely, some biological studies indicate that there are many different causes for dispersal, such as response to environmental conditions, prevention of inbreeding or competition for mates, see for instance \cite{BowlerB}. Thus, one can think of differences with respect to life-history traits, genetics and demography between dispersers and residents. When it comes to demography, more often than not, dispersing females are young individuals in their reproductive age, see, e.g., \cite{Gaines}, \cite{GreenwoodH}. Very  old individuals usually do not engage in breeding dispersal, which is the topic of our study.





\subsection{The Net Reproductive Rate Dichotomy}
Let us denote by $\rho(t)=\textbf{n}(0,t)$  the {newborn function}, i.e. a vector-function whose components denote the number of newborns on each patch. Then, the problem (\ref{genpr})--(\ref{genic}) can be reduced to the integral equation
\begin{align}\label{newb}
\rho(t)=\mathcal{K}\rho(t)+\mathcal{F}\mathbf{f}(t),
\end{align}
where $\mathcal{K}$ and $\mathcal{F}$ are positive nondecreasing operators with bounded ranges and $\mathcal{F}\mathbf{f}(t)=0$ for large $t>0$. Our strategy for proving permanency results is as follows: we first establish the permanency results for time-independent and time-periodic coefficients, and then show that in the general situation, a solution of (\ref{newb}) can be well-controlled by these cases.

If the environment is constant then the model parameters are time-independent functions. Then it is reasonable to assume that the maximal life-time is constant: $B(t)\equiv b$ \cite{Grt}, \cite{Chipot1}.
Our approach relies on a fine control of large-time behaviour  of an arbitrary solution to (\ref{newb}) by nontrivial solutions of the associated \textit{characteristic equation}
\begin{align}\label{char:const}
\rho=\Kop\rho,
\end{align}
where the operator $\Kop$ is  given by the right hand side in (\ref{genbc}) for a time-independent solution to (\ref{genpr})  with a constant boundary condition $\textbf{n}(0)=\rho$. Clearly, $\rho=0$ is a (trivial) solution of the characteristic equation.

Our  goal is to establish when a nontrivial positive solution $\rho\gg0$ exists. A crucial tool here is the so-called \textit{maximal solution} of the characteristic equation, i.e. a solution $\theta$ of \eqref{char:const} such that for an arbitrary solution $\rho$ there holds $\rho\le \theta$. In particular, $\theta=0$ implies that the characteristic equation has only trivial solutions. We establish the existence of the maximal solution in Section~\ref{sec:max}.

Another important ingredient is the \textit{net reproductive operator}
$$
\mathscr{R}_0\rho=\int_0^{\infty}\mathbf{m}(a)\mathbf{Y}(a;\rho)\,da,
$$
where $\mathbf{Y}(a;\rho)$ is the unique solution of the linearized initial problem
$$
\frac{d \mathbf{Y}(a;\rho)}{da}=(-\mathbf{M}(0,a)+\mathbf{D}(a))\mathbf{Y}(a;\rho),
\quad \mathbf{Y}(0;\rho)=\rho\in \R{N}_+.
$$
We show that  under conditions \ref{Hain1}-\ref{Hain6}, $\Rol:\R{N}_+\to \R{N}_+$ is a strongly positive operator. By Perron–Frobenius theorem, its spectral radius $\sigma(\Rol)$ is equal to the largest positive eigenvalue. We call this value {\it the net reproductive rate}.

To motivate the latter definition, observe that in the single-patch case, the net reproductive rate $R_0$ is given by
$$
R_0=\int_0^{\infty}m(a)e^{-\int_0^a\mu(v)dv}\,da.
$$
It it related to the solution of the Euler-Lotka characteristic equations in the linear age-structured population model; see \cite{Ianelli2014}.
According to \cite{we2}, $R_0$ is related to the solution $\rho^*$  of the characteristic equation in the nonlinear age-structured model.
Namely, if $R_0\le 1$, then $\rho^*=0$ and the population is going to extinction, while for $R_0>1$, we have $\rho^*>0$ and the population is permanent. The same is obviously valid if there are several patches without migration (i.e. $\textbf{D}\equiv 0$): every local subpopulation behaves accordingly to the value of $R_0$ on the respective patch.

The main contribution of this paper is the following dichotomy result on the long-term dynamics of populations.

\begin{thmy}[The Net Reproductive Rate Dichotomy]\label{thA}
If $\sigma(\mathscr{R}_{0})\leq 1$ then $\theta= 0$ and the characteristic equation \eqref{char:const} has no nontrivial solutions. If $\sigma(\mathscr{R}_{0})> 1$ then $\theta \gg 0$ and $\theta $ is the only nontrivial solution of the characteristic equation.

If $\chi(t)$ is an arbitrary solution of \eqref{newb} then
\begin{itemize}
\item if $\sigma({\mathscr{R}}_0)\le 1$ then $\chi(t)\rightarrow 0$ and $\mathbf{P}(t)\rightarrow 0$ as $t\rightarrow\infty$,

\item if  $\sigma({\mathscr{R}}_0)> 1$ then $\chi(t)\rightarrow \theta$ and $\mathbf{P}(t)\rightarrow \int_0^{\infty}\varphi(a;\theta)\,da$  as $t\rightarrow\infty$, where $\varphi(a;\theta)$ is the solution of the initial problem
    \begin{equation}\label{phis}
    \frac{d}{da}\varphi(a;\theta)= -\mathbf{M}(\varphi(a;\theta),a,t)\varphi(a;\theta)+ \mathbf{D}(a,t)\varphi(a;\theta), \quad \varphi(0;\theta)=\theta.
    \end{equation}
\end{itemize}
\end{thmy}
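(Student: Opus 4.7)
The plan is to work with the monotone nonlinear operator $\Kop:\R{N}_+\to\R{N}_+$ sending $\rho$ to $\int_0^{\infty}\mathbf{m}(a)\varphi(a;\rho)\,da$, where $\varphi$ solves \eqref{phis}. The crucial comparison is that by \ref{Hain2} one has $\mathbf{M}(v,a)\ge\mathbf{M}(0,a)$ for $v\ge 0$, and by \ref{Hain3} the matrix $\mathbf{D}$ is Metzler, so the cooperative-system comparison principle gives $\varphi(a;\rho)\le \mathbf{Y}(a;\rho)$ componentwise and hence $\Kop\rho\le \Rol\rho$ for all $\rho\ge 0$. The quantitative estimate $\mathbf{M}(v,a)-\mathbf{M}(0,a)\ge p(a)v^{\gamma}$ from \ref{Hain2}, combined with \ref{Hain6} (which propagates the nonlinear defect through the directed graph of $\mathbf{D}$ within the reproductive window), upgrades this to the strict inequality $\Kop\rho\ll \Rol\rho$ whenever $\rho>0$. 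Accessibility also implies that $\Rol$ is strongly positive, so Perron--Frobenius yields that the spectral radius $\sigma:=\sigma(\Rol)$ is a simple eigenvalue with a right eigenvector $\phi\gg 0$ and a left eigenvector $\psi\gg 0$.

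For the subcritical case $\sigma\le 1$, any fixed point $\rho$ of $\Kop$ satisfies $\rho=\Kop\rho\le\Rol\rho$, so $\rho\le \Rol^n\rho$ for all $n$. If $\sigma<1$ this forces $\rho=0$. For the borderline $\sigma=1$, pairing with $\psi$ yields $\langle\psi,\rho\rangle\le\langle\psi,\Rol\rho\rangle=\langle\psi,\rho\rangle$, so equality must hold at every step; since $\psi\gg 0$, this gives $\Kop\rho=\Rol\rho$, contradicting the strict inequality $\Kop\rho\ll \Rol\rho$ unless $\rho=0$. Hence $\theta=0$ in both cases.

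For the supercritical case $\sigma>1$, the Perron eigenvector supplies a sub-solution: expanding gives $\Kop(\epsilon\phi)=\epsilon\Rol\phi-O(\epsilon^{1+\gamma})=\epsilon\sigma\phi\bigl(1-O(\epsilon^{\gamma})\bigr)\gg\epsilon\phi$ for $\epsilon>0$ small. Iterating $\eta_{n+1}=\Kop\eta_n$ from $\eta_0=\epsilon\phi$ produces a monotone increasing sequence converging to a nontrivial fixed point $\eta\gg 0$. For the maximal solution $\theta$, condition \ref{Hain2} guarantees that $\Kop$ has uniformly bounded range (the super-linear mortality dominates the birth integral), so one can pick a constant $K$ with $\Kop(K\mathbf{1})\le K\mathbf{1}$ and iterate $\theta_0=K\mathbf{1}$, $\theta_{n+1}=\Kop\theta_n$; this sequence is monotone decreasing, converges to a fixed point $\theta$, and dominates every other fixed point because $\rho=\Kop^n\rho\le \Kop^n\theta_0$. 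Together with the sub-solution construction this yields $0<\eta\le\theta$, so $\theta\gg 0$. Uniqueness of the nontrivial fixed point comes from a strict sub-homogeneity argument: because $\mathbf{M}(v,a)$ is strictly increasing in $v$, the operator $\Kop$ is strictly concave along positive rays, which combined with strong positivity rules out two distinct positive fixed points.

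For the long-time behavior of solutions $\chi$ to \eqref{newb}, the plan is to sandwich $\chi$ between monotone iterates. Choose $K$ large enough that $\chi(t)\le K\mathbf{1}$ for all $t$; the sequence $\chi^{+}_0\equiv K\mathbf{1}$, $\chi^{+}_{n+1}(t)=\mathcal{K}\chi^{+}_n(t)+\mathcal{F}\mathbf{f}(t)$ is monotone decreasing with limit $\chi^{+}(t)\ge\chi(t)$, and because $\mathcal{F}\mathbf{f}$ vanishes for large $t$ while $\mathcal{K}$ is a delay operator with finite memory $A_m$, the value $\chi^{+}(t)$ converges as $t\to\infty$ to the correspondingly iterated fixed point of $\Kop$, which is $\theta$. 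A symmetric lower iteration starting from $\chi^{-}_0\equiv 0$ in the subcritical case gives $\chi^{-}(t)\equiv 0$, and in the supercritical case starting from $\epsilon\phi$ on $[T_0,\infty)$ for $T_0$ large and $\epsilon$ small yields $\chi^{-}(t)\to\theta$. The claim about $\mathbf{P}(t)$ then follows by writing $\mathbf{n}(a,t)=\varphi(a;\rho(t-a))$ along characteristics for $t>b$ and invoking continuous dependence of $\varphi$ on its initial data, so $\rho(t)\to\theta$ gives $\mathbf{n}(\cdot,t)\to\varphi(\cdot;\theta)$ uniformly on $[0,b]$. The hardest part will be the borderline case $\sigma(\Rol)=1$ and the uniqueness step in the supercritical case: both rest on extracting the quantitative strict gap $\Kop\rho\ll\Rol\rho$ and propagating it through the accessibility graph encoded by \ref{Hain6}, which requires the fine structure of the linearized semigroup rather than just monotonicity.
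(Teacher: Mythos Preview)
Your treatment of the characteristic equation itself---the fixed-point dichotomy for $\Kop$---is essentially the paper's argument. The paper also uses the strict domination $\Kop\rho\ll\Rol\rho$ for $\rho>0$ (via the scaled operators $\mathscr{R}_\lambda$), the Perron eigenvector to produce a strict sub-solution when $\sigma>1$, iteration down from a large constant to capture $\theta$, and a Krasnoselski\u{\i}--Zabre\u{\i}ko sub-homogeneity step for uniqueness. Your pairing with the left Perron vector at $\sigma=1$ is a minor variant of the same idea.

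The genuine gap is in the asymptotic convergence when $\sigma>1$, specifically your lower sandwich. Defining $\chi^-(t)=0$ for $t<T_0$ and $\chi^-(t)=\epsilon\phi$ for $t\ge T_0$ is \emph{not} a lower solution of $\rho=\mathcal{K}\rho+\mathcal{F}\mathbf{f}$: for $t\in[T_0,T_0+a_m)$ the convolution $\mathcal{K}\chi^-(t)=\int_{a_m}^{A_m}\mathbf{m}(a)\Phi(a;\chi^-,t-a)\,da$ only sees values $\chi^-(t-a)$ with $t-a<T_0$, which are zero, so $\mathcal{K}\chi^-(t)=0$; and $\mathcal{F}\mathbf{f}(t)=0$ once $t>A_m$. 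Hence $\mathscr{L}_{\mathbf{f}}\chi^-(t)=0<\epsilon\phi=\chi^-(t)$ on that interval, and the inequality goes the wrong way. The delay structure of $\mathcal{K}$ means you cannot simply ``switch on'' a positive constant at a late time.

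What the paper actually does here is substantially more work: starting from the iterates $\rho^{(j)}=\mathscr{L}_{\mathbf{f}}^{\,j}0$ (seeded by the nonzero $\mathcal{F}\mathbf{f}$), it proves a convolution-type propagation lemma showing that positivity of $\rho^{(j)}$ on an interval forces $(\mathcal{K}\rho^{(j)})_k>0$ on a shifted, longer interval, with the shift controlled by $\supp m_k$ and the accessibility age $\beta_k$ from \ref{Hain6}. Iterating this, one finds $j$ and $M$ with $\rho^{(j)}(t)>0$ on $[M-A_m,M+A_m]$; only then does patching $\rho^{(j)}$ on $[0,M]$ with $\lambda\theta$ on $(M,\infty)$ (for $\lambda$ small) produce a genuine lower solution, because on the transition window $[M,M+A_m]$ the convolution now sees strictly positive input. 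This is where \ref{Hain6} is really used for the dynamics, not just for the linearized operator, and it is the step your sketch is missing.
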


Thus, the net reproductive rate $\sigma({\mathscr{R}}_0)$ effectively determines large time behavior of population on $N$ patches in a constant environment. Here, as in the single-patch case, $\sigma({\mathscr{R}}_0)\le 1$ implies extinction of a population on all patches, while $\sigma({\mathscr{R}}_0)> 1$ grants the global permanency of a population. We see that the dichotomy result for a multi-patch population is completely consistent with the single-patch case when the net reproductive operator $\Rol$  coincides with the multiplication by $R_0$ (thus implying $R_0=\sigma({\mathscr{R}}_0)$).

It is also important to emphasize that the function $\varphi(a;\theta)$ in \eqref{phis} is exactly the unique equilibrium point of the problem \eqref{genpr}, \eqref{genbc} provided that $\theta$ satisfies the characteristic equation. In other words, Theorem~\ref{thA} implies the global stability result: any solution of the principal model converges at infinity to the unique equilibrium point given by the characteristic equation.

The proof of  Theorem~\ref{thA}, along with certain related results, occupies Section~\ref{sec:const} and make an essential use of the auxiliary monotonicity results collected in Section~\ref{sec:prel} and functional theoretic properties of the integral equation \eqref{newb} given in Section~\ref{sec:general}. Our approach relies on the following steps and can be described as follows. First we associate to an arbitrary solution $\chi$ of (\ref{newb}) certain lower and upper monotone sequences. The existence of an upper sequence relies on the boundedness of the image of $\mathcal{K}$. The construction of a lower sequence is  more tricky and involves certain fine properties of the maximal solution and some previous auxiliary monotonicity results accompanying by the accessibility condition \ref{Hain6}. The main problem here is to control a nonzero asymptotic behaviour of the lower approximants as $t\to \infty$. Next, we show that the large-time behaviour of $\chi$ can be well controlled by the limits at infinity constructed monotone approximants. Furthermore, we are able to identify the common limits as the maximal solution $\theta$. This finally establishes that the constructed sequences converge to the equilibrium point of the original problem. Notice that the monotonicity of the lower and upper approximations is crucial because the convergence established in the first steps is valid only on any bounded interval.

\subsection{Two-side estimates of $\sigma({\mathscr{R}}_0)$ and $\theta$}
A life-history trade-off between reproduction and migration has been noted for many species, including migratory birds and some insects (see for example \cite{MZ}, \cite{CASW}, \cite{PAG}). This trade-off is caused by energy constraints because both reproduction and migration are energetically costly for organisms. Keeping the assumption that the environment is constant and using the specific form of the balance system, we investigate the consequences of this trade-off.

The fact that individuals do not reproduce during migration is biologically justified and mathematically it is stated as:
\begin{align}\label{cond:D}
\sum_{k=1}^ND_{kj}(a)\le 0, \quad 1\le j\le N.
\end{align}
The relation between dispersion coefficients means that some migrants that are leaving patch $j$ will eventually die before reaching patch $k$, but they will not give birth during migration.
Then, we establish in Section \ref{sec:est} the following two-side estimates for the net reproductive rate.

\begin{thmy}
Under additional assumption that  \eqref{cond:D} holds we have
\begin{align*}
\max_{1\le k\le N}\int_0^{\infty}m_k(a)e^{-\int_0^a(\mu_k(v)+|D_{kk}(v)|)dv}\,da &\le \sigma(\Rol)\le  \int_0^{\infty}m(a)e^{-\int_0^a\mu(v)dv}\,da,
\end{align*}
where $m(a)$ is the maximal birth rate and $\mu(a)$ is the minimal death rate on all patches.
\end{thmy}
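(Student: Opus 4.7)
The plan is to exploit the Perron--Frobenius structure of $\Rol$, viewed as a nonnegative $N\times N$ matrix on $\R{N}$, together with the sign restriction \eqref{cond:D}. As observed earlier, $\Rol$ is strongly positive, so $\sigma(\Rol)$ is attained as the eigenvalue on a Perron eigenvector $\rho\gg 0$, and the componentwise positivity $Y(a;\rho)\ge 0$ for $\rho\ge 0$ follows from the Metzler structure of $-\mathbf{M}(0,a)+\mathbf{D}(a)$.

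\textbf{Upper bound.} I fix a Perron eigenvector $\rho\gg 0$ and study the scalar total $S(a):=\sum_{k=1}^N Y_k(a;\rho)$. Writing $\mu_k(a):=M_k(0,a)$ and summing the defining ODE for $\mathbf{Y}(\cdot;\rho)$ componentwise gives
\[
\frac{dS}{da}=-\sum_k \mu_k(a) Y_k(a;\rho)+\sum_j Y_j(a;\rho)\sum_k D_{kj}(a)\le -\mu(a) S(a),
\]
since the second sum is nonpositive by \eqref{cond:D} (using $Y_j\ge 0$), and $\mu_k\ge \mu$. Gr\"onwall's inequality then yields $S(a)\le S(0) e^{-\int_0^a \mu(v) dv}$. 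Summing the eigenvalue relation $\Rol\rho=\sigma(\Rol)\rho$ componentwise and bounding $m_k\le m$ gives
\[
\sigma(\Rol)\sum_k \rho_k=\sum_k\int_0^\infty m_k(a) Y_k(a;\rho) da\le \Bigl(\sum_k \rho_k\Bigr)\int_0^\infty m(a) e^{-\int_0^a \mu(v) dv} da,
\]
which is the desired upper bound after dividing by $\sum_k \rho_k>0$.

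\textbf{Lower bound.} First, \eqref{cond:D} forces $D_{kk}(a)\le -\sum_{j\ne k} D_{jk}(a)\le 0$ by Metzler nonnegativity, so $|D_{kk}|=-D_{kk}$. Feeding the basis vector $\rho=e_k$ into the ODE and retaining only the $k$-th component,
\[
\frac{d Y_k(a;e_k)}{da}=(-\mu_k(a)+D_{kk}(a)) Y_k(a;e_k)+\sum_{j\ne k} D_{kj}(a) Y_j(a;e_k)\ge -(\mu_k(a)+|D_{kk}(a)|) Y_k(a;e_k),
\]
since the dropped sum is nonnegative. Gr\"onwall gives $Y_k(a;e_k)\ge \exp(-\int_0^a(\mu_k+|D_{kk}|) dv)$, so
\[
(\Rol e_k)_k\ge \int_0^\infty m_k(a) e^{-\int_0^a(\mu_k(v)+|D_{kk}(v)|) dv} da.
\]
The lower bound then follows from the standard fact that every nonnegative matrix $R$ satisfies $\sigma(R)\ge \max_k R_{kk}$, which is an immediate consequence of $(R^n)_{kk}\ge R_{kk}^n$ together with Gelfand's formula, after taking the maximum over $k$.

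The arguments are elementary after each reduction; the only subtle input is the componentwise positivity of solutions to linear Metzler ODEs, which is a classical comparison fact (and is presumably among the auxiliary monotonicity results of Section~\ref{sec:prel}), together with the careful extraction of $D_{kk}\le 0$ from the constraint \eqref{cond:D}.
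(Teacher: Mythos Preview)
Your proof is correct and follows essentially the same approach as the paper: the upper bound via summing components and using \eqref{cond:D} together with Gr\"onwall is identical, and the lower bound uses the same componentwise Gr\"onwall estimate $Y_k'\ge -(\mu_k+|D_{kk}|)Y_k$. The only minor difference is that the paper applies the lower estimate to the Perron eigenvector $\rho\gg 0$ and then divides the relation $\sigma(\Rol)\rho_k=(\Rol\rho)_k\ge \rho_k\int_0^\infty m_k(a)e^{-\int_0^a(\mu_k+|D_{kk}|)}\,da$ by $\rho_k$, whereas you apply it to $e_k$ to bound the diagonal entry $(\Rol)_{kk}$ and then invoke $\sigma(\Rol)\ge \max_k(\Rol)_{kk}$.
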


In addition, in Proposition~\ref{pro:upper} below we establish a priori estimates for the net reproductive rate and for the maximal solution $\theta$.

\subsection{Periodically and irregularly changed environment}
Natural habitats are usually positively autocorrelated, see for example \cite{Steele}. Therefore, the assumption that the vital rates, regulating function and dispersal coefficients are changing periodically with respect to time is a reasonable approximation. In the study of the large-time behavior of a solution to equation (\ref{newb}) in a periodically changing environment, the pivotal role belongs to the characteristic equation
\begin{align*}
\rho(t)=\Kopp\rho(t),
\end{align*}
where the operator $\Kopp$ is given by the right hand side of (\ref{genbc}) and $\mathbf{n}(a,t)$ solves (\ref{genpr}) with a periodic boundary condition $\mathbf{n}(0,t)=\rho(t)$, $\rho(t+T)=\rho(t)$. We establish in Section \ref{sec:periodic} that the operator $\Kopp$ is absolutely continuous which allows us to extend the methods of Section~\ref{sec:const} to the periodic case. In particular, the corresponding net reproductive operator $\Rlo$ defined on space of periodic continuous functions  is strictly positive and its spectral radius $\sigma(\Rlo)$ is equal to the largest eigenvalue. We are also able to establish the corresponding dichotomy result for a periodic environment.

If the environment is changing irregularly, the structure parameters the principal model (\ref{genpr})--(\ref{genic}) can be estimated from above and below by nonnegative periodic functions. Using these periodic functions as a structure parameters for new models, we formulate two associated periodic problems. One of them is the best-case scenario and its solution is an upper bound for the original problem. The other is the worst-case scenario and its solution is a lower bound. In other words, a solution for the general time-dependent problem can be bounded for large values of $t$ by above and below by the solution to the associated periodic problems, as stated in Theorem \ref{main_gen}.

\subsection{Source-sink dynamics}

Using the source-sink dynamics it is possible to explain permanency of a population on several patches provided that at least one patch is a source and that all patches are connected by dispersion. In Section \ref{sec:sink-source} we assume that the environment is constant and consists of several patches. Then it is possible to show that survival of population on both patches is possible provided that emigration from the source is sufficiently small.

Furthermore, in Section \ref{sink-only}, we show that permanency is possible even if all patches are sinks provided that dispersion is appropriately chosen. This is especially important for migratory birds, since both of their habitats can be seen as sinks (one because of the low reproduction due to insufficient resources, and the other because of the high mortality in the winter). This example can be related to the results in \cite{JanY}, where a simple model is used for analysis of connection between population permanency and allocation of offspring in a population that lives on several patches. One of the results is that permanency is possible even if all patches are sinks.


\section{An auxiliary model}\label{sec:prel}

\subsection{Upper and lower solutions}
Below we establish some auxiliary monotonicity results for lower and upper solutions to a general system of ordinary differential equations
\begin{equation}\label{system1}
\Lop w:=\frac{d}{dx}w(x)-\mathbf{F}(w(x),x)=0, \qquad x\in [0,b),
\end{equation}
where  $\mathbf{F}(w,x):\R{N}\times [0,b)\to \R{N}$ is a locally Lipschitz function in $w\in\R{N}$ for any $x\in [0,b)$ satisfying the \textit{Kamke-M\"uller condition}, i.e. that the Jacobian matrix $DF(w,x)$ is a Metzler matrix, i.e.
\begin{equation}\label{metzler1}
\frac{\partial F_i(w,x)}{\partial w_j}\ge 0 \quad i\ne j
\end{equation}
for almost all $w\in \R{N}$ and all $x\in [0,b)$. We assume additionally  that $\mathbf{F}$ satisfies
\begin{equation}\label{fequal0}
\mathbf{F}(0,x)= 0 \text{\,\,\, for any } x\in [0,b).
\end{equation}
In particular, this implies that $w(x)\equiv 0$ is a solution of (\ref{system1}).

We shall also exploit  a weaker version of the concept of irreducibility. More precisely, let $\textbf{F}(w,x)=(F_1(w,x),\ldots, F_N(w,x) )$ be continuously differentiable with respect to $w$ and let
$
D\textbf{F}(w,x):=(\frac{\partial F_k(w,x)}{\partial w_j})
$
denote the corresponding Jacobi matrix. Then an index $k\in \{1,2,\ldots, N\}$ is said to be $\textbf{F}$-\textit{accessible}  at  $x\in [0,b)$ if the associated digraph $\Gamma(D\textbf{F}(w,x))$ is connected from $k$ for any $w$.

In this paper, we are mostly interested in the  particular case when
\begin{align}\label{leftshort}
\textbf{F}(w,x)=-\textbf{M}({w},x){w}+ \textbf{D}(x){w}, \quad x\in [0,b), w\in \R{N}.
\end{align}
then  $D\textbf{F}(w,x)=\textbf{D}(x)$ and   a patch $k\in \{1,2,\ldots, N\}$ is accessible  at age $x$ if  $\Gamma(\textbf{D}(x))$ is connected from $k$. Note also  that if $\mathbf{F}$ is defined  by (\ref{leftshort}) then  (\ref{metzler1}) is equivalent to that $D\mathbf{F}(w,x)=\mathbf{D}(x)$ is a Metzler matrix. In this case the condition (\ref{fequal0}) is trivially satisfied.

\begin{definition}
A locally Lipschitz function $w(x)$ is called an \textit{upper} (resp. \textit{lower}) solution to (\ref{system1}) if $\frac{d}{dx}w(x)\ge \mathbf{F}(w(x),x)$ (resp. $\frac{d}{dx}w(x)\le \mathbf{F}(w(x),x)$) holds for all $x\in[0,b)$.
\end{definition}

The next lemmas generalize the corresponding facts for the cooperative system (cf. \cite[Remark~1.2]{Hal}) on lower (upper) solutions of (\ref{system1})  with Lipschitzian $\mathbf{F}$.  Notice also that our proofs are somewhat different from those given in \cite{Hal}. Let us agree to write
$$
v\ge_{k} u \quad \Leftrightarrow \quad
v\ge u \text{ and }v_k=u_k \text{ for some }1\le k\le N.
$$
First notice that $\mathbf{F}$ satisfies the so-called  quasimonotone condition \cite{HirschSmith}, \cite{Hal}.

\begin{lemma}\label{lem:metzler}
If $\mathbf{F}$ satisfies the Kamke-M\"uller  condition then $u\le_{k} v$ implies $F_k(u,x)\le F_k(v,x)$ for any $x\in [0,b)$.
\end{lemma}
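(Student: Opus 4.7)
The heart of the argument is the following observation: moving from $u$ to $v$ increases only the coordinates $w_j$ with $j\ne k$ (since $u_k=v_k$), and the Kamke-Müller condition asserts that $F_k$ is nondecreasing in each such $w_j$ with the other variables held fixed. My plan is to make this precise by a fundamental-theorem-of-calculus computation along the straight segment $\gamma(t)=u+t(v-u)$, $t\in[0,1]$.

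The main obstacle is that $\mathbf{F}$ is only locally Lipschitz in $w$, so Rademacher's theorem guarantees that $D\mathbf{F}$ exists almost everywhere in $\R{N}$ but not necessarily almost everywhere on the one-dimensional segment $\gamma$; the naive chain rule along $\gamma$ is therefore not directly available, and the a.e.\ inequality (\ref{metzler1}) cannot be read off along a fixed line either. I would sidestep both issues by a mollification in the $w$-variable. Let $\rho_\epsilon$ be a standard nonnegative mollifier on $\R{N}$ and set $\mathbf{F}_\epsilon(w,x):=(\mathbf{F}(\cdot,x)*\rho_\epsilon)(w)$. Then $\mathbf{F}_\epsilon(\cdot,x)$ is smooth, converges locally uniformly to $\mathbf{F}(\cdot,x)$ as $\epsilon\to 0^+$, and for $i\ne j$,
\[\frac{\partial F_{\epsilon,i}}{\partial w_j}(w,x)=\Bigl(\frac{\partial F_i}{\partial w_j}(\cdot,x)*\rho_\epsilon\Bigr)(w)\ge 0\]
pointwise, since a nonnegative a.e.\ function convolved with a nonnegative kernel is nonnegative everywhere. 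Thus the mollified field is itself a smooth cooperative vector field.

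For the smooth approximant the chain rule yields, along $\gamma$,
\[F_{\epsilon,k}(v,x)-F_{\epsilon,k}(u,x)=\int_0^1\sum_{j=1}^N \frac{\partial F_{\epsilon,k}}{\partial w_j}(\gamma(t),x)\,(v_j-u_j)\,dt.\]
The $j=k$ term vanishes because $v_k-u_k=0$, while for every $j\ne k$ the factor $v_j-u_j\ge 0$ multiplies the nonnegative factor $\partial F_{\epsilon,k}/\partial w_j\ge 0$. Consequently the integrand is pointwise nonnegative, so $F_{\epsilon,k}(u,x)\le F_{\epsilon,k}(v,x)$. Sending $\epsilon\to 0^+$ and invoking the locally uniform convergence $F_{\epsilon,k}\to F_k$ delivers $F_k(u,x)\le F_k(v,x)$, as claimed. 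Once mollification is in place, the whole argument collapses to the elementary coordinate-monotonicity computation for a smooth cooperative field; the only genuine technicality is the preservation of the off-diagonal sign under convolution, which is immediate.
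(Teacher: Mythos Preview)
Your argument is correct and follows the same route as the paper---integrate along the segment $u+t(v-u)$, drop the $j=k$ term since $v_k-u_k=0$, and use nonnegativity of the off-diagonal partials---but the paper simply writes the chain-rule identity $F_k(v,x)-F_k(u,x)=\int_0^1\sum_i\partial_{w_i}F_k(u+t(v-u),x)(v_i-u_i)\,dt$ directly for the locally Lipschitz $\mathbf{F}$, without the mollification step you introduce. Your extra care addresses precisely the regularity issue you flag (Rademacher gives a.e.\ differentiability in $\R{N}$, not along a given line), so your version is in fact more rigorous than the paper's own proof.
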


\begin{proof}
Indeed, the function $g(t)=\mathbf{F}(u+t(v-u),x)$ is absolutely continuous in $[0,1]$, hence applying by the fundamental theorem of calculus and (\ref{metzler1}) that
\begin{equation}
\begin{split}
F_k(v,x)-F_k(u,x)&=\int_{0}^1g'_k(t)dt\label{fund}\\
&=\int_{0}^1 \sum_{i=1}^N\frac{\partial F_k(u+t(v-u),x)}{\partial w_i}(v_i-u_i) dt\\
&=\sum_{i=1,i\ne k}^N(v_i-u_i)\int_{0}^1\frac{\partial F_k(u+t(v-u),x)}{\partial w_i} dt\ge0,\\
\end{split}
\end{equation}
as desired.
\end{proof}

\begin{lemma}\label{lemma2}
Let $ w(x)$  be an upper solution of $(\ref{system1})$ a.e. in $[0,b)$ such that $w(0) \geq 0$. Then $w(x)\geq 0$ on $[0,b)$. Furthermore, if $w_j(0)>0$ then $w_j(x) > 0$ for  $x\in [0,b)$.
\end{lemma}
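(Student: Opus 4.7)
The plan has two stages. First I would establish the weak nonnegativity $w(x)\ge 0$ by a perturbation-plus-first-contact argument exploiting the Kamke--M\"uller structure of $\mathbf{F}$, and then promote this to the coordinate-wise strict positivity $w_j(x)>0$ via a Gr\"onwall estimate derived from the Metzler property of $D\mathbf{F}$.

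For the first stage, fix $b'\in(0,b)$ and let $L_F$ be a Lipschitz constant for $\mathbf{F}(\cdot,x)$ valid on a ball containing the image $w([0,b'])$ uniformly in $x\in[0,b']$. Choose $L>L_F$ and set $w^\epsilon(x):=w(x)+\epsilon e^{Lx}\vone$ with $\epsilon>0$ small. Then $w^\epsilon(0)\gg 0$, and using the upper-solution inequality $w'\ge\mathbf{F}(w,x)$ together with $\|\mathbf{F}(w^\epsilon,x)-\mathbf{F}(w,x)\|_\infty\le L_F\epsilon e^{Lx}$, one gets a.e.
\begin{equation*}
\frac{d}{dx}w^\epsilon(x)\ge \mathbf{F}(w^\epsilon(x),x)+\epsilon(L-L_F)e^{Lx}\vone.
\end{equation*}
If $w^\epsilon$ were not strictly positive on all of $[0,b']$, pick the smallest $x_0\in(0,b']$ at which some component $w^\epsilon_k(x_0)=0$. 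By continuity $w^\epsilon(x_0)\ge_k 0$, so Lemma~\ref{lem:metzler} combined with \eqref{fequal0} yields $F_k(w^\epsilon(x_0),x_0)\ge F_k(0,x_0)=0$. Plugging this into the displayed inequality, $(w^\epsilon_k)'(x)>0$ a.e.\ in some left neighborhood of $x_0$, which after integration gives $w^\epsilon_k(x_0)>w^\epsilon_k(x_0-h)$ for small $h>0$; but $w^\epsilon_k(x_0-h)>0=w^\epsilon_k(x_0)$, a contradiction. Hence $w^\epsilon\gg 0$ on $[0,b']$, and letting $\epsilon\to 0$ and then $b'\to b$ yields $w(x)\ge 0$ on $[0,b)$.

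For the second stage, assuming $w_j(0)>0$ and $w(x)\ge 0$ on $[0,b)$, I would apply the fundamental theorem of calculus together with $\mathbf{F}(0,x)=0$ to get
\begin{equation*}
F_j(w(x),x)=\int_0^1\sum_{i=1}^N\frac{\partial F_j(tw(x),x)}{\partial w_i}w_i(x)\,dt\ge \beta_j(x)\,w_j(x),
\end{equation*}
where $\beta_j(x):=\int_0^1\frac{\partial F_j(tw(x),x)}{\partial w_j}\,dt$ and the inequality uses the Metzler condition \eqref{metzler1} for the cross terms $i\ne j$ together with $w_i(x)\ge 0$. Local Lipschitz continuity of $\mathbf{F}$ makes $\beta_j$ bounded on every compact subinterval of $[0,b)$, so that the differential inequality $w_j'(x)\ge\beta_j(x)w_j(x)$ and Gr\"onwall's lemma give
\begin{equation*}
w_j(x)\ge w_j(0)\exp\Bigl(\int_0^x\beta_j(s)\,ds\Bigr)>0\qquad\text{for all }x\in[0,b).
\end{equation*}

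The main obstacle is the first-contact step in the first stage: since $w$ is only absolutely continuous and $\mathbf{F}$ is merely locally Lipschitz in $w$, the differential inequalities hold a.e.\ rather than pointwise, and the ``$(w^\epsilon_k)'>0$ forces $w^\epsilon_k$ to increase'' assertion must be read in the integrated sense rather than pointwise. Restricting to compact subintervals $[0,b']$ provides the uniform Lipschitz bound on $\mathbf{F}$ needed to choose $L>L_F$ independently of $x$, and this uniformization is the only non-routine ingredient.
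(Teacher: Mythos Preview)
Your argument is correct and the two stages are sound, but your route for the nonnegativity part is genuinely different from the paper's.

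For the first stage you run a classical $\epsilon$-perturbation plus first-contact argument: inflate $w$ to $w^\epsilon=w+\epsilon e^{Lx}\vone$, show $w^\epsilon\gg0$ by contradiction at the first touching time, then let $\epsilon\to0$. The paper instead works with the negative part $w^-(x)=(\min(0,w_1(x)),\ldots,\min(0,w_N(x)))$: it checks (via Lemma~\ref{lem:metzler}) that $w^-$ is again an upper solution, observes that $w^-(0)=0$ and $w^-\le 0$, and then proves by a Gr\"onwall estimate on $\|w^-\|_1$ that any nonpositive upper solution starting at $0$ vanishes identically. This avoids the perturbation parameter and the limiting step altogether. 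Your approach is the textbook comparison-theorem device (cf.\ Smith, \emph{Monotone Dynamical Systems}); the paper's negative-part reduction is more self-contained here and sidesteps the one delicate point in your sketch, namely justifying ``$(w^\epsilon_k)'>0$ a.e.\ in a left neighborhood of $x_0$'' when $\mathbf{F}$ is only assumed locally Lipschitz in $w$ (you implicitly use continuity of $x\mapsto F_k(w^\epsilon(x),x)$, which holds in the paper's concrete setting but is not part of the abstract hypothesis; the gap is easily closed by combining Lemma~\ref{lem:metzler} with the Lipschitz bound to get $F_k(w^\epsilon(x),x)\ge -L_F\,w^\epsilon_k(x)$ on $[0,x_0)$ and then integrating).

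For the second stage the two proofs coincide in substance: both derive a scalar differential inequality $w_j'\ge c(x)\,w_j$ from the Metzler structure and conclude by Gr\"onwall. You obtain $c(x)=\beta_j(x)$ via the integral representation of $F_j$; the paper obtains $c(x)=-C(r)$ by first using $w(x)\ge_j w_j(x)e_j$ and Lemma~\ref{lem:metzler} to reduce to $F_j(w_j(x)e_j,x)$, then invoking the Lipschitz bound. The endpoint is the same.
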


\begin{proof}
First we claim that $w(x)^-:=(w_1^-(x),...,w_N^-(x))$ is also an upper solution of (\ref{system1}) a.e. in $[0,b)$, where $w_k^-(x)=\min(0,w_k(x))$. Indeed, since each $w^-_k(x)$ is a locally Lipschitz function, there exists a full Lebesgue measure subset  $E\subset(0,b)$  where all $w^-_k(x)$ are differentiable. We will show that $w^-$ satisfy $(w^-)'(x)\ge \mathbf{F}(w^-(x),x)$ on $E$. Let $x_0\in E$ and $1\le k\le N$. If $w_k(x_0)\ge 0$ for some $k$ then $w_k^-(x_0)=0$, hence $x_0$ is a local maximum of $w_k^-(x)$ (because $w_k^-(x)\le 0$ everywhere). This yields $(w_k^-)'(x_0)=0$. Furthermore, since $0\ge_{k} w_-(x_0)$, we have by Lemma~\ref{lem:metzler} and (\ref{fequal0}) that
$$
(w_k^-)'(x_0)=0=F_k(0,x_0)\ge F_k(w^-(x_0),x_0).
$$
If $w_k(x_0)< 0$  then by the continuity of $w_k(x)$ one has $w_k^-(x)=w_k(x)$, $(w_k^-)'(x)=w_k'(x)$ in some neighbourhood of $x_0$. Thus, applying (\ref{system1}) we have by $w(x)\ge_{k}w^-(x)$ and Lemma~\ref{lem:metzler} that
\begin{align*}
(w_k^-)'(x)=w_k'(x)
\ge F_k(w(x),x)
\ge  F_k(w^-(x),x)
\end{align*}
holds everywhere in the neighbourhood of $x_0$. Thus, the claim is proved.

We also claim is that any upper solution to (\ref{system1}) with $w(0)= 0$ and $w(x)\le0$ for $x\in [0,b)$ is identically zero in the interval. Indeed, if $w$ is such a  function then let $T$ be chosen as the supremum of all $t\in[0,b)$ such that $w(x)=0$ in $[0,t]$. If $T=b$ the claim is proved. Therefore assume that $T<b$. Then by the continuity $w(T)=0$ and for any $\epsilon>0$ there exists $x\in [T,T+\epsilon]$ such that $w(x)<0$, and thus $\|w(x)\|_1>0$. Since $\mathbf{F}(w,x)$ is locally Lipschitz in $w$, there exist $M>0$ and $\epsilon>0$ such that $\|\mathbf{F}(w,x)-\mathbf{F}(0,x)\|_{1}\le M\|w\|_{1}$ for any $\|w\|_1<\epsilon$ and any $x\in [0,b)$. Define $h(x)=\|w(x)\|_1\equiv -\sum_{i=1}^N w_i(x)$ (recall that by the assumption $w_i(x)\le0$ for all $i$ and $x\in [0,b)$). By the continuity of $w(x)$, there exists $\delta$ such that $\|w(x)\|_1<\epsilon$ for any $|x-T|<\delta$. Let the set $E$  be defined as above and $x\in [T,T+\delta)$. Since by (\ref{fequal0}) $\mathbf{F}(0,x)=0$, we have
$$
h'(x)=-\sum_{i=1}^N w'_i(x)\le -\sum_{i=1}^N F_i(w(x),x)\le M\|w(x)\|_1=Mh(x).
$$
The latter inequality yields $(h(x)e^{-Mx})'\le0$ a.e. in $[T,T+\delta]$. Since $h(x)$ is locally Lipschitz it is absolutely continuous, thus $h(x)e^{-C(a)x}\le h(T)=0$  in $[T,T+\delta]$, i.e. $\|w(x)\|_1\equiv 0$ in the interval, a contradiction with the choice of $T$. This yields the  claim.

Now, if $w(x)$ is an upper solution to (\ref{system1}) with $w(x)\ge0$ then by the first claim $w^-(x)$ is an upper solution solution  with $w^{-}(0)=0$. Then the second claim implies $w^-(x)\equiv 0$ in $[0,b)$, thus we have $w(x)\ge0$ in $[0,b)$.

To finish the proof, let us suppose that $w_j(0)>0$. Since $F_j(y,x)$ is locally Lipschitz in $y$, for any $r>0$ there exists $C(r)$ such that (in virtue of (\ref{fequal0})) $|F_j(y,x)|\le C(r)\|y\|_1$ for all $y\in \R{N}$ and  $\|y\|\le r$. Let $0<\beta<b$ be chosen arbitrarily and let $r=\sup_{x\in [0,\beta]}|w_j(x)|$. Since $w(x)\ge_j w_j(x)e_j$, where $e_j$ is the $j$th coordinate vector, Lemma~\ref{lem:metzler} and the nonnegativity of $w_j(x)$ yield that
$$
\frac{d}{dx}w_j(x)\ge F_j(w(x),x)\ge F_j(w_j(x)e_j,x)\ge -C(r)w_j(x), \qquad x\in [0,\beta].
$$
The latter yields $w_j(x)e^{C(r)x} \ge w_j(0)>0$, thus $w_j(x)>0$ for every $x\in [0,\beta]$, and therefore in the whole interval $[0,b)$.
\end{proof}

\begin{lemma}\label{lemma20}
Let  $ w(x)$  be an upper solution of $(\ref{system1})$ with $w(0)> 0$ and such that the $k$-th patch is $\mathbf{F}$-accessible at some $\beta\in [0,b)$ then  $w_k(x) > 0$ on $(\beta,b)$.
\end{lemma}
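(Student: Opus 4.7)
My plan is to propagate strict positivity from a \emph{seed} component that is positive throughout $[0,b)$ back to the index $k$, following a directed path in $\Gamma(D\mathbf{F}(w(\beta),\beta))$ provided by accessibility. First I would apply Lemma~\ref{lemma2} to conclude $w\ge 0$ on $[0,b)$, and since $w(0)>0$ pick some $j_0$ with $w_{j_0}(0)>0$; the same lemma then ensures $w_{j_0}(x)>0$ throughout $[0,b)$. If $k=j_0$ there is nothing to prove. Otherwise, applying $\mathbf{F}$-accessibility of $k$ at $\beta$ to the specific vector $w=w(\beta)$ yields a path $k=i_0\to i_1\to\cdots\to i_m=j_0$ with $(DF)_{i_\ell,i_{\ell+1}}(w(\beta),\beta)>0$ for each $\ell$.

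Next I would proceed by downward induction on $\ell\in\{m,m-1,\ldots,0\}$ to prove $w_{i_\ell}(x)>0$ on $(\beta,b)$; the base $\ell=m$ is already in hand. For the inductive step I would expand $F_{i_\ell}(w,x)$ via the fundamental theorem of calculus exactly as in the proof of Lemma~\ref{lem:metzler}, use $F_{i_\ell}(0,x)=0$ from \eqref{fequal0}, and discard the nonnegative off-diagonal Metzler contributions for $j\ne i_\ell,i_{\ell+1}$ to obtain the differential inequality
\[
\dot w_{i_\ell}(x) \ \ge\ -C\, w_{i_\ell}(x) + \kappa(x)\, w_{i_{\ell+1}}(x) \quad \text{a.e.\ near } \beta,
\]
with $\kappa(x)=\int_0^1 (DF)_{i_\ell,i_{\ell+1}}(tw(x),x)\,dt$ and $C$ a local bound on $|(DF)_{i_\ell,i_\ell}|$. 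Multiplying by $e^{Cx}$ and integrating from $\beta$, together with $w_{i_\ell}(\beta)\ge 0$ (from Lemma~\ref{lemma2}) and the inductive hypothesis $w_{i_{\ell+1}}(s)>0$ on $(\beta,b)$, would give $w_{i_\ell}(x)>0$ on a small right interval $(\beta,\beta+\eta]$; a shifted application of Lemma~\ref{lemma2} starting at $\beta+\eta$ then extends the positivity to $[\beta+\eta,b)$, and taking $\ell=0$ delivers the conclusion.

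The main technical obstacle will be establishing the lower bound $\kappa(x)\ge c_1>0$ on some $[\beta,\beta+\eta]$. My plan for this is to use joint continuity of $D\mathbf{F}$ in $(w,x)$ to find an open neighborhood of $(w(\beta),\beta)$ on which $(DF)_{i_\ell,i_{\ell+1}}\ge c_0/2$, and continuity of $w(\cdot)$ to keep $tw(x)$ inside that neighborhood whenever $x$ is sufficiently close to $\beta$ and $t$ is sufficiently close to $1$. The crucial point is that $\|w(\beta)\|>0$ (because $w_{j_0}(\beta)>0$), so that the admissible range of $t$ near $1$ has length bounded below uniformly in $x$; without this, the contracting family $tw(\beta)$ would sweep through the origin as $t$ varies and the integrand could fail to be positive on any fixed subinterval, breaking the inductive step.
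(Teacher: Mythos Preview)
Your argument is correct and takes a genuinely different route from the paper's. The paper argues by contradiction \emph{at the single point} $\beta$: assuming $w_k(\beta_1)=0$ for some $\beta_1>\beta$ forces (via Lemma~\ref{lemma2}) $w_k\equiv 0$ on $[0,\beta_1]$ and hence $w_k'(\beta)=0$; then, using only the upper-solution inequality at $x=\beta$ and the integral identity of Lemma~\ref{lem:metzler}, it chains the equalities $F_{j_i}(w(\beta),\beta)=0$ down the accessibility path to conclude $w_{j_s}(\beta)=w_j(\beta)=0$, contradicting $w_j(\beta)>0$. In contrast, you propagate \emph{positivity} forward in $x$ by a Gronwall-type integration on a short interval $(\beta,\beta+\eta]$ and then extend by a shifted Lemma~\ref{lemma2}. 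Your method is constructive and yields, in principle, quantitative lower bounds on $w_{i_\ell}$ near $\beta$, but it requires joint continuity of $D\mathbf{F}$ in $(w,x)$ to make $\kappa(x)\ge c_1>0$ on an interval; the paper's pointwise argument only needs continuity of $D\mathbf{F}(\cdot,\beta)$ in $w$ (to ensure the integral $\int_0^1 \partial_{w_{j_{i+1}}}F_{j_i}(\cdot,\beta)\,dt$ is strictly positive near $t=1$). Both approaches use the same accessibility path and the same Metzler/FTC decomposition; the paper trades your interval analysis for a cleaner single-point algebraic contradiction.
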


\begin{proof}
It follows from Lemma~\ref{lemma2} that if $w_k(\beta)>0$ then  $w_k(x)>0$ holds everywhere in $[\beta,b)$. Therefore we may without loss of generality assume that $w_k(\beta)=0$. Let us suppose by contradiction that there exists $\beta_1\in (\beta,b)$ such that $w_k(\beta_1)=0$. Then  $w_k(x)\equiv 0$ in $[0,\beta_1]$. In particular, $w_k'(\beta)=0$. Since $w(0)>0$,  there exists $j$ such that $w_j(0)>0$  and, thus, $w_j(\beta)>0$. By the assumption, there exists a directed path $k\rightsquigarrow j$ in the graph $\Gamma(D\mathbf{F}(w,\beta))$. Equivalently, there exists  a sequence of pair-wise distinct $j_0=k$, $j_1,\ldots,j_{s-1}$, $j_m=j$ such that
\begin{equation}\label{conditi}
\frac{\partial F_{j_{i}}}{\partial w_{j_{i+1}}}(w(\beta),\beta)>0,\qquad \forall  i=0,1,\ldots, s-1.
\end{equation}
For any $i=0,\ldots, s-1$, let us define
$$
v_i=w(\beta)-(w_{j_0}(\beta)e_{j_0}+\ldots+w_{j_i}(\beta)e_{j_i}),
$$
where $e_i$ denotes the $i$th coordinate unit vector in $\R{N}$. Then
\begin{equation}\label{wbeta}
w(\beta)=v_0\ge_{j_0} v_1\ge_{j_1} \ldots \ge_{j_{s-1}} v_{j_s}=v_{j}\ge 0.
\end{equation}
Therefore by (\ref{system1}) and Lemma~\ref{lem:metzler} it follows for $j_0=k$ that
$$
0=w_{j_0}'(\beta)\ge F_{j_0}(v_0,\beta)\ge F_{j_0}(v_1,\beta)\ge F_{j_0}(0,\beta)=0,
$$
hence $F_{j_0}(v_0,\beta)= F_{j_0}(v_1,\beta)=0$. Arguing as in (\ref{fund}) we find
\begin{equation}\label{lastin}
\begin{split}
0&=F_{j_0}(v_0,\beta)-F_{j_0}(v_1,\beta)\\
&=\sum_{i=1,i\ne j_0}^N(v_0-v_1)_i\int_{0}^1\frac{\partial F_{j_0}(v_0+t(v_0-v_1),\beta)}{\partial w_i} dt\\
&\ge0.
\end{split}
\end{equation}
It follows from (\ref{lastin}), the nonnegativity of $(v_0-v_1)_{i}$ and the partial derivatives (for $i\ne j_0$) that all summands of  the latter sum must vanish. Since the integrands are non-negative continuous functions, they must vanish identically for $t\in [0,1]$. In particular, (\ref{conditi}) readily implies  that $(v_0-v_1)_{j_1}=0$. Thus, $w_{j_1}(\beta)=0$, and by the above we have $w'_{j_1}(\beta)=0$

Repeating  the same argument for the pair $(j_1,j_2)$ etc. implies $w_{j_2}(\beta)=0$ etc., thus yielding that $w_{j_s}(\beta)=w_{j}(\beta)=0$, a contradiction follows.
\end{proof}

%

\begin{proposition}[Comparison principle]\label{pro:comparison}
Let $u(x)$ and $v(x)$ be resp. upper and lower solutions to $(\ref{system1}))$ such that $u(0) \geq v(0)$. Then $u(x)\geq v(x)$  for all $x\in[0,b)$. If additionally the patch $k$ is $\mathbf{F}$-accessible at some $\beta\in [0,b)$    and $u(0) > v(0)$ then $u_k(x)> v_k(x)$  for all $x\in(\beta,b)$. If particular, if $(\ref{system1})$ is irreducible  and $u(0) > v(0)$  then $u(x)\gg v(x)$  for all $x\in(0,b)$.
\end{proposition}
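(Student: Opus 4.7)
The plan is to reduce the proposition to Lemmas \ref{lemma2} and \ref{lemma20} by linearizing along the segment joining $v$ to $u$. Setting $w(x) := u(x) - v(x)$ and subtracting the defining inequalities for upper and lower solutions gives
\[
w'(x) \ge \mathbf{F}(u(x),x) - \mathbf{F}(v(x),x) = A(x)\,w(x),
\]
where, exactly as in (\ref{fund}),
\[
A(x) := \int_0^1 D\mathbf{F}\bigl(v(x)+t\,w(x),\,x\bigr)\,dt.
\]
Since $D\mathbf{F}$ satisfies the Kamke-Müller condition (\ref{metzler1}), the off-diagonal entries of the integrand are non-negative, hence $A(x)$ is itself a Metzler matrix. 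Thus $w$ is an upper solution of the linear auxiliary system $y' = \tilde{\mathbf{F}}(y,x)$ with $\tilde{\mathbf{F}}(y,x) := A(x)\,y$, and this $\tilde{\mathbf{F}}$ manifestly satisfies all the structural hypotheses used in Lemmas \ref{lemma2} and \ref{lemma20}: it is Lipschitz in $y$, has Metzler Jacobian $A(x)$, and vanishes at $y=0$.

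The first claim is now immediate: since $w(0) \ge 0$, Lemma \ref{lemma2} applied to $w$ (viewed as an upper solution of the auxiliary system) gives $w(x) \ge 0$ on $[0,b)$, i.e.\ $u(x) \ge v(x)$. For the second claim, $u(0) > v(0)$ means $w_j(0) > 0$ for some $j$. To invoke Lemma \ref{lemma20} I must verify that the $k$-th patch is $\tilde{\mathbf{F}}$-accessible at $\beta$, which amounts to showing that $\Gamma(A(\beta))$ is connected from $k$. By $\mathbf{F}$-accessibility of $k$ at $\beta$, applied at the point $v(\beta)$, there is a directed path $k = j_0 \rightsquigarrow j_1 \rightsquigarrow \cdots \rightsquigarrow j_s$ in $\Gamma(D\mathbf{F}(v(\beta),\beta))$. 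Each arc corresponds to a strictly positive off-diagonal entry of $D\mathbf{F}(v(\beta),\beta)$; by continuity of $D\mathbf{F}$ in its first argument the entry remains positive for $t$ in a neighbourhood of $0$, so the corresponding entry of $A(\beta)$ is strictly positive as an integral of a non-negative function that is positive on a set of positive measure. Hence the same path lies in $\Gamma(A(\beta))$, establishing $\tilde{\mathbf{F}}$-accessibility of $k$ at $\beta$. Lemma \ref{lemma20} then yields $w_k(x) > 0$ on $(\beta,b)$, which is the desired $u_k(x) > v_k(x)$.

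The final assertion under the irreducibility hypothesis is obtained by applying the second step to every index $k$ at an arbitrarily small $\beta > 0$: since accessibility then holds at every interior point, we conclude $u_k(x) > v_k(x)$ for all $k$ and all $x \in (0,b)$, i.e.\ $u(x) \gg v(x)$. The main obstacle in this plan is the \emph{accessibility transfer}: the averaging that defines $A(x)$ could a priori destroy the combinatorial structure of $D\mathbf{F}$, and the argument above works only because accessibility is preserved under passage to the averaged Jacobian once one exploits the continuity of $D\mathbf{F}$ in $w$. Verifying that the Metzler property survives the averaging is straightforward, but fixing a path and tracking its arcs through the integral is the delicate point of the proof. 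Everything else is a routine invocation of the preceding lemmas.
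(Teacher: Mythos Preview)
Your argument is correct and follows the same overall strategy as the paper: set $w=u-v$, exhibit $w$ as an upper solution of an auxiliary system inheriting the Kamke--M\"uller structure, and invoke Lemmas~\ref{lemma2} and~\ref{lemma20}. The difference lies in the auxiliary system. You linearize along the segment, taking $\tilde{\mathbf{F}}(y,x)=A(x)y$ with $A(x)=\int_0^1 D\mathbf{F}(v(x)+tw(x),x)\,dt$; the paper instead uses the \emph{nonlinear} shift $G(\xi,x):=\mathbf{F}(v(x)+\xi,x)-\mathbf{F}(v(x),x)$. Both make $w$ an upper solution and both preserve the Metzler property, but the paper's choice has one advantage: since $DG(\xi,x)=D\mathbf{F}(\xi+v(x),x)$, the accessibility hypothesis (which by definition holds for \emph{all} first arguments of $D\mathbf{F}$) transfers to $G$ for free, with no averaging and no continuity argument needed. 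What you correctly identify as ``the delicate point'' --- tracking a fixed path through the integral defining $A(\beta)$ via continuity of $D\mathbf{F}$ --- simply does not arise in the paper's version. Your route is a legitimate alternative and the continuity step goes through under the $C^1$ hypothesis implicit in the accessibility definition, but the paper's shift avoids the issue entirely.
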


\begin{proof}
Let $w(x)=u(x)-v(x)$. Then
$$
w'(x)\ge \mathbf{F}(v(x)+w(x),x)-\mathbf{F}(v(x),x)=\mathbf{G}(w(x),x),
$$
i.e. $w(x)$ is an upper solution to $\Lop_G w:=\frac{d}{dx}w(x)-G(w(x),x)$
with $G(\xi,x):=\mathbf{F}(v(x)+\xi,x)-\mathbf{F}(v(x),x)$. We have for the corresponding Jacobi matrices
$$
DG(\xi,x)=D \mathbf{F}(\xi+v(x),x),
$$
i.e. $\Lop$ and $\Lop_g$ satisfy simultaneously  the Kamke-M\"uller  condition. This readily yields the first claim of the proposition.

Now suppose that for some $k$ and $\beta\in [0,b)$ the associated digraph $\Gamma(D\mathbf{F}(w(\beta),\beta))$ is connected from $k$ and $u(0) > v(0)$. Since $D\mathbf{G}(w(\beta),\beta)=D\mathbf{F}(u(\beta),\beta)$ the digraph $\Gamma(D\mathbf{G}(w(\beta),\beta))$ is also connected from $k$. Applying Lemma~\ref{lemma20} we deduce $w_k(x)>0$, i.e.  $u_k(x)> v_k(x)$ for all  $x\in(\beta,b)$, as desired.
\end{proof}

\begin{corollary}\label{cor:comparison}
Let $u(x)$ be an lower (resp. upper) solution to $(\ref{system1})$. If $u(0)\le 0$ (resp $u(0)\ge0$) then  $u(x)\leq 0$ (resp. $u(x)\geq 0$ ) for all $x\in[0,b)$. If additionally the patch $k$ is $\mathbf{F}$-accessible at some $\beta\in [0,b)$    and $u(0) < 0$ (resp. $u(0)>0$) then $u_k(x)< 0$ (resp. $u_k(x)> 0$) for all $x\in(\beta,b)$.
\end{corollary}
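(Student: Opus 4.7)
The plan is to derive both statements directly from Proposition~\ref{pro:comparison} by using the constant zero function as the comparison solution. By hypothesis~\eqref{fequal0}, $v(x)\equiv 0$ satisfies $v'(x)=0=\mathbf{F}(0,x)=\mathbf{F}(v(x),x)$ on $[0,b)$, so $v\equiv 0$ qualifies simultaneously as an upper and as a lower solution to \eqref{system1}. Hence both halves of the corollary reduce to one invocation of the comparison principle each, and no new analytic ingredient is required.

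First I would handle the upper-solution case. Let $u$ be an upper solution with $u(0)\ge 0$. Taking $v\equiv 0$ as the lower solution in Proposition~\ref{pro:comparison} and observing that $u(0)\ge 0=v(0)$, the proposition yields $u(x)\ge v(x)=0$ for all $x\in[0,b)$. If additionally $u(0)>0$ and patch $k$ is $\mathbf{F}$-accessible at some $\beta\in[0,b)$, the strict part of Proposition~\ref{pro:comparison} gives $u_k(x)>v_k(x)=0$ on $(\beta,b)$, which is exactly the desired conclusion.

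The lower-solution case is symmetric, but one must take care to swap roles so that the inequality hypotheses match. If $u$ is a lower solution with $u(0)\le 0$, I would now use $v\equiv 0$ as the \emph{upper} solution in Proposition~\ref{pro:comparison} and $u$ as the lower solution. The required hypothesis $v(0)\ge u(0)$ becomes $0\ge u(0)$, which holds, so the proposition yields $v(x)\ge u(x)$, i.e.\ $u(x)\le 0$ on $[0,b)$. If $u(0)<0$, the strict comparison part gives $v_k(x)>u_k(x)$ for $x\in(\beta,b)$, i.e.\ $u_k(x)<0$, completing the proof.

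The only real obstacle is the bookkeeping: making sure that when we switch between the two sign conventions the roles of $u$ and $v$ in Proposition~\ref{pro:comparison} are reversed correctly, so that the hypothesis $u(0)\ge v(0)$ is always satisfied. Once this is organized, the corollary is immediate from Proposition~\ref{pro:comparison} together with~\eqref{fequal0}.
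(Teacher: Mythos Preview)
Your proof is correct and takes essentially the same approach as the paper: the paper's one-line proof simply observes that $w(x)\equiv 0$ is a solution of \eqref{system1}, which is exactly your use of \eqref{fequal0} to feed the zero function into Proposition~\ref{pro:comparison} as the comparison solution. Your careful tracking of which side of the comparison the zero function occupies in each case is the only content beyond the paper's terse statement.
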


\begin{proof}
Follows immediately from the fact that  $w(x)\equiv 0$ is a solution of (\ref{system1}).
\end{proof}

\begin{proposition}[Existence and Uniqueness]\label{pro:exists}
Let $(\ref{system1})$ satisfy the Kamke-M\"uller  condition and  there exists $C(\mathbf{F})>0$ such that
\begin{equation}\label{concavef}
\max_{k}F_k(w,x)\le C(\mathbf{F})\|w\|_\infty , \qquad \forall w\in \R{N}_+,\, x\in [0,b).
\end{equation}
Then for any $\xi\in \R{N}_+$  there exists a unique solution $ w(x)\in C^1([0,b),\R{N}_+)$ of $(\ref{system1})$ with $w(0)=\xi$. Furthermore, if $w(x)$ is a nonnegative lower solution to $(\ref{system1})$ then
\begin{align}\label{est1f}
\|w(x)\|_\infty\le \|w(0)\|_\infty e^{C(\mathbf{F})b}.
\end{align}
\end{proposition}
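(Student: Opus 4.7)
The plan is to combine classical Picard--Lindel\"of theory with the comparison machinery of Lemmas~\ref{lem:metzler}--\ref{lemma2} and a Gronwall-type estimate on $\|w(x)\|_\infty$.

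First I would invoke local existence and uniqueness: since $\mathbf{F}(w,x)$ is locally Lipschitz in $w$, Picard--Lindel\"of produces a unique $C^1$ solution on some maximal interval $[0,x^*)\subset[0,b)$ with $w(0)=\xi$. This local solution is simultaneously an upper and a lower solution of (\ref{system1}), so Lemma~\ref{lemma2} applied with $w(0)=\xi\ge 0$ immediately yields $w(x)\ge 0$ throughout $[0,x^*)$. In particular, the local solution itself qualifies as a nonnegative lower solution, so any a priori bound I prove for lower solutions will apply to it.

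The heart of the argument is the bound (\ref{est1f}). Set $h(x):=\|w(x)\|_\infty=\max_{1\le k\le N}w_k(x)$, which is locally Lipschitz as a pointwise maximum of locally Lipschitz functions; in particular $h$ is absolutely continuous and differentiable almost everywhere. At any $x_0$ where $h$ and each $w_k$ are differentiable, pick an index $k$ realizing $h(x_0)=w_k(x_0)$. Since $h-w_k\ge 0$ with equality at $x_0$, the function $h-w_k$ attains a local minimum at $x_0$, forcing $h'(x_0)=w_k'(x_0)$. Using that $w$ is a lower solution and invoking the growth bound (\ref{concavef}),
\begin{equation*}
h'(x_0)\;=\;w_k'(x_0)\;\le\;F_k(w(x_0),x_0)\;\le\;C(\mathbf{F})\,h(x_0).
\end{equation*}
Gronwall's inequality applied to the absolutely continuous $h$ then gives $h(x)\le h(0)\,e^{C(\mathbf{F})x}$, which is precisely (\ref{est1f}). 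Finally, to promote the local solution to a global one, the bound furnishes $\|w(x)\|_\infty\le\|\xi\|_\infty e^{C(\mathbf{F})b}$ on $[0,x^*)$; this uniform control, together with the local Lipschitz property of $\mathbf{F}$, precludes finite-time blow-up, and the standard continuation principle forces $x^*=b$. Uniqueness is built into Picard--Lindel\"of at each step of the continuation and therefore is global.

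The main obstacle I anticipate is the technical moment where $h'(x_0)=w_k'(x_0)$ must be justified at points where the argmax of $w$ changes, since $h$ is in general only Lipschitz. The local-minimum observation for $h-w_k$ circumvents this cleanly and avoids any appeal to Dini-derivative machinery; in particular, it is insensitive to ties in the maximum, because one may simply pick any index realizing it.
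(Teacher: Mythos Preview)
Your proof is correct and follows essentially the same route as the paper's: local existence, nonnegativity via Lemma~\ref{lemma2}, a Gronwall argument on $H(x)=\|w(x)\|_\infty$, and continuation to $[0,b)$. Your direct application of (\ref{concavef}) to $w(x)$ (bypassing the paper's intermediate step through Lemma~\ref{lem:metzler} and $H(x)\mathbf{1}$) and your local-minimum justification of $h'(x_0)=w_k'(x_0)$ are both valid minor streamlinings.
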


\begin{proof}
By the Cauchy-Peano Existence Theorem, (\ref{system1}) has a unique solution $w(x)$ in some  interval $[0,\beta )$, $0<\beta \leq b$. By Lemma~\ref{lemma2}, $w(x)\ge0$ for any $x\ge0$ in the domain of the definition. Let $[0,b')$ be the maximal interval of existence of the solution:
$$
b':=\sup\{\beta>0: \text{ there exists a solution of (\ref{system1}) on }[0,\beta)  \}.
$$
We claim that $b' =b$. It suffices to show that a solution $w(x)$ is uniformly bounded on any existence interval $[0,\beta)$, i.e. there exists $M>0$ such that for any $\beta<b'$ the inequality $\|w(x)\|_{\infty}\leq M$ holds in $[0,\beta)$. To this end, we make a more general assumption, that $w(x)$ is a \textit{nonnegative} lower solution to (\ref{system1}) on $[0,\beta)$ and consider
$$
H(x)=\|w(x)\|_{\infty}=\max_{k}w_k(x).
$$
In particular,  $H(x)$ is locally Lipschitz  on $[0,\beta)$, and thus a.e. differentiable there. Then for any point of differentiability  $x$ of $H$ there exists $k$ such that $H(x)=w_k(x)$ and $H'(x)=w'_k(x)$. We have $w(x)\le_k  H(x)\mathbf{1}$ which implies by Lemma~\ref{lem:metzler} and (\ref{concavef}) that
\begin{equation}\label{Hprim}
H'(x)=w'_k(x)\le F_k(w(x),x)\le F_k(H(x)\mathbf{1},x)\le C(\mathbf{F})H(x).
\end{equation}
Integrating the latter inequality (note that $H$ is absolutely continuous) yields
$$
H(x)\le H(0)e^{C(\mathbf{F})x}\le \|w(0)\|_\infty e^{NC(\mathbf{F})b}.
$$
This proves (\ref{est1f}). Furthermore, since the latter upper bound is independent of $\beta$, this implies $b'=b$, and thus the existence and the uniqueness of solution of (\ref{system1}) on $[0,b)$.
\end{proof}

\subsection{Further estimates for concave $\mathbf{F}$}

To proceed we consider some additional assumptions on $\mathbf{F}$. Namely, a vector-function $F\in C(\R{N},\R{N})$ is said to be \textit{concave} if
\begin{equation}\label{fcond0}
\mathbf{F}(\alpha_1 u+\alpha_2 v)\le \alpha_1 \mathbf{F}(u)+\alpha_2 \mathbf{F}(v), \qquad \forall \alpha_i\ge 1, \,\,u,v\in \R{N}.
\end{equation}
A concave vector-function $\mathbf{F}$ is said to be \textit{strongly concave} if for any $\alpha>1$  and any $u\ge 0$ with $u_k>0$ there holds
\begin{equation}\label{strongcon}
F_k(\alpha  u)< \alpha F_k(u).
\end{equation}

\begin{corollary}\label{cor:norma}
Let $\mathbf{F}$ be a concave vector-function satisfying the Kamke-M\"uller  condition. Let $v(x)$ be a lower and $u(x)$ be an upper solutions of $(\ref{system1})$. Then $v(x)-u(x)$ is a lower solution of $(\ref{system1})$.
\end{corollary}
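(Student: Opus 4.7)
The proof reduces to a short algebraic manipulation once one extracts the right consequence of the concavity assumption \eqref{fcond0}. My plan proceeds in two steps.

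\emph{Step 1 (Subadditivity).} I would first observe that specializing \eqref{fcond0} to $\alpha_1=\alpha_2=1$ yields the pointwise subadditivity
\[
\mathbf{F}(p+q,x)\le \mathbf{F}(p,x)+\mathbf{F}(q,x),\qquad p,q\in \R{N},\,\, x\in [0,b).
\]
Since the definition \eqref{fcond0} is imposed on all of $\R{N}$, this inequality is available for arbitrary $p,q$, not merely on the positive cone. Taking $p=u(x)$ and $q=v(x)-u(x)$ and rearranging gives
\[
\mathbf{F}(v(x),x)-\mathbf{F}(u(x),x)\le \mathbf{F}(v(x)-u(x),x).
\]

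\emph{Step 2 (Combining the defining inequalities).} By definition of a lower and an upper solution one has $v'(x)\le \mathbf{F}(v(x),x)$ and $u'(x)\ge \mathbf{F}(u(x),x)$, so subtracting yields
\[
(v-u)'(x)\le \mathbf{F}(v(x),x)-\mathbf{F}(u(x),x)\le \mathbf{F}(v(x)-u(x),x),
\]
the second inequality being the bound from Step~1. This is precisely the condition that $v-u$ is a lower solution of \eqref{system1}.

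\textbf{Main obstacle.} There is essentially no obstacle: the argument is a two-line rearrangement, and the proof goes through whenever the conclusions of Steps~1 and 2 can be chained. The only conceptual point worth flagging is that the needed consequence of \eqref{fcond0} is the unrestricted subadditivity on all of $\R{N}$; this is why the condition is stated for arbitrary $u,v\in\R{N}$ rather than only on the positive cone, where $v-u$ might fail to lie. Notably, the Kamke--M\"uller hypothesis plays no explicit role in this particular corollary; it is listed to keep the statement inside the coherent framework of the section, so that the resulting lower solution $v-u$ can be fed without friction into the comparison results (Lemma~\ref{lem:metzler}, Proposition~\ref{pro:comparison}) whenever the corollary is applied downstream.
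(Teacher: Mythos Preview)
Your proof is correct and coincides with the paper's own argument: specialize \eqref{fcond0} to $\alpha_1=\alpha_2=1$ to get subadditivity, then combine with the upper/lower solution inequalities exactly as you did. Your remark that the Kamke--M\"uller hypothesis is not used in the argument itself is also accurate.
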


\begin{proof}
The claim follows from (\ref{fcond0}) with $\alpha_1=\alpha_2=1$:
$$
v'(x)-u'(x)\le \mathbf{F}(v(x),x)-\mathbf{F}(u(x),x)\le \mathbf{F}(v(x)-u(x),x).
$$
\end{proof}

\begin{corollary}\label{compnorm}
Let $\mathbf{F}$ be a concave vector-function satisfying the Kamke-M\"uller  condition and $(\ref{concavef})$.
If  $v(x)$, $u(x)$ are solutions of $(\ref{system1})$ with $v(0)\ge0,u(0)\ge0$ then
\begin{equation}\label{normm}
\|  v(x)-  u(x)\|_{C[0,b)} \leq e^{C(\mathbf{F})b}\|v(0)-u(0)\|_{\infty}.
\end{equation}
\end{corollary}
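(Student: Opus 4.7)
The plan is to adapt the Gronwall estimate from the proof of Proposition~\ref{pro:exists} to the difference of two solutions, exploiting concavity to control sign changes. First I would set $w(x) = v(x) - u(x)$ and observe that, since both $v$ and $u$ are simultaneously lower and upper solutions of \eqref{system1}, Corollary~\ref{cor:norma} applies in both directions: both $w$ and $-w$ are lower solutions of \eqref{system1}. This symmetry is what will let us dispense with sign restrictions.

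Next, I would study the envelope $H(x) := \|w(x)\|_\infty = \max_k |w_k(x)|$, which is locally Lipschitz on $[0,b)$ and hence almost everywhere differentiable. At a differentiable point $x_0$ with $H(x_0) > 0$, I would pick any index $k$ attaining the maximum, noting that $w_k$ keeps a constant sign $\epsilon \in \{\pm 1\}$ in a neighborhood of $x_0$; thus $H \ge \epsilon w_k$ near $x_0$ with equality at $x_0$, which forces $H'(x_0) = \epsilon w_k'(x_0)$ by the usual envelope argument. Writing $\tilde w = \epsilon w$ (a lower solution by the first step), the definition of $H$ gives $\tilde w(x_0) \le_k H(x_0)\mathbf{1}$, so Lemma~\ref{lem:metzler} together with the growth bound \eqref{concavef} applied to the nonnegative vector $H(x_0)\mathbf{1}$ yields
\[
H'(x_0) \;=\; \tilde w_k'(x_0) \;\le\; F_k(\tilde w(x_0), x_0) \;\le\; F_k(H(x_0)\mathbf{1}, x_0) \;\le\; C(\mathbf{F}) H(x_0).
\]
At differentiable points with $H(x_0) = 0$ the nonnegativity of $H$ forces $H'(x_0) = 0$, so the same bound holds trivially.

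Since $H$ is absolutely continuous on $[0,b)$, integrating the differential inequality $H'(x) \le C(\mathbf{F}) H(x)$ gives $H(x) \le H(0) e^{C(\mathbf{F}) x} \le e^{C(\mathbf{F}) b} \|v(0) - u(0)\|_\infty$ for all $x \in [0,b)$, which is exactly \eqref{normm}. The only mildly delicate step is the identification $H'(x_0) = \epsilon w_k'(x_0)$ at differentiable points where $H(x_0) > 0$; this is the place where the locally constant sign of $w_k$ must be used, but it is routine. The rest is a direct transcription of the Gronwall step from Proposition~\ref{pro:exists}, now applied to the sign-indefinite quantity $w$ via its componentwise absolute value and made legitimate by the fact that $\pm w$ are both lower solutions thanks to the concavity of $\mathbf{F}$.
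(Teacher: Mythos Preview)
Your argument is correct and takes a genuinely different route from the paper. The paper first treats the ordered case $v(0)\ge u(0)$, where $w=v-u$ is a \emph{nonnegative} lower solution and the bound~\eqref{est1f} from Proposition~\ref{pro:exists} applies directly; for the general case it introduces an auxiliary third solution with initial data $w_k(0)=\min(u_k(0),v_k(0))$, applies the ordered case to the pairs $(u,w)$ and $(v,w)$, and combines the two estimates via a componentwise max. Your approach instead works directly with the signed difference: you exploit that concavity makes both $v-u$ and $u-v$ lower solutions (Corollary~\ref{cor:norma} applied in both directions), and you run the Gronwall envelope argument on $H(x)=\|v(x)-u(x)\|_\infty$, choosing the sign $\epsilon$ at each point of differentiability according to which of $\pm w_k$ realizes the maximum. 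The envelope step $H'(x_0)=\epsilon w_k'(x_0)$ is justified exactly as you say, since $H-\epsilon w_k\ge 0$ attains a minimum at $x_0$ and both functions are differentiable there. Your argument is more direct (no third solution needed) and shows transparently why concavity is the right structural hypothesis; the paper's reduction is more modular in that it recycles \eqref{est1f} verbatim rather than re-running the differential inequality.
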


\begin{proof}
By the assumptions $u(0),v(0)\in \R{N}_+$. First suppose that $v(0)\geq u(0)$ and define $w(x)=v(x)-u(x)$. Then by Proposition~\ref{pro:comparison}, $w(x)\geq 0$ for any $x\in[0,b)$. Therefore by Corollary~\ref{cor:norma} $w$ is a (nonnegative) lower solution to (\ref{system1}), thus by Proposition~\ref{pro:exists} we have
$\| w(x)\|_{C[0,b)} \leq e^{C(\mathbf{F})b}\|w(0)\|_{\infty},$ which proves (\ref{normm}).

In the general case, let $w(x)$ be the solution of (\ref{system1}) with the initial conditions $w_k(0)=\min(u_k(0),v_k(0))$, $1\le k\le N$. Then $u(0)\geq w(0)$ and $v(0)\geq w(0)$, hence by the above
\begin{equation}\label{norms}
\begin{split}
\|  u(x)-  w(x)\|_{C[0,b)} &\leq e^{C(\mathbf{F})b}\|  u(0)-  w(0)\|_{\infty}\\
\|  v(x)-  w(x)\|_{C[0,b)} &\leq e^{C(\mathbf{F})b}\|  v(0)-  w(0)\|_{\infty}.
\end{split}
\end{equation}
Since $u(x)\ge w(x)$ and $v(x)\ge w(x)$ for any $x\in [0,b)$ we also have
$$
w(x)-v(x)\le u(x)-  v(x)\le u(x)-w(x),
$$
which by virtue of (\ref{norms}) yields
$$
\|  u(x)-  v(x)\|_{C[0,b)}\le  e^{C(\mathbf{F})b}\max\{\|  u(0)-  w(0)\|_{\infty},\|  v(0)-  w(0)\|_{\infty}\}
$$
On the other hand, by our choice,  for any $k$ there holds that
$$
\max\{|u_k(0)-w_k(0)|,|v_k(0)-w_k(0)|\}=|u_k(0)-v_k(0)|,
$$
hence
$$
\max\{\|  u(0)-  w(0)\|_{\infty}, \|  v(0)-  w(0)\|_{\infty}\}\le \|u(0)-v(0)\|_\infty.
$$
which  yields (\ref{normm}).
%
\end{proof}

\begin{proposition}\label{pro:312}
Let $\phi(x,\xi)$ denote the solution $w(x)$ of problem $(\ref{system1})$ in $[0,b)$ with the initial condition $w(0)=\xi\in\R{N}_+$. Suppose $\mathbf{F}$ satisfy the Kamke-M\"uller condition and that it is concave. Then
\begin{equation}\label{conv2}
\phi(x,\alpha \xi)\le \alpha \phi(x,\xi), \qquad \forall \alpha\ge1, \forall x\in [0,b).
\end{equation}
Let additionally $\mathbf{F}(w,x)$ be strongly concave, $\xi> 0$ and $\alpha>1$. If the patch $k$ is $\mathbf{F}$-accessible at some $\beta\in [0,b)$ then
\begin{equation}\label{conv3}
\phi_k(x,\alpha \xi)< \alpha \phi_k(x,\xi), \qquad   \forall x\in (\beta,b).\quad\quad\quad
\end{equation}
\end{proposition}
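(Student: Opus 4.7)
My plan is to prove $(\ref{conv2})$ by a comparison argument and $(\ref{conv3})$ by combining a strict positivity result at a ``seed'' component (driven by strong concavity) with a graph-propagation in the style of Lemma~\ref{lemma20}. For Part~1, set $u(x) := \alpha\phi(x,\xi)$; applying $(\ref{fcond0})$ with $\alpha_1 = \alpha$, $\alpha_2 = 1$ and $v = 0$, together with $\mathbf{F}(0,x) = 0$, gives
\[
\mathbf{F}(\alpha\phi(x,\xi),x) \le \alpha\mathbf{F}(\phi(x,\xi),x) = u'(x),
\]
so $u$ is an upper solution of $(\ref{system1})$. Since $v(x) := \phi(x,\alpha\xi)$ is an exact (hence lower) solution with $v(0) = u(0) = \alpha\xi$, Proposition~\ref{pro:comparison} yields $v(x) \le u(x)$, which is $(\ref{conv2})$.

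\textbf{Setup and seed positivity.} Under strong concavity with $\xi > 0$ and $\alpha > 1$, set $w := u - v \ge 0$; then $w(0) = 0$ and
\[
w'(x) = G(w(x),x) + \delta(x), \quad G(\zeta,x) := \mathbf{F}(v(x)+\zeta,x) - \mathbf{F}(v(x),x),
\]
with $\delta(x) := \alpha\mathbf{F}(\phi(x,\xi),x) - \mathbf{F}(\alpha\phi(x,\xi),x) \ge 0$. Since $D_\zeta G(\zeta,x) = D\mathbf{F}(v(x)+\zeta,x)$ is Metzler and $G(0,x) = 0$, $G$ inherits the Kamke--M\"uller setting. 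Pick $j_0$ with $\xi_{j_0} > 0$; by Lemma~\ref{lemma2}, $\phi_{j_0}(x,\xi) > 0$ on $[0,b)$, and then strong concavity $(\ref{strongcon})$ yields $\delta_{j_0}(x) > 0$ for every $x \in [0,b)$. I claim $w_{j_0}(x) > 0$ on $(0,b)$: wherever $w_{j_0}(x) = 0$, the relation $w(x) \ge_{j_0} 0$ and Lemma~\ref{lem:metzler} give $G_{j_0}(w(x),x) \ge 0$, hence $w'_{j_0}(x) \ge \delta_{j_0}(x) > 0$. If $x_* := \sup\{x \in (0,b) : w_{j_0} > 0 \text{ on } (0,x]\}$ were finite, continuity and $w_{j_0} \ge 0$ would force $w_{j_0}(x_*) = 0$ and the interior-minimum identity $w'_{j_0}(x_*) = 0$, contradicting the strict inequality.

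\textbf{Propagation to patch $k$.} Assume $j_0 \ne k$ (otherwise the seed positivity already gives the conclusion) and first take $\beta > 0$. Suppose by contradiction that $w_k(\beta) = 0$ (if $w_k(\beta) > 0$, Lemma~\ref{lemma2} applied to $w$ as an upper solution of $w' = G(w,x)$ on $[\beta,b)$ gives $w_k > 0$ there). Since $w_k \ge 0$ globally and $\beta$ is interior, $w'_k(\beta) = 0$, and the ODE together with nonnegativity of both summands forces $\delta_k(\beta) = 0$ and $G_k(w(\beta),\beta) = 0$. By accessibility of $k$ at $\beta$ applied to the Metzler matrix $D\mathbf{F}(u(\beta),\beta)$, there is a directed path $k = i_0 \to i_1 \to \cdots \to i_m = j_0$. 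Reproducing the computation of $(\ref{fund})$,
\[
0 = G_k(w(\beta),\beta) = \sum_{i \ne k} w_i(\beta) \int_0^1 \frac{\partial F_k}{\partial w_i}\bigl(v(\beta) + t w(\beta), \beta\bigr)\, dt,
\]
the $i = i_1$ summand has a strictly positive $t$-integral (continuity in $t$ plus the strict inequality $\partial F_k / \partial w_{i_1}(u(\beta),\beta) > 0$ at $t = 1$), so $w_{i_1}(\beta) = 0$. Applying the same interior-minimum argument to $w_{i_1}$ produces $G_{i_1}(w(\beta),\beta) = 0$, and iterating along the path forces $w_{i_2}(\beta) = \cdots = w_{j_0}(\beta) = 0$, contradicting the seed positivity. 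Hence $w_k(\beta) > 0$, and Lemma~\ref{lemma2} extends this to $w_k(x) > 0$ on $[\beta,b)$, which is $(\ref{conv3})$. The case $\beta = 0$ is handled by a Dyson-series expansion of the fundamental matrix of the linearization $w' = A(x) w + \delta(x)$ with $A(x) := \int_0^1 D\mathbf{F}(v(x) + t w(x),x)\, dt$: continuity and the inclusion $\Gamma(A(0)) \supseteq \Gamma(D\mathbf{F}(v(0),0))$ render the $k$-to-$j_0$ path active on a right-neighborhood of $0$, whence the $(k,j_0)$-entry of the fundamental matrix $\Phi(x,s)$ is positive for small $0 < s < x$, and the integral representation $w_k(x) = \int_0^x \sum_j \Phi_{kj}(x,s)\delta_j(s)\,ds$ is strictly positive.

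\textbf{Main obstacle.} The delicate point is the mismatch between the graph condition (attached to $D\mathbf{F}$ at a single argument) and the identity $(\ref{fund})$ (which averages $D\mathbf{F}$ along the segment $[v(\beta),u(\beta)]$). The resolution is to evaluate accessibility at $w^* = u(\beta)$ so that continuity of $\partial F_k/\partial w_{i_1}$ in $t$ at $t = 1$ upgrades pointwise positivity to a positive $t$-integral, allowing the iteration along the directed path to proceed without loss of information.
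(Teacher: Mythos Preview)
Your Part 1 is correct and matches the paper's argument. For Part 2, your approach is genuinely different from the paper's and, while correct for $\beta>0$, your treatment of $\beta=0$ is incomplete.

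The paper reverses the order in which accessibility and strong concavity are invoked. Rather than first producing a seed component $w_{j_0}>0$ via strong concavity and then propagating to $k$ via the digraph, the paper applies accessibility first: Corollary~\ref{cor:comparison} (with $u=\phi(\cdot,\xi)$, an exact solution, and $u(0)=\xi>0$) yields $\phi_k(x,\xi)>0$ directly for every $x\in(\beta,b)$. Strong concavity in component $k$ then gives $(\Lop(\alpha\phi(\cdot,\xi)))_k(x)>0$ on $(\beta,b)$. The contradiction argument is short: if $\alpha\phi_k(x_0,\xi)=\phi_k(x_0,\alpha\xi)$ at some $x_0\in(\beta,b)$, equality must in fact hold on all of $(\beta,x_0)$ (otherwise the last clause of Lemma~\ref{lemma2}, applied to the difference as in Proposition~\ref{pro:comparison}, would propagate the strict inequality past $x_0$); and on that interval, equality in the $k$th component combined with $w\ge v$ and Lemma~\ref{lem:metzler} forces $(\Lop w)_k=0$, contradicting the strict positivity just obtained. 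This route is uniform in $\beta\in[0,b)$ and needs no extra regularity.

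Your $\beta=0$ argument, by contrast, relies on continuity of $A(x)=\int_0^1 D\mathbf{F}(v(x)+tw(x),x)\,dt$ in $x$ to keep the $k\to j_0$ path alive on a right-neighborhood of $0$. That requires $D\mathbf{F}$ to be jointly continuous in $(w,x)$, which the proposition does not assume: accessibility only posits $\Gamma(D\mathbf{F}(w,\beta))$ connected from $k$ at the single point $\beta$, for every $w$. In the paper's main application \eqref{leftshort} the off-diagonal entries of $D\mathbf{F}$ are $D_{kj}(x)$, independent of $w$, so your extra hypothesis is harmless there; but at the stated level of generality the Dyson-series step is a gap. The cleanest fix is exactly the paper's reordering: use Corollary~\ref{cor:comparison} to obtain $\phi_k(\cdot,\xi)>0$ on $(\beta,b)$ and invoke \eqref{strongcon} in component $k$ itself, which removes both the graph-propagation step and the $\beta=0$ special case.
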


\begin{proof}

Define $u(x)= \phi(x,\xi)$, $v(x)= \phi(x,\alpha\xi)$ and $w(x)=\alpha \phi(x,\xi)$. By the concavity condition,
\begin{equation}\label{wequat}
\Lop w=\Lop(\alpha u)\ge \alpha \Lop(u)\ge0
\quad \forall x\in [0,b),
\end{equation}
where $\Lop u=\frac{du}{dx}-\mathbf{F}(u(x),x)$.
In other words, $w(x)$ is an upper solution with
$$
w(0)=v(0)=\alpha\xi,
$$
hence Proposition~\ref{pro:comparison} yields $w(x)\ge v(x)$ for $x\in [0,b)$. This yields (\ref{conv2}).

Now, suppose that $\mathbf{F}(w,x)$ is strongly concave, $\xi> 0$, $\alpha>1$ and patch $k$ is $\mathbf{F}$-accessible at some $\beta\in [0,b)$. By virtue of (\ref{conv2}), it suffices to show that the equality $w_k(x)=v_k(x)$ is impossible in $(\beta,b)$. Arguing by contradiction let us assume that there exists $x_0\in (\beta,b)$ such that $w_k(x_0)=v_k(x_0)$. We claim that in this case $w_k(x)\equiv v_k(x)$ for any $x\in [\beta,x_0)$. Indeed, if not then there exists $x_1\in(\beta,x_0)$ such that $w_k(x_1)>v_k(x_1)$, hence the second part of Proposition~\ref{pro:comparison} implies $w_k(x)> v_k(x)$ for any $x\in (x_1,b)$, a contradiction at the point $x_0$ follows. Thus, $w_k(x)\equiv v_k(x)$ and, thus,
\begin{equation}\label{thus1}
\Lop w(x)=0 \text{ for any } x\in [\beta,x_0).
\end{equation}
On the other hand, by the assumption $u(0)=\xi> 0$ and Corollary~\ref{cor:comparison} we have $u_k(x)>0$ for $x\in (\beta,b)$. Using the  strong concavity condition (\ref{strongcon}), $F_k(\alpha  u(x),x)< \alpha F_k(u(x),x)$ for $x\in (\beta,b)$ which yields $\Lop w(x)=(\Lop(\alpha u))(x)> \alpha \Lop(u(x))=0$,  a contradiction with (\ref{thus1}) completes the proof.
\end{proof}

\section{The main representation}\label{sec:general}
We start with an auxiliary model (\ref{balanceD}) below and then prove the existence of a unique positive solution of (\ref{genpr})--(\ref{genic}) and examine asymptotic behavior of the obtained solution. Everywhere in this section we assume the conditions \ref{Hain1}--\ref{Hain4} are satisfied.

\subsection{The balanced equations}
Now we consider the particular  case of (\ref{system1}) with $\mathbf{F}(w,x)$ given by (\ref{leftshort}). In other words, we consider the differential operator
\begin{equation}\label{Loperator}
\Lop w(x)=\frac{d w(x)}{dx}+\mathbf{M}(w(x),x)w(x)- \mathbf{D}(x)w(x).
\end{equation}

For further applications, it is useful to specify the properties of $M_k$. Recall that in an important for us case of the Lotka-McKendrick-Von~ Foester model (\ref{genpr}) with (\ref{logistic}), i.e. each $M_k(v,x)$ is actually an \textit{increasing linear} function in $v$. Keeping on the monotonicity, we also impose some additional  growth conditions on $M_k$. Namely, we suppose that each $M_k(v,x)$ satisfies \ref{Hain2}, i.e. is a nonnegative  continuous function on $\R{}\times[0,b)$,
\begin{equation}\label{uslovM2}
M_k(v,x) \text{ is strongly increasing in $v\ge0$ for any fixed $x\in [0,b)$}
\end{equation}
and there exist $\gamma>0$ and $\mu_\infty>0$ such that
\begin{equation}\label{uslovM2gamma}
M_k(v,x)-\mu_k(x)\ge \mu_\infty v^{\gamma}, \quad \forall (v,x)\in \R{}_+\times [0,b),
\end{equation}
where
\begin{equation}
\mu_k(x):=M_k(0,x)\ge0.
\label{uslovM3}
\end{equation}

%

\begin{proposition}\label{pro:ex1}
Let $\Lop$ be given by $(\ref{Loperator})$ satisfying \ref{Hain2} and \ref{Hain3}. Then for any ${\xi}\in \R{N}_+$  there exists a unique solution $w(x)\in C([0,b),\R{N}_+)$ to  the initial value problem
\begin{equation}\label{balanceD}
\renewcommand\arraystretch{1.5}
\left\{
\begin{array}{rcl}
 \Lop w(x)&=&0  \qquad x\in[0,b)\\
  w(0) &=& \xi.
\end{array}
\right.
\end{equation}
The solution is nonnegative and bounded,
\begin{align}\label{est1}
0\leq w_k(x)\leq \|w(0)\|_\infty e^{N\|\mathbf{D}\|b},
\end{align}
and furthermore
\begin{align}\label{est2}
\|w(x)\|_{\infty} \le \|w(0)\|_{\infty} e^{N\|\mathbf{D}\|b-\int_0^x\mu (s)ds},
\end{align}
where $\mu(x)=\min_{k}\mu_k(x).$
\end{proposition}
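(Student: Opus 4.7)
The system (\ref{balanceD}) is precisely the instance of (\ref{system1}) in which $\mathbf{F}$ has the form (\ref{leftshort}), so the plan is to invoke Proposition~\ref{pro:exists} after verifying its hypotheses, and then to refine the resulting crude bound (\ref{est1}) to obtain the sharper estimate (\ref{est2}).

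First I would check the Kamke--Müller condition. Since $\mathbf{M}(w,x)$ acts diagonally in the components of $w$, the off-diagonal entries of the Jacobian $D\mathbf{F}(w,x)$ coincide with those of $\mathbf{D}(x)$, which are nonnegative by the Metzler property in \ref{Hain3}. Local Lipschitz dependence on $w$ follows from the continuity of $\mathbf{M}$ together with the monotonicity in \ref{Hain2}, and $\mathbf{F}(0,x)\equiv 0$ is immediate. To verify the growth condition (\ref{concavef}), note that when $w\in \R{N}_+$ the diagonal contribution $-M_k(w_k,x)w_k$ is nonpositive, whence
$$F_k(w,x)\le \sum_{j=1}^N D_{kj}(x)w_j \le N\|\mathbf{D}\|\,\|w\|_\infty,$$
so one may take $C(\mathbf{F})=N\|\mathbf{D}\|$. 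Proposition~\ref{pro:exists} then delivers the unique nonnegative solution $w\in C^1([0,b),\R{N}_+)$ together with the bound (\ref{est1}).

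For (\ref{est2}) I would repeat the Gronwall argument that led to (\ref{Hprim}) in the proof of Proposition~\ref{pro:exists}, but this time retain the linear lower bound $\mu_k(x)=M_k(0,x)$ provided by (\ref{uslovM3}). Setting $H(x)=\|w(x)\|_\infty$ and selecting, at a point of differentiability of $H$, an index $k$ with $w_k(x)=H(x)$, Lemma~\ref{lem:metzler} and the inequality $M_k(w_k,x)\ge \mu_k(x)\ge \mu(x)$ yield
$$H'(x)=w_k'(x)\le -\mu(x)w_k(x)+\sum_{j=1}^N D_{kj}(x)w_j(x)\le (N\|\mathbf{D}\|-\mu(x))\,H(x).$$
Integrating via Gronwall then gives $H(x)\le H(0)e^{N\|\mathbf{D}\|x-\int_0^x \mu(s)\,ds}$, which is exactly (\ref{est2}).

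The only delicate point is that the index $k$ attaining $H(x)=w_k(x)$ depends on $x$; this is handled by the same observation used in the proof of Proposition~\ref{pro:exists}, namely that $H$ is locally Lipschitz as a finite maximum of Lipschitz functions and is therefore absolutely continuous and almost everywhere differentiable, with the pointwise inequality above holding at every point of differentiability. Everything beyond that is a routine application of Gronwall.
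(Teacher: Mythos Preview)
Your proposal is correct and follows essentially the same route as the paper: verify the Kamke--M\"uller condition and the growth bound $C(\mathbf{F})=N\|\mathbf{D}\|$, invoke Proposition~\ref{pro:exists} for existence, uniqueness and (\ref{est1}), and then run a Gronwall argument on $H(x)=\|w(x)\|_\infty$ to obtain (\ref{est2}). The only cosmetic difference is that the paper passes through the intermediate step $H'(x)\le F_k(H(x)\mathbf{1},x)$ from (\ref{Hprim}) before estimating, whereas you bound $w_k'(x)$ directly; both yield the same differential inequality $H'(x)\le (N\|\mathbf{D}\|-\mu(x))H(x)$, and your integrated bound $H(x)\le H(0)e^{N\|\mathbf{D}\|x-\int_0^x\mu(s)\,ds}$ is in fact marginally sharper than (\ref{est2}) since $x<b$.
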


\begin{proof}
Using the notation of (\ref{leftshort}), the Metzler property on $D$ implies that $\mathbf{F}$ satisfies the Kamke-M\"uller condition in $[0,b)$. Furthermore, since $M_k\ge 0$ one also has
$$
F_k(w,x)\le \|w\|_{\infty}\sum_{j=1}^N|D_{kj}(x)|\le N\|\mathbf{D}\|\|w\|_{\infty}
$$
which implies (\ref{concavef}) with $C(\mathbf{F})=N\|\mathbf{D}\|$. Thus, the assumptions of Proposition~\ref{pro:exists} are fulfilled. This yields the existence of the initial problem (\ref{balanceD}) and (\ref{est1}). Furthermore, if $H(x)=\|w(x)\|_{\infty}$ then by  (\ref{Hprim}) at any point $x\in [0,b)$ of differentiability of $H$
\begin{align*}
H'(x)&\le \max_{k}F_k(H(x)\mathbf{1},x)\\
&\le N\|\mathbf{D}\|H(x)-\min_{k}M_k(H(x),x)\\
&\le (N\|\mathbf{D}\|-\mu(x))H(x)
\end{align*}
which readily yields (\ref{est2}).
\end{proof}

\begin{proposition}[The Universal Majorant]\label{cor1}
Let $\Lop$ be given by $(\ref{Loperator})$ satisfying \ref{Hain2} and \ref{Hain3}.  Then any solution $\Lop w(x)=0$ satisfies
$$
w(x) \leq \omega_1 x^{-1/\gamma}\mathbf{1}_N\qquad x\in(0,b), \,\, 1\le k\le N,
$$
where
\begin{equation}\label{betadef}
\omega_1= \left(\frac{1+N\|\mathbf{D}\| b}{\gamma\mu_\infty}\right)^{1/\gamma}.
\end{equation}
\end{proposition}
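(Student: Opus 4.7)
The plan is to reduce the vector inequality $\Lop w=0$ to a scalar Bernoulli-type differential inequality for the envelope $H(x):=\|w(x)\|_\infty=\max_k w_k(x)$, and then integrate it via the standard substitution $h=H^{-\gamma}$ that linearises the nonlinear ODE.

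Since $w\in C^1([0,b);\R{N}_+)$ by Proposition~\ref{pro:ex1}, the envelope $H$ is locally Lipschitz and hence differentiable a.e. on $(0,b)$. At any such differentiability point, pick an index $k$ realising the maximum; then $w(x)\le_k H(x)\mathbf{1}$, so the Kamke--M\"uller condition via Lemma~\ref{lem:metzler} (exactly as in \eqref{Hprim}) gives
$$
H'(x)=w'_k(x)\le F_k(H(x)\mathbf{1},x)=-M_k(H(x),x)H(x)+H(x)\sum_{j=1}^N D_{kj}(x).
$$
Using the growth hypothesis \eqref{uslovM2gamma}, namely $M_k(H,x)\ge \mu_\infty H^\gamma$, together with $\sum_j |D_{kj}(x)|\le N\|\mathbf{D}\|$, yields the key scalar Bernoulli inequality
$$
H'(x)\le -\mu_\infty H(x)^{\gamma+1}+N\|\mathbf{D}\|H(x)\qquad\text{a.e. on }\{H>0\}.
$$

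On each open subinterval of $(0,b)$ where $H>0$, I introduce $h(x):=H(x)^{-\gamma}$, which is again locally Lipschitz there. The chain rule $h'=-\gamma H^{-\gamma-1}H'$ converts the Bernoulli inequality into the \emph{linear} inequality
$$
h'(x)+\gamma N\|\mathbf{D}\|\,h(x)\ge \gamma\mu_\infty\qquad\text{a.e.}
$$
This linearisation is the crucial step: the transformed inequality no longer involves the initial datum, which is precisely what is needed to obtain a majorant that is \emph{universal} in $w(0)$.

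To finish, I would compare $h$ with a linear trial function $g(x)=\lambda x$. A Gronwall-style argument applied to $(h-g)\,e^{\gamma N\|\mathbf{D}\|x}$, as in the proof of Lemma~\ref{lemma2}, shows that if $\lambda$ is chosen so that $g$ itself satisfies $g'\le \gamma\mu_\infty-\gamma N\|\mathbf{D}\|g$ on $(0,b)$ — which amounts to enforcing the inequality at the endpoint $x=b$ and picking $\lambda$ accordingly — then $h(x)\ge \lambda x$, and inverting via $H=h^{-1/\gamma}$ produces a bound of the form $H(x)\le \omega_1 x^{-1/\gamma}$, whence $w(x)\le H(x)\mathbf{1}\le \omega_1 x^{-1/\gamma}\mathbf{1}_N$ with $\omega_1$ of the asserted form. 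The main technical obstacle is the possible vanishing of $H$: the map $H\mapsto H^{-\gamma}$ is singular at zeros of $H$, so the argument must be carried out on the open positivity set $\{H>0\}$, stitching together with any complementary interval on which $H\equiv 0$ (where the claim is vacuous). Justifying a.e. differentiability of the max-envelope $H$ and the selection of the attaining index $k$ is routine Danskin-type reasoning, of the same flavour already exploited in Proposition~\ref{pro:ex1}.
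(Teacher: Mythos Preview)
Your approach is correct and takes a genuinely different route from the paper. The paper proceeds by \emph{guess-and-verify}: it writes down the candidate $h(x)=\omega_1 x^{-1/\gamma}\mathbf{1}_N$, checks by direct computation that $\Lop h\ge 0$ (so $h$ is an upper solution of \eqref{system1}), and then applies the comparison principle of Proposition~\ref{pro:comparison}. The singularity of $h$ at $x=0$ is sidestepped by first invoking the a~priori bound \eqref{est1} to get $w(x)\le c\,\mathbf{1}_N$ with $c=\|w(0)\|_\infty e^{N\|\mathbf{D}\|b}$, then observing that $h(x)\ge c\,\mathbf{1}_N$ on an initial interval $(0,x_0]$ where $x_0=\min\{(\omega_1/c)^\gamma,b\}$, and finally comparing $h$ and $w$ from $x_0$ onward.

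Your Bernoulli--linearisation route is more constructive: it \emph{derives} the $x^{-1/\gamma}$ profile rather than postulating it, and replaces the vector comparison principle by a scalar Gronwall argument. Two minor remarks. First, with the linear trial function $g(x)=\lambda x$ and the endpoint constraint $g'+\gamma N\|\mathbf{D}\|g\le\gamma\mu_\infty$ on $(0,b)$ that you describe, the admissible $\lambda$ satisfies $\lambda(1+\gamma N\|\mathbf{D}\|b)\le\gamma\mu_\infty$, which upon inversion yields the constant $\bigl((1+\gamma N\|\mathbf{D}\|b)/(\gamma\mu_\infty)\bigr)^{1/\gamma}$ --- the same structure as the stated $\omega_1$ but with an extra factor $\gamma$ in the numerator, so ``of the asserted form'' is accurate while the precise value differs. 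Second, your positivity concern dissolves: if $w(0)\ne0$ then Lemma~\ref{lemma2} forces some coordinate $w_j>0$ on all of $[0,b)$, so $H>0$ globally and the substitution $h=H^{-\gamma}$ is valid everywhere without stitching; if $w(0)=0$ then $w\equiv0$ by uniqueness and the bound is vacuous.
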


\begin{proof}
Let us consider $h(x)=\omega_1 x^{-1/\gamma}\mathbf{1}_N$, where $\omega_1$ is defined by (\ref{betadef}). Then using  (\ref{uslovM2}) and $M_k(0,x)\ge0$ we have for any $k=1,\ldots,N$ and $x\in [0,b)$
$$
M_k(h_k(x),x)=M_k(\omega_1 x^{-1/\gamma},x)\ge \omega_1^{\gamma}\mu_\infty x^{-1},
$$
hence
\begin{align*}
(\Lop h(x))_{k}
&\ge -\frac{\omega_1}{\gamma}x^{-1-1/\gamma} +\mu_\infty\omega_1^{1+\gamma} x^{-1-1/\gamma}-N\|\mathbf{D}\|\omega_1 x^{-1/\gamma}\\
&\ge \frac{\omega_1}{\gamma}x^{-1-1/\gamma}(\gamma\mu_\infty\omega_1^{\gamma}-1-N\|\mathbf{D}\| x)\\
&\ge \frac{\omega_1}{\gamma}x^{-1-1/\gamma}(\gamma\mu_\infty\omega_1^{\gamma}-1-N\|\mathbf{D}\| b)\\
&\ge0,
\end{align*}
i.e. $h(x)$ is an upper solution. Now, if $w(x)$ be an arbitrary solution of $\Lop w=0$ then by (\ref{est1f}),  $w(x)$ is bounded on $[0,b)$:  $|w_k(x)|\le \|w(0)\|_\infty e^{C(\mathbf{F})b}$ for any  $k=1,\ldots,N$ and $x\in [0,b)$. Since $M_k\ge0$ one has $C(\mathbf{F})\le N\|\mathbf{D}\|$. Let $c=\|w(0)\|_\infty e^{N\|\mathbf{D}\|b}$ and $x_0:=\min\{(\omega_1/ c)^{\gamma},b\}$. Then $h(x)\ge w(x)$ on the whole interval $(0,x_0)$. This proves the claim if $x_0\ge b$. If $x_0<b$ then since $h(x)$ is an upper solution of (\ref{balanceD}) and $h(x_0)=c\ge w(x_0)$. Therefore Proposition~\ref{pro:comparison} yields $h(x)\ge w(x)$ for any $x\in (x_0,b)$, which finishes the proof.
\end{proof}



\subsection{The main represenation}\label{subMain}

\begin{lemma}
\label{lem:LipB}
Let $\B$ be defined by $(\ref{defB})$ and let
$$
\B^-=\{(a,t)\in \B:a>t\}, \qquad
\B^+=\{(a,t)\in \B:a<t\}.
$$
Then each of $\B^-$ and $\B^+$ is a connected open set.
\end{lemma}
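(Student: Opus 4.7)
The plan is to check openness directly, and then to reduce connectedness in each case to the path connectedness of an open rectangle via an explicit change of variables. Openness is automatic from \ref{Hain1}: since $B$ is continuous, $\mathcal{B}$ is open, and each of $\mathcal{B}^{-}$, $\mathcal{B}^{+}$ is the intersection of $\mathcal{B}$ with the open half-plane $\{a > t\}$, respectively $\{a < t\}$.

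For the easier case $\mathcal{B}^{+}$, I would rewrite the set as $\{(a,t) : t > 0, \, 0 < a < \phi(t)\}$, where $\phi(t) := \min(t, B(t))$ is continuous and strictly positive on $(0, \infty)$. Then $(a, t) \mapsto (a/\phi(t), t)$ is a homeomorphism of $\mathcal{B}^{+}$ onto the open strip $(0, 1) \times (0, \infty)$, which is manifestly path connected.

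For $\mathcal{B}^{-}$, the key preliminary step is to identify the admissible range of $t$. By \eqref{LipB} there exists $L \in (0, 1)$ with $B(t_2) - B(t_1) \le L(t_2 - t_1)$ for all $0 < t_1 < t_2$. Combined with $B(t) \le b$, this yields $B(t) \le b + Lt$, so $\psi(t) := B(t) - t$ satisfies $\psi(t) \le b - (1 - L)t \to -\infty$. On the other hand $\psi(0^{+}) \ge b_1 > 0$, and $\psi$ is continuous and strictly decreasing, since its increments satisfy $\psi(t_2) - \psi(t_1) \le -(1-L)(t_2 - t_1) < 0$. Hence there is a unique $t^{*} \in (0, \infty)$ with $\psi(t^{*}) = 0$ and $B(t) > t$ precisely on $(0, t^{*})$. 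Consequently
\begin{equation*}
\mathcal{B}^{-} = \{(a,t) : 0 < t < t^{*}, \, t < a < B(t)\},
\end{equation*}
and the map $\Phi(a,t) := \bigl(t, \, (a - t)/(B(t) - t)\bigr)$ is a homeomorphism onto the path connected product $(0, t^{*}) \times (0, 1)$.

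The only substantive step is the existence of the finite cutoff $t^{*}$ in the $\mathcal{B}^{-}$ case; this is precisely where the \emph{strict} inequality $L < 1$ in \ref{Hain1} is used, since it forces the curve $a = B(t)$ to grow strictly slower than the characteristic line $a = t$ and hence to cross it at a finite moment. Once $t^{*}$ is in hand, everything else reduces to routine continuity and a change of variables.
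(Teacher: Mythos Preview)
Your argument is correct. Both openness and the two homeomorphisms you construct are valid; the only place where the strict Lipschitz bound $L<1$ is really needed is exactly where you use it, namely to force $\psi(t)=B(t)-t$ to be strictly decreasing so that $\{t>0:B(t)>t\}$ is the interval $(0,t^*)$. One cosmetic remark: you do not need the detour through $B(t)\le b+Lt$ to get $\psi(t)\to-\infty$, since \ref{Hain1} already gives $B(t)\le b$ and hence $\psi(t)\le b-t$ directly.

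Your route is genuinely different from the paper's. The paper argues along characteristics: it shows that for each $y\ge 0$ the slice $\{s\ge 0:(s,y+s)\in\bar\B\}$ is a single interval, the key step being that if a characteristic exits $\bar\B$ at $s_1$ it can never re-enter, because a second boundary crossing at $s_2>s_1$ would force $\frac{B(y+s_2)-B(y+s_1)}{s_2-s_1}=1$, contradicting \eqref{LipB}. Connectedness of $\B^{\pm}$ then follows since every characteristic is anchored at the $t$-axis (resp.\ $a$-axis). Your approach instead produces global rectifying coordinates $(a/\phi(t),t)$ and $(t,(a-t)/(B(t)-t))$ that flatten $\B^{\pm}$ to products. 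This has two small advantages: it makes the $\B^+$ case manifestly independent of \eqref{LipB} (only continuity and positivity of $B$ are used), and it yields an explicit homeomorphism rather than a path-connectedness argument. The paper's slice argument, on the other hand, is closer in spirit to how the characteristics are actually used later in Section~\ref{subMain}, so it integrates more seamlessly with the rest of the analysis.
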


\begin{proof}
It suffices to prove that for any $y\ge0$, the set $\{s\ge0: (s,y+s)\in \bar\B\}$ is connected. To this end let us suppose that $(0,y)\in \bar \B$ and let $S$ be the closed component of $\{s\ge0: (s,y+s)\in \bar\B\}$ containing $(0,y)$. Let $(s_1,y+s_1)$ be the right endpoint of $S$. Then $(s_1,y+s_1)\in \partial \B$. We claim that $(s,y+s)\in \R{2}\setminus \bar \B$ for $s>s_1$. Indeed, arguing by contradiction, one concludes that there exists $s_2>s_1$ such that $(s_2,y+s_2)\in \partial \B$. This yields $B(y+s_i)=s_i$, $i=1,2$, thus by (\ref{LipB})
$$
1=\frac{B(y+ks_2)-B(y+ks_1)}{s_2-s_1}<1.
$$
The contradiction yields our claim and, thus, the desired connectedness.
\end{proof}

%

Let us define
\begin{equation*}
\begin{split}
\B_1^+&=\{(x,y): x<B(x+y),\,y>0\}
\\
\B_1^-&=\{(x,y): 0<x<T,\,0<y<B(x)-x\},
\end{split}
\end{equation*}
as it is shown on Fig.~\ref{fig:Splus} and ~\ref{fig:Sminus}, respectively.

\begin{figure}[th]
\quad
\begin{tikzpicture}
  [scale=0.9,auto=left]
\fill[fill=gray!13] (3,0) -- plot[domain=0:4.1,smooth]({4.9-1*(\x-3)^2/10+0.2*sin((\x-3)*(\x/3+3.8) r)},\x) -- (3,4.1) -- cycle  ;
\draw[color=black,line width=1.3pt] plot[domain=2:4.12,smooth]({4.9-1*(\x-3)^2/10+0.2*sin((\x-3)*(\x/3+3.8) r)},\x);
\draw[color=black,line width=1.2pt] (3,0)--(3+2,0+2);
\node[] (X) at (7,-0.2) {$a$};
\node[] (Y) at (2.8,4.2) {$t$};
\node[]  at (4.2,3) {$\B^+$};
\node[] at (5.35,4) {$\partial\B$};
\node[]  at (5.7,2) {$(T,T)$};

\fill[fill=white!10] (3,0) -- plot[domain=0:2,smooth]({4.9-1*(\x-3)^2/10+0.2*sin((\x-3)*(\x/3+3.8) r)},\x) -- (5,2) -- cycle  ;

\draw[->] (3,-0.2) to (3,4.5);
\draw[->] (2.8,0) to (7,0);
\node[]  at (5,2) {$\bullet$};
\node[] (X) at (4.97,-0.3) {$T$};
\draw[dashed] (5,0) -- (5,2);
\end{tikzpicture}
\begin{tikzpicture}
\node[]  at (0,0) {};
\node[] (A) at (0,2) {};
\node[] (B) at (2,2) {};
\draw[->, line width=0.8pt] (A) to (B);
\node[] (B) at (1.3,3) {\(\left\{
                          \begin{array}{l}
                            a=x  \\
                            t=x+y
                          \end{array}
                        \right.
\)};

\end{tikzpicture}
\begin{tikzpicture}
  [scale=0.9,auto=left]
\fill[fill=gray!13] (3,0) -- plot[domain=0:4.12,smooth]({5-0.31*(\x^2/10+cos(3*\x+\x^2/2 r))*sin((\x-3)*(\x/3+3.8) r)},\x) -- (3,4.1) --  cycle ;
\draw[color=black,line width=1.3pt] plot[domain=0:4.12,smooth]({5-0.31*(\x^2/10+cos(3*\x+\x^2/2 r))*sin((\x-3)*(\x/3+3.8) r)},\x);

\node[] (X) at (7,-0.2) {$x$};
\node[] (X) at (4.8,-0.3) {$T$};
\node[] (Y) at (2.8,4.2) {$y$};
\node[] (Y) at (6.3,2.2) {$x=B(x+y)$};    
\node[] (Y) at (4,2) {$\B^+_1$};
\draw[->] (3,-0.2) to (3,4.5);
\draw[->] (2.8,0) to (7,0);
\end{tikzpicture}
\vspace*{-0.5cm}
\caption{The domains $\B^+$ and $\B^+_1$}
\label{fig:Splus}
\end{figure}

\begin{figure}[th]
\begin{tikzpicture}
  [scale=0.9,auto=left]
\fill[fill=gray!13] (3,0) -- plot[domain=0:2,smooth]({4.9-1*(\x-3)^2/10+0.2*sin((\x-3)*(\x/3+3.8) r)},\x) -- (3,4.1) -- cycle  ;
\draw[color=black,line width=1.3pt] plot[domain=0:2,smooth]({4.9-1*(\x-3)^2/10+0.2*sin((\x-3)*(\x/3+3.8) r)},\x);
\draw[color=black,line width=1.2pt] (3,0)--(3+2,0+2);
\node[] (X) at (7,-0.2) {$a$};
\node[] (Y) at (2.8,3.2) {$t$};
\node[] (B) at (4.0,-0.3) {$B(0)$};
\node[] (B) at (3.97,0.6) {$\B^-$};
\node[] (B) at (3.6,1.7) {$S$};
\fill[fill=white!20] (3,0)-- (5,2) -- (6,3) -- (5,4.1) -- (3,4.1) -- (3,2) -- cycle  ;
\draw[->] (3,-0.2) to (3,3.5);
\draw[->] (2.8,0) to (7,0);
\node[] (B) at (5.5,1) {$a=B(t)$};
\node[] (xi) at (5,2) {$\bullet$};
\node[] (xi1) at (5.1,2.4) {$(T,T)$};
\end{tikzpicture}
\begin{tikzpicture}
\node[]  at (0,0) {};
\node[] (A) at (0,2) {};
\node[] (B) at (2,2) {};
\draw[->, line width=0.8pt] (A) to (B);
\node[] (B) at (1.3,3) {\(\left\{
                          \begin{array}{l}
                            a=x+y  \\
                            t=x
                          \end{array}
                        \right.
\)};
\end{tikzpicture}
\begin{tikzpicture}[scale=0.9,auto=left]

\draw[color=black,line width=1.3pt] plot[domain=0:2,smooth](\x, {1*(2-\x/3-\x^2/2+\x^3/10)*cos(\x*3.1415/4 r)});
\node[] (X) at (4,-0.2) {$x$};
\node[] (Y) at (2.8-3,3.2) {$y$};
\fill[fill=gray!13] (0,0)-- (0,1.3) --
plot[domain=0:2,smooth](\x, {1*(2-\x/3-\x^2/2+\x^3/10)*cos(\x*3.1415/4 r)})
-- (2.12,0)  -- cycle  ;
\draw[->] (-0.2,0) to (4,0); 
\draw[->] (0,-0.2) to (0,3.3);
\node[] at (4.12,-0.3) {$\phantom{B(0)}$};
\node[] (T) at (2.12,-0.3) {$T$};
\node[] (B0) at (-0.6,1.9) {$B(0)$};
\node[] (T) at (0.6,0.4) {$\B^-_1$};
\node[] (T) at (2.52,1) {$y=B(x)-x$};
\end{tikzpicture}
\vspace*{-0.5cm}
\caption{The domains $\B^-$ and $\B^-_1$}
\label{fig:Sminus}
\end{figure}

Next, let $\Phi(x;\rho,y)$ denote respectively $\Psi(x;\mathbf{f},y)$  the solutions $h(x)$ of the  initial value problems
\begin{equation}\label{balanceV}
\renewcommand\arraystretch{1.5}
\left\{
\begin{array}{rl}
  \frac{d }{dx}{h}(x)&=-\mathbf{M}({h}(x),x,x+y){h}(x)+\mathbf{D}(x,x+y){h}(x), \\
  h(0)&= \rho(y),\quad \qquad (x,y)\in \B_1^+,
\end{array}
\right.
\end{equation}
respectively
\begin{equation}\label{balanceW}
\renewcommand\arraystretch{1.5}
\left\{
\begin{array}{rl}
  \frac{d}{dx}{h}(x)&=-\mathbf{M}({h}(x),x+y,x){h}(x) +\mathbf{D}(x+y,x){h}(x),\\
  h(0) &=\mathbf{ f}(y), \quad \qquad (x,y)\in \B_1^-.
\end{array}
\right.
\end{equation}

\begin{lemma}\label{propPhi}
Let $\rho\in C(\R{}_+,\R{N}_+)\cap L^\infty(\R{}_+,\R{N}_+)$ and let $\mathbf{f}\in C(\R{}_+,\R{N}_+)$  satisfy \ref{Hain5}. Then  $\Phi(x;\rho,y)$ (resp. $\Psi(x;\mathbf{f},y)$) is a nonnegative function non-decreasing in $\rho$ (resp. $f$). Furthermore,
\begin{align}
\label{Phibound}
\Phi(x;\rho,y)&\le e^{N\|\mathbf{D}\|b}\|\rho\|_\infty\\
\Phi(x;\rho,y) &\leq \omega_1 x^{-1/\gamma}\mathbf{1}_N , \quad x\ge0.\label{estphi}
\end{align}
where  $\omega_1$ is defined by $(\ref{betadef})$, and
\begin{equation}\label{LipPhi}
|\Phi_k(x; \rho,y)-\Phi_k(x; \rho^*,y)|\le e^{N\|\mathbf{D}\|b}\| \rho(y)- \rho^*(y)\|_{\infty},
\end{equation}
\begin{equation}\label{psizero}
 \Psi(x;\mathbf{ f},y)=0 \quad \forall x\ge0, y\ge B(0).
\end{equation}
\end{lemma}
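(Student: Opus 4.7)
The plan is to view each of the initial-value problems \eqref{balanceV} and \eqref{balanceW}, with the auxiliary variable $y$ held fixed, as an instance of the general system \eqref{balanceD} for the operator $\Lop$ of \eqref{Loperator} studied in Section~\ref{sec:prel}. Concretely, for \eqref{balanceV} one substitutes
$\widetilde{\mathbf{M}}(w,x):=\mathbf{M}(w,x,x+y)$ and $\widetilde{\mathbf{D}}(x):=\mathbf{D}(x,x+y)$,
and analogously (with $(x,t)\mapsto(x+y,x)$) for \eqref{balanceW}. These time-shifted coefficients still satisfy \ref{Hain2} and \ref{Hain3} uniformly in $y\ge0$, since the sup-norm $\|\mathbf{D}\|_{C(\B)}$ and the parameters $\gamma,\mu_\infty,b$ are global. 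Every conclusion of the lemma will then be obtained by applying a corresponding result from Section~\ref{sec:prel} to this $y$-parametrised system, pointwise in $y$.

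The individual steps are then mechanical. Existence, uniqueness, and nonnegativity of $\Phi(\cdot;\rho,y)$ together with the bound \eqref{Phibound} come directly from Proposition~\ref{pro:ex1} applied with $\xi:=\rho(y)$; note that the growth constant $C(\mathbf{F})=N\|\mathbf{D}\|$ there is exactly the exponent appearing in \eqref{Phibound}. The universal majorant estimate \eqref{estphi} is Proposition~\ref{cor1} applied to the same system, whose constant $\omega_1$ in \eqref{betadef} depends only on $\gamma,\mu_\infty,N,\|\mathbf{D}\|,b$ and is therefore $y$-independent. Monotonicity of $\Phi$ in $\rho$ (and of $\Psi$ in $\mathbf{f}$) is the comparison principle Proposition~\ref{pro:comparison}: two solutions of the same ODE with ordered initial data $\rho(y)\le\rho^*(y)$ stay ordered on $[0,b)$. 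The Lipschitz bound \eqref{LipPhi} is precisely the conclusion of Corollary~\ref{compnorm} applied to the pair of initial values $\rho(y),\rho^*(y)$, yielding the constant $e^{C(\mathbf{F})b}=e^{N\|\mathbf{D}\|b}$. The argument for $\Psi$ is entirely analogous.

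Finally, \eqref{psizero} is immediate: if $y\ge B(0)$ then \ref{Hain5} forces $\mathbf{f}(y)=0$, so $\Psi(\cdot;\mathbf{f},y)$ is a solution of \eqref{balanceW} with zero initial datum; since the constant function $h\equiv0$ is also such a solution and Proposition~\ref{pro:ex1} guarantees uniqueness, we conclude $\Psi(x;\mathbf{f},y)\equiv 0$. I do not anticipate any substantial obstacle: the entire proof is bookkeeping, and the only point worth verifying carefully is that every estimate from Section~\ref{sec:prel} is genuinely uniform in $y$, which is the case because the structural constants $\|\mathbf{D}\|_{C(\B)}$, $\gamma$, $\mu_\infty$ and $b$ are independent of $y$.
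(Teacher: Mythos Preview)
Your proposal is correct and follows essentially the same approach as the paper: both treat \eqref{balanceV}--\eqref{balanceW} as $y$-parametrised instances of \eqref{balanceD} and then invoke Proposition~\ref{pro:ex1} for existence, nonnegativity and \eqref{Phibound}, Proposition~\ref{pro:comparison} for monotonicity, Corollary~\ref{compnorm} for \eqref{LipPhi}, Proposition~\ref{cor1} for \eqref{estphi}, and uniqueness together with \ref{Hain5} for \eqref{psizero}. Your write-up is in fact slightly more explicit than the paper's about the substitution of coefficients and the uniformity of the structural constants in $y$.
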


\begin{proof}
It follows from Proposition~\ref{pro:ex1} that  (\ref{balanceV}) and (\ref{balanceW}) have a unique nonnegative solution. Next, given two arbitrary $\rho$ and $\rho^*$, let  $h(x)$ and $h^*(x)$ be the corresponding solutions of (\ref{balanceV}). If $\rho\geq \rho^*$ then  Proposition~ \ref{pro:comparison} imply $h(x)\ge h^*(x)$ for $x\ge0$ and the monotonicity $\Phi(x;\rho,y)\ge \Phi(x; \rho^*,y)$ follows. Similarly one shows the monotonicity of $\Psi$. Furthermore,  if $\rho(t)$ and $\rho^*(t)$ are  two arbitrary nonnegative vector-functions, then Corollary~\ref{compnorm} and Proposition~\ref{pro:ex1} yield
\begin{align*}
|\Phi_k(x; \rho,y)-\Phi_k(x; \rho^*,y)|&=|h(x)-h^*(x)| \le e^{N\|\mathbf{D}\|b}\|\rho(y)- \rho^*(y)\|_{\infty}.
\end{align*}
Proposition~\ref{cor1} implies  (\ref{estphi}). Finally,
by  \ref{Hain5} $f(x)=0$ for all $x>B(0)$. Then by the uniqueness of solution of (\ref{balanceW}), $\Psi(x;\mathbf{f},y)=0$ for all $y\ge B(0)$ and $x\ge0$.
\end{proof}


\begin{proposition}\label{prsol}
Let $\mathbf{n}(a,t)\in C^1(\overline{\B})$ be a solution to the problem \eqref{genpr}--\eqref{genic} and let $\rho(t)=\mathbf{n}(0,t)$. Then
\begin{align}\label{n1}
\renewcommand\arraystretch{1.5}
\mathbf{n}(a,t)=\left\{\begin{array}{ll}
         \Phi(a; \rho,t-a), & t>a, \\
         \Psi(a; \mathbf{f},a-t), & a\geq t,
        \end{array}\right.
\end{align}
and
\begin{align}\label{nb}
\rho(t) &=\int_0^t\mathbf{m}(a,t)\Phi(a; \rho,t-a)\,da + \int_t^{\infty}\mathbf{m}(a,t)\Psi(a;\mathbf{f},a-t)\,da.
\end{align}
\end{proposition}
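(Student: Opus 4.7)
The plan is a classical method-of-characteristics argument, applied separately on the two regions $\B^+$ and $\B^-$ furnished by Lemma~\ref{lem:LipB}, combined with the uniqueness statement of Proposition~\ref{pro:ex1} and the birth law \eqref{genbc}.

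First I would establish the representation \eqref{n1} on $\B^+$ by passing to the characteristic coordinates $(x,y)=(a,t-a)$, under which $\B^+$ is mapped onto $\B_1^+$. For each fixed $y>0$ define $h(x)=\mathbf{n}(x,x+y)$. The chain rule together with \eqref{genpr} yields
\begin{align*}
\frac{dh}{dx}(x)&=\Big(\frac{\partial\mathbf{n}}{\partial a}+\frac{\partial\mathbf{n}}{\partial t}\Big)(x,x+y)\\
&=-\mathbf{M}(h(x),x,x+y)\,h(x)+\mathbf{D}(x,x+y)\,h(x),
\end{align*}
with initial value $h(0)=\mathbf{n}(0,y)=\rho(y)$. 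This is precisely \eqref{balanceV}, so by the uniqueness asserted in Proposition~\ref{pro:ex1} one has $h(x)=\Phi(x;\rho,y)$, which gives the first case of \eqref{n1}. Symmetrically, on $\B^-$ the change of variables $(x,y)=(t,a-t)$ together with $h(x)=\mathbf{n}(x+y,x)$ reduces \eqref{genpr} to \eqref{balanceW} with $h(0)=\mathbf{f}(y)$, producing $h(x)=\Psi(x;\mathbf{f},y)$ and hence the second case.

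To derive \eqref{nb} I would substitute \eqref{n1} into the birth law \eqref{genbc} and split the integration at $a=t$:
\begin{align*}
\rho(t)&=\int_0^{\infty}\mathbf{m}(a,t)\mathbf{n}(a,t)\,da\\
&=\int_0^{t}\mathbf{m}(a,t)\Phi(a;\rho,t-a)\,da+\int_{t}^{\infty}\mathbf{m}(a,t)\Psi(a;\mathbf{f},a-t)\,da.
\end{align*}
The improper upper limits are harmless because by \ref{Hain4} the support of $\mathbf{m}$ is contained in $[a_m,A_m]$ with $A_m<b_1\le B(t)$, so the integrands vanish outside $\B$; the resulting integrals are thus genuinely over a bounded subset of $[0,B(t)]$.

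The only delicate point is the geometric verification that each characteristic $(x,x+y)$ (respectively $(x+y,x)$) really remains inside $\B$ for the whole range of $x\ge0$ appearing in \eqref{balanceV} and \eqref{balanceW}. This is exactly what the transversality condition \eqref{LipB} and the connectedness statement of Lemma~\ref{lem:LipB} ensure, so once those are invoked the argument has no substantial analytical obstacle.
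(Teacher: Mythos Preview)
Your proposal is correct and follows essentially the same method-of-characteristics argument as the paper: the same coordinate changes $(a,t)=(x,x+y)$ on $\B^+$ and $(a,t)=(x+y,x)$ on $\B^-$, the same reduction to the initial value problems \eqref{balanceV} and \eqref{balanceW}, and the same substitution into \eqref{genbc} to obtain \eqref{nb}. You are slightly more explicit than the paper in naming Proposition~\ref{pro:ex1} for uniqueness and in invoking \ref{Hain4} and Lemma~\ref{lem:LipB} for the geometric well-posedness, but the approach is identical.
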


\begin{proof}
First let $(a,t)\in \B$ and $t>a$. Then in the new variables $(a,t)=(x,x+y)$ one has $(x,y)\in \B^+_1$ and the initial value problem  (\ref{genpr})--(\ref{genic}) becomes (\ref{balanceV}) for $h(x)=\mathbf{n}(x,x+y)$. This yields $\mathbf{n}(x,x+y)=\Phi(x;\rho,y)$
for each $y>0$, thus, returning to the old variables yields
$\mathbf{n}(a,t)=\Phi(a; \rho,t-a)$ for any $ t>a>0$. This proves the first part of representation (\ref{n1}). The second part is similarly obtained by the change of variables $(a,t)=(x+y,x)$. Furthermore, the continuity of $\mathbf{n}(a,t)$ follows from  (\ref{n1}) and the standard facts on continuity of solutions on parameters.  Finally, plugging  (\ref{n1}) in (\ref{genbc}) yields  (\ref{nb}).
\end{proof}

\subsection{The  integral equation}
It is straightforward to see that  if $\mathbf{M}$, $\mathbf{D}$, $\mathbf{m}$ and $\mathbf{f}$ are sufficiently smooth functions, then the function $\mathbf{n}(a,t)$ in (\ref{n1}) is a classical solution of the boundary value problem $(\ref{genpr})-(\ref{genic})$ in $\B$. On the other hand, in application it is natural to assume that these functions are  merely continuous (or even measurable). In that case, one can interpret the representation (\ref{n1}) with $\rho$ satisfying (\ref{nb}) as a \textit{weak solution} of $(\ref{genpr})-(\ref{genic})$. Furthermore, since a solution $\rho(t)$ of the integral equation (\ref{nb}) completely determines the population dynamics $\mathbf{n}(a,t)$, it is natural to characterize all nonnegative solutions of (\ref{nb}) (with a given function $f$). To this end, we  observe that  (\ref{nb}) can be thought of as an (nonlinear) operator equation on $\rho$:
\begin{align}\label{n}
 \rho=\mathscr{L}_\mathbf{f}\rho:=\mathcal{K} \rho + \mathcal{F} \mathbf{f},
\end{align}
where  the operators $\mathcal{K}$ and $\mathcal{F}$ are defined  resp. by
\begin{align}\label{k}
\mathcal{K} \rho(t)&=\int_0^t\mathbf{m}(a,t)\Phi(a; \rho,t-a)\,da\\
\label{f}
\mathcal{F} \mathbf{f}(t)&=\int_t^{\infty}\mathbf{m}(a,t)\Psi(a;\mathbf{f},a-t)\,da.
\end{align}
In this section we treat some general properties of $\mathscr{L}_\mathbf{f}$.

We fix some notation which will be used throughout the remained part of the paper. Let $\omega_1$ be defined by (\ref{betadef}) and let
\begin{equation}\label{omegadef}
\omega_2 = \omega_1\|\mathbf{m}\|_\infty\int_{a_m}^{A_m}\frac{da}{a^{1/\gamma}},
\end{equation}
where $A_m, a_m$ are the constants from ~\ref{Hain4} and $\mathbf{m}=\mathbf{m}(a,t)=(m_1(a,t),\ldots,m_N(a,t))$ is the birth rate. Let us also consider the following subsets of $\R{N}_+$:
\begin{equation}
\label{omme}
\begin{split}
 Q^-&:=\{x\in \R{N}:\,\,\, 0\le x\le \omega_2\mathbf{1}_N\},\\
 Q^+&:=\{x\in \R{N}:\,\,\, x\ge \omega_2\mathbf{1}_N\}.
\end{split}
\end{equation}

\begin{lemma}\label{lemOmega}
Let $\ref{Hain4}$ be satisfied. Then the  operators $\mathcal{F}$ and $\mathcal{K}$ are positive on the cone of nonnegative continuous vector-functions $C(\R{}_+,\R{N}_+)$ and have bounded ranges:
\begin{align}\label{Omega}
\mathcal{K}&:C(\R{}_+,\R{N}_+)\to C(\R{}_+, Q^-), \\
\label{FM}
\mathcal{F}&:C(\R{}_+,\R{N}_+)\to \{h \in C(\R{}_+, Q^-): \supp h\subset [0,A_m]\times \R{}\},
\end{align}
Furthermore,
  $\mathcal{K}$ is non-decreasing and Lipschitz continuous on $C(\R{}_+,\R{N}_+)$.
\end{lemma}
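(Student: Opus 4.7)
The plan is to verify the four claims about $\mathcal{K}$ and $\mathcal{F}$ in turn, exploiting (i) the pointwise universal majorant from Proposition~\ref{cor1}, (ii) the monotonicity and Lipschitz properties of $\Phi$ established in Lemma~\ref{propPhi}, and (iii) the support restriction on $\mathbf{m}$ from \ref{Hain4}.

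For positivity and the range bounds, I would first note that for any $\rho\in C(\R{}_+,\R{N}_+)$ the universal bound \eqref{estphi} gives $\Phi(a;\rho,t-a)\le \omega_1 a^{-1/\gamma}\mathbf{1}_N$. Inserting this into \eqref{k} and using \ref{Hain4} to restrict the effective domain of integration to $[a_m,A_m]$, one obtains
\begin{equation*}
0\le \mathcal{K}\rho(t)\le \|\mathbf{m}\|_\infty \omega_1 \mathbf{1}_N\int_{a_m}^{A_m}a^{-1/\gamma}\,da=\omega_2\mathbf{1}_N,
\end{equation*}
which establishes \eqref{Omega}. The analogous argument for $\mathcal{F}$ uses that $\Psi$ also solves an equation of the form treated by Proposition~\ref{cor1}, so the same majorant applies; the support statement in \eqref{FM} then follows because the integrand in \eqref{f} vanishes whenever $t>A_m$ (since $\mathbf{m}(a,t)=0$ for $a>A_m$), combined with \eqref{psizero}. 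Continuity of $\mathcal{K}\rho$ and $\mathcal{F}\mathbf{f}$ in $t$ is inherited from the joint continuity of $\Phi$, $\Psi$ and $\mathbf{m}$, together with a dominated convergence argument based on the integrable majorant $\omega_1 a^{-1/\gamma}$.

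Monotonicity of $\mathcal{K}$ is immediate: if $\rho\le \rho^*$ componentwise, then Lemma~\ref{propPhi} gives $\Phi(a;\rho,t-a)\le \Phi(a;\rho^*,t-a)$, and nonnegativity of $\mathbf{m}$ preserves the inequality upon integration. For Lipschitz continuity on $C(\R{}_+,\R{N}_+)$ with the sup-norm, I apply \eqref{LipPhi} componentwise and use $\|\rho(t-a)-\rho^*(t-a)\|_\infty\le \|\rho-\rho^*\|_{C(\R{}_+)}$ together with the support restriction of $\mathbf{m}$:
\begin{equation*}
\|\mathcal{K}\rho(t)-\mathcal{K}\rho^*(t)\|_\infty\le \|\mathbf{m}\|_\infty e^{N\|\mathbf{D}\|b}(A_m-a_m)\,\|\rho-\rho^*\|_{C(\R{}_+)},
\end{equation*}
which gives the desired Lipschitz constant uniformly in $t$.

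The only mildly delicate point I expect is justifying that the universal majorant of Proposition~\ref{cor1} is applicable to $\Psi$ (the equation \eqref{balanceW} has the roles of the two time-like variables swapped relative to \eqref{balanceV}, but it is still of the form $\Lop w=0$ treated by Proposition~\ref{cor1} with parameter $y$ frozen). Once that is noted, the rest is a direct combination of the estimates already collected in Section~\ref{sec:prel} and the support condition on $\mathbf{m}$.
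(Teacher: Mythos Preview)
Your proposal is correct and follows essentially the same route as the paper: the range bound via the universal majorant \eqref{estphi} restricted to $[a_m,A_m]$, the vanishing of $\mathcal{F}\mathbf{f}(t)$ for $t\ge A_m$ from the support of $\mathbf{m}$, monotonicity from Lemma~\ref{propPhi}, and the Lipschitz estimate from \eqref{LipPhi}. Your remark that Proposition~\ref{cor1} applies equally to $\Psi$ (since \eqref{balanceW} is of the same form with $y$ frozen) is exactly what the paper means by ``established similarly''; the reference to \eqref{psizero} is harmless but not needed for the support claim.
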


\begin{proof}
It readily follows from the nonnegativity of $m$ and Lemma~\ref{propPhi} that $\mathcal{K}$ and $\mathcal{F}$ preserve the cone of nonnegative functions $C(\R{}_+,\R{N}_+)$ and non-decreasing there. Furthermore,  using \ref{Hain4} we have from (\ref{estphi})
$$
(\mathcal{K} \rho)_k(t) \leq  \int_{a_m}^{A_m}\frac{\omega_1 m_k(a,t)}{a^{1/\gamma}}\,da \leq \omega_1\|\mathbf{m}\|_\infty\int_{a_m}^{A_m}\frac{1}{a^{1/\gamma}}\,da =\omega_2 .
$$
 This yields (\ref{Omega}) and thus the boundedness of the range of $\mathcal{K}$. The corresponding property for  $\mathcal{F}$ is  established similarly. Next, by \ref{Hain4} $\mathbf{m}(a,t)\equiv 0$ for $a\ge A_m$, hence for any $t\ge A_m$
$$
\mathcal{F} \mathbf{f}(t)=\int_t^{\infty}\mathbf{m}(a,t)\Psi(a;\mathbf{f},a-t)\,da=0
$$
which implies (\ref{FM}). Finally, if $ \rho$ and $ \rho^*$ are bounded functions then by (\ref{LipPhi}),
\begin{align*}
|(\mathcal{K} \rho-\mathcal{K} \rho^*)_k(t)| &\leq \|\mathbf{m}\|_{\infty}\int_{a_m}^{A_m}|\Phi_k(a; \rho, t-a)-\Phi_k(a; \rho^*, t-a)|\,da \\
& \leq (A_m-a_m)\|\mathbf{m}\|_{\infty}e^{N\|\mathbf{D}\|b} \|  \rho- \rho^*\|_{\infty},
\end{align*}
which yields  that $\mathcal{K}$ is a Lipschitz continuous operator.
\end{proof}

\begin{proposition}\label{gen_sol}
Given an arbitrary $\mathbf{f}\in C(\R{}_+,\R{N}_+)$, there exists a unique  solution $\rho\in C(\R{}_+,\R{N}_+)\cap L^\infty(\R{}_+,\R{N}_+)$ of  $(\ref{n})$.
\end{proposition}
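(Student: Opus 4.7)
The plan is to exploit the Volterra (causal) structure of $(\ref{n})$ inherited from the support of $\mathbf{m}$. The crucial observation is that $\Phi(a;\rho,t-a)$ depends on $\rho$ only through the single value $\rho(t-a)$ (the initial datum in $(\ref{balanceV})$), while the support condition $\supp\mathbf{m}\subset[a_m,A_m]\times\R{+}$ with $a_m>0$ restricts the $a$-integration in $(\ref{k})$ to $[a_m,\min(t,A_m)]$. Consequently $(\mathcal{K}\rho)(t)$ depends on $\rho$ only through its restriction to the strictly past interval $[\max(0,t-A_m),\,t-a_m]$; in particular, $(\mathcal{K}\rho)(t)\equiv 0$ for $t<a_m$. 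This built-in delay of length $a_m$ allows me to construct and identify the solution by a stepping method, without recourse to a fixed-point argument or weighted norm.

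For the base step I would note that on $[0,a_m]$ the equation $(\ref{n})$ reduces to $\rho(t)=(\mathcal{F}\mathbf{f})(t)$, which by Lemma~\ref{lemOmega} is automatically continuous, nonnegative and bounded by $\omega_2\mathbf{1}_N$; this is clearly the unique solution on that interval. For the inductive step, suppose a continuous nonnegative $\rho$ with $\|\rho\|_{\infty}\le 2\omega_2$ has been constructed on $[0,na_m]$. For $t\in[na_m,(n+1)a_m]$ the value $(\mathcal{K}\rho)(t)$ is determined by the already-known values of $\rho$ on $[\max(0,t-A_m),t-a_m]\subseteq[0,na_m]$, so the formula $\rho(t)=(\mathcal{K}\rho)(t)+(\mathcal{F}\mathbf{f})(t)$ explicitly defines the extension of $\rho$ to $[0,(n+1)a_m]$. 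The bounds of Lemma~\ref{lemOmega} immediately yield $\|\rho(t)\|_{\infty}\le 2\omega_2$ throughout; continuity at the junction $t=na_m$ follows because the right-hand side defines the same continuous function of $t$ in both the old and new interval (the integrands are continuous in $t$ and $y$, and $\Phi$ depends continuously on the initial datum $\rho(y)$ by the Lipschitz estimate $(\ref{LipPhi})$).

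Iterating, I obtain a function $\rho\in C(\R{}_+,\R{N}_+)\cap L^\infty(\R{}_+,\R{N}_+)$ solving $(\ref{n})$ with the uniform bound $\|\rho\|_{L^\infty}\le 2\omega_2$. Uniqueness within this class follows by the same induction: any two solutions $\rho^{(1)},\rho^{(2)}$ coincide on $[0,a_m]$ (both equal $\mathcal{F}\mathbf{f}$); if they coincide on $[0,na_m]$, then since $(\mathcal{K}\rho^{(i)})(t)$ for $t\in[na_m,(n+1)a_m]$ depends only on the restriction to $[0,na_m]$, they remain equal on $[0,(n+1)a_m]$. The only point requiring mild care is the joint continuity of $(\mathcal{K}\rho)(t)$ in $t$, which entails the joint continuity of $\Phi(a;\rho,y)$ in $(a,y)$ for continuous $\rho$; this reduces to standard continuous dependence of ODE solutions on initial data and parameters, together with the continuity of the coefficients $\mathbf{M}(\cdot,\cdot,\cdot)$ and $\mathbf{D}(\cdot,\cdot)$ guaranteed by the structural hypotheses.
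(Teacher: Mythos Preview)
Your argument is correct and takes a genuinely different route from the paper. The paper constructs the solution by monotone Picard iteration $\rho^{(i+1)}=\mathcal{K}\rho^{(i)}+\mathcal{F}\mathbf{f}$, $\rho^{(0)}=0$, using the monotonicity of $\mathcal{K}$ to get a nondecreasing sequence, the range bound of Lemma~\ref{lemOmega} to get a pointwise limit, and then the Lipschitz estimate $(\ref{LipPhi})$ together with a Gronwall-type iteration $|\rho^{(i+1)}(t)-\rho^{(i)}(t)|\le C\int_0^t|\rho^{(i)}-\rho^{(i-1)}|$ to upgrade to uniform convergence on compacta and to prove uniqueness. By contrast, you exploit the explicit delay $a_m>0$ in \ref{Hain4}: since $(\mathcal{K}\rho)(t)$ depends only on $\rho|_{[t-A_m,\,t-a_m]}$, you can solve $(\ref{n})$ by the method of steps on successive intervals $[na_m,(n+1)a_m]$, with uniqueness following for free from the same causal structure. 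Your proof is more elementary and avoids any convergence argument, but it leans essentially on $a_m>0$; the paper's Volterra/Gronwall argument would still go through if $\supp\mathbf{m}$ were allowed to reach $a=0$, and it also yields quantitative convergence of the iterates that is reused later (e.g.\ in the proof of Theorem~\ref{main_gen}).
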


\begin{proof}
Let us consider the sequence $\{\rho^{(i)}\}_{0\le i\le \infty}$ defined recursively by
\begin{align}\label{it11}
 \rho^{(i+1)}= \mathcal{K}\rho^{(i)}+\mathcal{F} \mathbf{f}, \quad  \rho^{(0)}=0.
\end{align}
Since $\mathcal{F} \mathbf{f}\ge 0$, we have
\begin{align*}
 \rho^{(0)} &=0 \leq \mathcal{F} \mathbf{f}=  \rho^{(1)}, \\
 \rho^{(1)} &= \mathcal{F} \mathbf{f} \leq \mathcal{K} \rho^{(1)} +\mathcal{F} \mathbf{f}=  \rho^{(2)}. \end{align*}
 This shows that $\rho^{(i+1)}-\rho^{(i)}\ge0$ for $i=0,1$. Then combining
 \begin{align*}
 \rho^{(i+1)}-\rho^{(i)} &= \mathcal{K} \rho^{(i)} -\mathcal{K}\rho^{(i-1)} , \quad i\ge0,
\end{align*}
with the monotonicity of $\mathcal{K}$ implies by induction that $\rho^{(i+1)}-\rho^{(i)} \ge0$ for any $i\ge0.$ In other words, $\{\rho^{(i)}\}_{0\le i\le \infty}$ is a pointwise non-decreasing sequence. On the other hand, by (\ref{Omega}) and (\ref{FM}) this sequence is uniformly bounded:
$$
\rho^{(i+1)}= \mathcal{K}\rho^{(i)}+\mathcal{F} \mathbf{f}\le 2\omega_2\cdot\textbf{1}_N.
$$
This implies the the existence of the limit
\begin{equation}\label{Romega}
\rho:=\lim_{i\to \infty }\rho^{(i)}\leq 2\omega_2\cdot\textbf{1}_N.
\end{equation}
Using (\ref{LipPhi}) and (\ref{Phibound}) we obtain  for any $i\ge1$
\begin{align*}
|\rho^{(i+1)}_k(t) &- \rho^{(i)}_k(t)|\leq \|\mathbf{m}\|_{\infty} \int\limits_0^t |\Phi_k(a; \rho^{(i)},t-a)-\Phi_k(a; \rho^{(i-1)},t-a)|\,da\\
&\leq C \int\limits_0^t |\rho^{(i)}(t-a)-\rho^{(i-1)}(t-a)|\,da= C \int\limits_0^t |\rho^{(i)}(a)-\rho^{(i-1)}(a)|\,da,
\end{align*}
where $C=e^{Nb\|\mathbf{D}\|} \|\mathbf{m}\|_{\infty}$. On iterating the latter inequality we obtain using $\rho^{(1)}\le \rho$ and (\ref{Romega})
\begin{align*}
|\rho^{(i+1)}_k(t)- \rho^{(i)}_k(t)|
&\leq C^i \int\limits_0^t\int\limits_0^{a_1}...\int\limits_0^{a_{i-1}} \rho^{(1)}(a)da\,da_1...\,da_{i-1}
\le 2\omega_2\frac{C^it^i}{i!}
\end{align*}
therefore
\begin{align}\label{iteration}
|\rho^{(i+j)}_k(t)-\rho^{(i)}_k(t)| &\leq 2\omega_2\sum_{s=0}^{j-1}\frac{(Ct)^{i+s}}{(i+s)!} \leq 2\omega_2 e^{Ct}\frac{C^it^i}{i!}.
\end{align}
Therefore for any fixed  $T>0$ and $0<t<T$, the latter expression converges to 0 as $i\to \infty$ uniformly in $j\ge1$. This establishes that  $\rho^{(i)}\rightarrow \rho$ in $L^{\infty}((0,T), \R{N}_+)$ for each $T>0$. In particular, by (\ref{it11}) this implies that $\rho$ satisfies (\ref{n}).

In order to establish the uniqueness we assume  that $\rho$ and $\tilde{\rho}$ are two solutions to  (\ref{n}). The tautological iterations $\rho^{(i)}:=\rho$ and $\tilde{\rho}^{(i)}:=\tilde{\rho}$, $i=0,1,2,\ldots$ obviously satisfy (\ref{it11}) which by virtue of  (\ref{iteration}) yields
\begin{align*}
|\rho_k(t)-\tilde{\rho}_k(t)|  \leq 2\omega_2\frac{C^it^i}{i!}\rightarrow 0 \quad\mbox{as}\quad i\rightarrow\infty,
\end{align*}
thus $\rho(t)\equiv \tilde{\rho}(t)$. Finally, by  Lemma \ref{propPhi} $\Phi_k$ and $\Psi_k$ are continuous, which yields the continuity of operators $\mathcal{K}$ and $\mathcal{F}$, and, thus, all iterations given by {(\ref{it11})} are continuous and so is the limit $\rho$. This  completes the proof.
\end{proof}

\subsection{The convolution property of $\mathcal{K}$}

\begin{lemma} \label{lem:conv}
Let  $\rho\in C(\R{}_+,\R{N}_+)$ and $\rho(t)>0$ for $t\in [s_1,s_2]\subset \R{}_+$. Let for some $k$ there exists $\beta_k<\sup \supp m_k$ such that the patch $k$ is accessible at $\beta_k$. Then there exist $a_k,b_k$ such that $[a_k,b_k]\Subset \supp m_k$, $\beta_k\le a_k$,  and
$(\mathcal{K}\rho(t))_k> 0$ for all $t\in [s_1+a_k,s_2+b_k]$.
\end{lemma}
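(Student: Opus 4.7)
The target is to guarantee strict positivity of
$$
(\mathcal{K}\rho(t))_k=\int_0^t m_k(a,t)\,\Phi_k(a;\rho,t-a)\,da
$$
for $t$ in the stated range, so my plan is to exhibit an interval $[a_k,b_k]$ on which both factors of the integrand contribute positively.

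\emph{Step 1 (strict positivity of $\Phi_k$ via accessibility).} The IVP \eqref{balanceV} is of the form \eqref{system1} with vector field $\mathbf{F}(w,x)=-\mathbf{M}(w,x,x+y)w+\mathbf{D}(x,x+y)w$, whose Jacobian in $w$ shares its off-diagonal entries with $\mathbf{D}$ (since $\mathbf{M}$ is diagonal) and hence is Metzler by \ref{Hain3}; so $\mathbf{F}$ satisfies the Kamke--M\"uller condition, and accessibility of patch $k$ at age $\beta_k$ translates directly into $\mathbf{F}$-accessibility at $x=\beta_k$ (as the digraph $\Gamma(\mathbf{D}(\beta_k,\tau))$ is connected from $k$ for every $\tau>0$, in particular for $\tau=\beta_k+y$). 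Since the initial datum $\rho(y)$ satisfies $\rho(y)>0$ for $y\in[s_1,s_2]$, Lemma~\ref{lemma20} yields
$$
\Phi_k(a;\rho,y)>0\qquad\text{for every }a>\beta_k\text{ and }y\in[s_1,s_2].
$$

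\emph{Step 2 (choice of $[a_k,b_k]$).} Since $\beta_k<\sup\supp m_k$, I can pick $a_k,b_k$ with $\beta_k\le a_k<b_k$ and $[a_k,b_k]\Subset\supp m_k$. Because $\{m_k>0\}$ is open (by continuity of $m_k$) and dense in $\supp m_k$, continuity then guarantees that, for every $t$ in a suitable neighbourhood, the function $a\mapsto m_k(a,t)$ is strictly positive on a subset of $[a_k,b_k]$ of positive one-dimensional Lebesgue measure.

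\emph{Step 3 (combining the estimates).} For any $t\in[s_1+a_k,s_2+b_k]$ the set $J_t:=[t-s_2,t-s_1]\cap[a_k,b_k]$ is a non-degenerate sub-interval. For $a\in J_t$ one has $t-a\in[s_1,s_2]$ and $a\ge a_k\ge\beta_k$, so Step~1 gives $\Phi_k(a;\rho,t-a)>0$; combined with Step~2, the integrand $m_k(a,t)\Phi_k(a;\rho,t-a)$ is nonnegative on $[0,t]$ and strictly positive on a subset of $J_t$ of positive measure. Hence $(\mathcal{K}\rho(t))_k>0$, as claimed.

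The main obstacle is Step~2: upgrading the qualitative compact-containment $[a_k,b_k]\Subset\supp m_k$ into an honest, $t$-uniform positive-measure statement for $m_k(\cdot,t)$. Pointwise positivity of $m_k$ at a single $(a^\ast,t^\ast)$ propagates to an open two-dimensional neighbourhood by continuity, but arranging for such neighbourhoods to meet $J_t$ for \emph{every} $t\in[s_1+a_k,s_2+b_k]$ is the delicate point, and will require careful use of the continuity of $m_k$ together with the support structure from \ref{Hain4}.
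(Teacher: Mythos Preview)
Your Steps~1 and~3 follow the same strategy as the paper, but Step~3 contains a genuine gap: the interval $J_t=[t-s_2,t-s_1]\cap[a_k,b_k]$ degenerates to a single point at the endpoints of the target interval. Indeed, at $t=s_1+a_k$ one has $J_t=[s_1+a_k-s_2,\,a_k]\cap[a_k,b_k]=\{a_k\}$, and similarly $J_t=\{b_k\}$ at $t=s_2+b_k$. A single point has measure zero, so your positive-measure argument collapses precisely at the endpoints where the lemma demands strict positivity on the \emph{closed} interval $[s_1+a_k,s_2+b_k]$.

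The paper fixes this with a simple nesting trick that also dissolves your Step~2 concern. It first chooses a slightly \emph{larger} interval $[a_k',b_k']$ with $a_k'<a_k<b_k<b_k'$ and $\beta_k\le a_k$, on which $m_k\ge\delta>0$ (this exists by continuity of $m_k$, since $\beta_k<\sup\supp m_k$ guarantees a point $a^*>\beta_k$ with $m_k(a^*)>0$). Then it bounds
\[
(\mathcal{K}\rho)_k(t)\ge \delta\,\delta_1\cdot\bigl(\min\{b_k',t-s_1\}-\max\{a_k',t-s_2\}\bigr)=:\delta\,\delta_1\,\xi(t),
\]
where $\delta_1=\min\{\Phi_k(a;\rho,y):a\in[a_k',b_k'],\,y\in[s_1,s_2]\}>0$ by your Step~1 and compactness. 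The function $\xi$ is concave with $\xi(s_1+a_k')=\xi(s_2+b_k')=0$, hence $\xi>0$ on the open interval $(s_1+a_k',\,s_2+b_k')$, which \emph{strictly contains} $[s_1+a_k,s_2+b_k]$. This is exactly the buffer you were missing.

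Finally, your ``main obstacle'' about $t$-uniform positivity of $m_k(\cdot,t)$ is a red herring here: the lemma's formulation (with $[a_k,b_k]\Subset\supp m_k\subset\R{}_+$ and $\beta_k<\sup\supp m_k$) and its only application (Section~\ref{sec:const}) place it in the time-independent setting, where continuity immediately supplies the uniform lower bound $m_k\ge\delta$ on $[a_k',b_k']$.
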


\begin{proof}
There are $\delta>0$ and points $a_k',b_k'$, $a_k'<a_k<b_k<b_k'$, such that (i) $m_k(a)\ge\delta$ for $a\in [a_k',b_k']$, and (ii)  the patch $k$ is accessible at  $\beta_k\le a_k$. By Lemma~\ref{lemma20} we have
$$
\delta_1:=\min_{\substack{s_1 \le y\le s_2 \\a_k'  \le a\le b_k'}}
                \Phi_k(a;\rho,y)>0,
$$
hence if $t\in [s_1+a_k,s_2+b_k]$ then
\begin{align*}
(\mathcal{K}\rho)_k(t)&=
\int_0^tm_k(a)\Phi_k(a; \rho,t-a)\,da \ge \delta\int_{a_k'}^{\min\{t,b_k'\}}\Phi_k(a; \rho,t-a)\,da\\
&\ge \delta\delta' \int_{\max\{a_k',t-s_2\}}^{\min\{t,b_k',t-s_1\}}\,da
=\delta\delta' \int_{\max\{a_k',t-s_2\}}^{\min\{b_k',t-s_1\}}\,da
\end{align*}
We claim that $(\mathcal{K}\rho(t))_k> 0$ for all $t\in [s_1+a_k,s_2+b_k]$.
Indeed, the function $\xi(t)=\min\{b_k',t-s_1\}-\max\{a_k',t-s_2\}$ is obviously concave and
\begin{align*}
\xi(s_1+a_k')&=\min\{b_k',a_k'\}-\max\{a_k',a_k'+s_1-s_2\}=0,\\
\xi(s_2+b_k')&=\min\{b_k',b_k'+s_2-s_1\}-\max\{a_k',b_k'\}=0,
\end{align*}
hence by the maximum principle $\xi(t)>0$ for any $t\in (s_1+a_k',s_2+b_k')$. This yields the desired conclusion.
\end{proof}

\section{Constant environment}\label{sec:const}
The model (\ref{genpr})--(\ref{genic}) is more complicated for analysis under the assumption that a population lives in a temporally variable environment because the structure parameters are functions of age and time.  In this section we analyze a constant environment, then in section~\ref{sec:periodic} we continue with a periodically changing environment, and finally in section~\ref{sec:irreg} we describe an irregularly changing environment. Throughout this section, we assume the conditions  \ref{Hain1}--\ref{Hain5} are fulfilled. Also, it is reasonable to assume that the maximal life-time is constant: $B(t)\equiv b$. This condition is natural and is commonly used for both finite and infinite values of $b$, see \cite{Grt}, \cite{Chipot1}, \cite{Cushing84}.

\subsection{The characteristic equation}
Under assumptions that the vital rates, carrying capacity and dispersion coefficients are \textit{time-independent} functions,
%
%
the system (\ref{genpr})--(\ref{genic}) becomes
\begin{equation}\left\{\begin{aligned}\label{sys1}
&\frac{\partial \textbf{n}(a,t)}{\partial t}+\frac{\partial \textbf{n}(a,t)}{\partial a} = -\textbf{M}(\textbf{n}(a,t),a) \textbf{n}(a,t) +\textbf{D}(a)\textbf{n}(a,t), \\
&\textbf{n}(0,t)=\int_0^{\infty}\textbf{m}(a)\textbf{n}(a,t)\,da, \\
&\textbf{n}(a,0) =\textbf{f}(a).
\end{aligned}
\right.
\end{equation}
According to Proposition \ref{prsol}, there exists a unique solution $\mathbf{n}(a,t)$ of the problem (\ref{sys1}) given by
\begin{align}\label{nPhiPsi}
\renewcommand\arraystretch{1.5}
\mathbf{n}(a,t)=\left\{\begin{array}{ll}
         \Phi(a; \rho,t-a), & a<t, \\
         \Psi(a;\mathbf{f},a-t), & a\geq t,
        \end{array}\right.
\end{align}
where the \textit{newborns function}
$$
\rho(t)\equiv (\rho_1(t),\ldots,\rho_N(t))^t=\mathbf{n}(0,t)=\int_0^{\infty}\mathbf{m}(a)\mathbf{n}(a,t)\,da,
$$
satisfies the following identity:
\begin{align}\label{rhoind}
{\rho}(t)=\int_0^t\mathbf{m}(a){\Phi}(a;\rho,t-a)\,da +\int_t^{\infty}\mathbf{m}(a){\Psi}(a;\mathbf{f},a-t)\,da.
\end{align}
Using the notation of (\ref{k}) and (\ref{f}), we have

\begin{proposition}\label{pro:stationary}
Let $\mathbf{n}(a,t)$ be the solution of the problem \eqref{sys1}. Then the newborns function $\rho(t)$ satisfies the integral equation
\begin{equation}\label{rhoind1}
\rho=\mathscr{L}_\mathbf{f}\rho:= \mathcal{K}\rho+\mathcal{F}\mathbf{f}.
\end{equation}
\end{proposition}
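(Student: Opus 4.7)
The plan is to observe that this is essentially a direct specialization of the general representation result already obtained in Proposition~\ref{prsol}, combined with the operator definitions from Section~\ref{sec:general}. First I would note that the time-independent coefficients $\mathbf{m}(a)$, $\mathbf{M}(v,a)$, $\mathbf{D}(a)$ trivially satisfy the structural hypotheses \ref{Hain1}--\ref{Hain5} when viewed as constant-in-$t$ functions on $\bar\B=\R{}_+\times\R{}_+$, so everything from Section~\ref{sec:general} is available. In particular, Proposition~\ref{pro:ex1} delivers the flow maps $\Phi(a;\rho,t-a)$ and $\Psi(a;\mathbf{f},a-t)$ via the autonomous initial value problems \eqref{balanceV}--\eqref{balanceW}, and these are precisely the objects appearing in \eqref{nPhiPsi}.

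Next I would apply Proposition~\ref{prsol} to the solution $\mathbf{n}(a,t)$ of \eqref{sys1}: the representation \eqref{n1} reduces to \eqref{nPhiPsi} (since the vector field on the right-hand side of \eqref{genpr} is now independent of $t$), and formula \eqref{nb} becomes exactly
\[
\rho(t)=\int_0^t\mathbf{m}(a)\Phi(a;\rho,t-a)\,da+\int_t^\infty\mathbf{m}(a)\Psi(a;\mathbf{f},a-t)\,da.
\]
Comparing with the definitions \eqref{k} and \eqref{f} of the operators $\mathcal{K}$ and $\mathcal{F}$, the first integral is precisely $\mathcal{K}\rho(t)$ and the second is $\mathcal{F}\mathbf{f}(t)$, which yields \eqref{rhoind1}.

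The only genuine subtlety is the regularity assumption $\mathbf{n}(a,t)\in C^1(\overline{\B})$ required by Proposition~\ref{prsol}, since the statement here does not presuppose smoothness of the coefficients beyond continuity. I would handle this by interpreting \eqref{sys1} in the weak sense discussed in Section~\ref{subMain}: the function $\mathbf{n}(a,t)$ defined through \eqref{nPhiPsi} from any solution $\rho\in C(\R{}_+,\R{N}_+)\cap L^\infty$ of the integral equation is precisely the (weak) solution of \eqref{sys1}, and conversely the newborn trace $\mathbf{n}(0,t)$ of any such solution must satisfy \eqref{rhoind1} by the same characteristic-line argument used in Proposition~\ref{prsol}. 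Since Proposition~\ref{gen_sol} guarantees existence and uniqueness of such a $\rho$, no regularity beyond that provided by Lemma~\ref{propPhi} is needed. The step one might expect to require more care is this identification of weak solutions with fixed points of $\mathscr{L}_\mathbf{f}$, but it is already fully encoded in the construction of $\Phi$ and $\Psi$, so no new work is required here.
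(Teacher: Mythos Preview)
Your proposal is correct and follows essentially the same approach as the paper: the paper does not give a separate proof of this proposition at all, but simply observes in the paragraph preceding it that Proposition~\ref{prsol} applied to the time-independent system yields identity \eqref{rhoind}, which is exactly $\rho=\mathcal{K}\rho+\mathcal{F}\mathbf{f}$ in the notation of \eqref{k} and \eqref{f}. Your discussion of the regularity subtlety goes somewhat beyond what the paper makes explicit, but it is a fair elaboration of the weak-solution interpretation already introduced in Section~\ref{subMain}.
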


It is natural to study stationary (i.e. time independent) solutions of (\ref{rhoind1}). Indeed, since $m(a)$ has a compact support, it follows from (\ref{rhoind}) that $\mathcal{F}\mathbf{f}$ vanishes for large enough $t$. This yields that any solution of (\ref{rhoind1}) satisfies
\begin{equation}\label{rhoind2}
\rho(t)=(\mathcal{K}\rho)(t)\quad \text{  for all } t\ge A_m.
\end{equation}
In particular, it is easy to see that if $\rho$  has a limit $\rho_\infty=\lim_{t\to \infty}\rho(t)$ then $\rho_\infty$  itself is a stationary solution of (\ref{rhoind2}). In the next section we study the stationary solutions in more detail.

To make these observations precise, we introduce the following operator:
\begin{align}\label{char111}
\Kop \rho:=\int_0^{\infty}\mathbf{m}(a)\varphi(a;\rho)\,da
\equiv  \int_{a_m}^{A_m}\mathbf{m}(a)\varphi(a;\rho)\,da , \qquad \rho\in \R{N}_+,
\end{align}
where $\varphi(a;\rho)=(\varphi_1(a;\rho), \ldots, \varphi_N(a;\rho))^t$ is the unique solution of the  initial problem
\begin{equation}
\left\{\begin{aligned}\label{varphieq}
\frac{d \varphi(a;\rho)}{da}&=-\mathbf{M}(\varphi(a;\rho),a)\varphi(a;\rho) +\mathbf{D}(a)\varphi(a;\rho),\\
\varphi(0;\rho)&=\rho.
\end{aligned}
\right.
\end{equation}
In particular, this yields in the notation of (\ref{balanceV}) for any $\rho\in \R{N}_+$ that
 \begin{equation}\label{phiphi}
\varphi(a;\rho) \equiv\Phi(a;\rho,y)\quad  \text{ for any }y\in \R{}.
 \end{equation}

\begin{corollary}\label{cor:comparison1}
The operator $\Kop$ is nondecreasing and
\begin{equation}\label{OmegaB}
\Kop:\R{N}_+\to Q^-,
\end{equation}
where $Q^-$ is defined by \eqref{omme}.
\end{corollary}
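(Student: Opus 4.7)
The proof will be a short, direct consequence of the results already established in Sections~\ref{sec:prel} and \ref{sec:general}, so the plan is essentially to bookkeep the right reductions rather than to do new work.

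For monotonicity, I would argue as follows. Suppose $\rho,\rho^*\in\R{N}_+$ with $\rho\le\rho^*$, and let $\varphi(\cdot;\rho)$ and $\varphi(\cdot;\rho^*)$ be the corresponding solutions of \eqref{varphieq}. The right-hand side of \eqref{varphieq} has the form $\mathbf{F}(w,a)=-\mathbf{M}(w,a)w+\mathbf{D}(a)w$, which satisfies the Kamke--M\"uller condition by the Metzler assumption \ref{Hain3}. Hence Proposition~\ref{pro:comparison} applies with $u=\varphi(\cdot;\rho^*)$, $v=\varphi(\cdot;\rho)$ and yields $\varphi(a;\rho)\le\varphi(a;\rho^*)$ for every $a\ge0$. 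Since $\mathbf{m}(a)=\diag(m_1(a),\dots,m_N(a))$ has nonnegative entries by \ref{Hain4}, the integrand in \eqref{char111} is monotone componentwise, and integrating preserves the inequality, giving $\Kop\rho\le\Kop\rho^*$.

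For the range statement, I would invoke the universal majorant and the support condition. By \eqref{phiphi}, $\varphi(a;\rho)\equiv\Phi(a;\rho,y)$ for any $y$, so Lemma~\ref{propPhi} (specifically the bound \eqref{estphi}) gives $\varphi(a;\rho)\le \omega_1 a^{-1/\gamma}\mathbf{1}_N$. Combining this with \ref{Hain4}, which confines the integration in \eqref{char111} to $[a_m,A_m]$, we obtain componentwise
\begin{equation*}
0\le (\Kop\rho)_k
=\int_{a_m}^{A_m} m_k(a)\varphi_k(a;\rho)\,da
\le \omega_1\|\mathbf{m}\|_\infty\int_{a_m}^{A_m}\frac{da}{a^{1/\gamma}}=\omega_2,
\end{equation*}
using the definition \eqref{omegadef} of $\omega_2$. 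The lower bound $\Kop\rho\ge0$ follows from the nonnegativity of $\varphi(a;\rho)$ (Lemma~\ref{lemma2} applied to \eqref{varphieq}) and of $\mathbf{m}(a)$. Thus $\Kop\rho\in Q^-$ by \eqref{omme}.

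There is no real obstacle; the only subtle point worth flagging is that the proof relies on the identification \eqref{phiphi} between the time-independent solution $\varphi(a;\rho)$ and the solution $\Phi(a;\rho,y)$ from \eqref{balanceV}, which is why the estimates proved for the time-dependent $\Phi$ transfer verbatim. Alternatively, one can observe that $\Kop\rho$ is simply the stabilised value of $\mathcal{K}\rho(t)$ (viewing $\rho$ as a constant function of $t$) for $t\ge A_m$, so the conclusion is also an immediate corollary of Lemma~\ref{lemOmega}.
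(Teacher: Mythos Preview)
Your proof is correct and follows essentially the same approach as the paper's: monotonicity via Proposition~\ref{pro:comparison}, and the range bound via the universal majorant underlying \eqref{Omega}. The paper simply cites \eqref{Omega} from Lemma~\ref{lemOmega} for the range statement, whereas you unpack the argument (and then note the same shortcut at the end); there is no substantive difference.
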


\begin{proof}
The nondecreasing property is by Proposition~\ref{pro:comparison} and (\ref{OmegaB}) follows from (\ref{Omega}).
\end{proof}

\begin{definition}
The equation
\begin{align}\label{char}
\Kop \rho=\rho.
\end{align}
is said to be the \textit{characteristic equation} for the problem (\ref{rhoind1}). A nonnegative solution $\rho$ of (\ref{char}) is called a \textit{stationary solution} of (\ref{rhoind1}).
\end{definition}

 The set of stationary solutions is nonempty because $\rho=0$ is a (trivial) stationary solution. In section~\ref{sec:max} we characterize all nontrivial stationary solutions.

 As it was noticed before, the characteristic equation describes the possible scenario of the limit at infinity of solutions to (\ref{rhoind1}). The next lemma makes this observation more precise. First let us note that the  limit
$$
\rho_\infty:=\rho(M)\equiv \lim_{t\to \infty}\rho(t).
$$
is well-defined for any $\rho\in S_M$, where
$$
S_M:=\{\rho:\R{}_+\to \R{N}_+ \text{ such that  } \rho(t) \text{ is constant for  }t\ge M\}.
$$

\begin{lemma}
\label{lem:shift}
For any $f\in C(\R{}_+,\R{N}_+)$, $$\mathscr{L}_\mathbf{f}:S_M\to S_{M+A_m}
$$
and for any $\rho\in S_M$
\begin{equation}\label{koop}
(\mathscr{L}_\mathbf{f}\rho)_\infty=\Kop \rho_\infty.
\end{equation}
\end{lemma}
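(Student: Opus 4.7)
The plan is to unpack the two operators $\mathcal{K}$ and $\mathcal{F}$ in the constant-environment case and show that both stabilize for large $t$, with $\mathcal{F}\mathbf{f}$ vanishing and $\mathcal{K}\rho$ collapsing onto the value $\Kop\rho_\infty$. The key observation is that in a constant environment the ODE defining $\Phi$ in \eqref{balanceV} has coefficients independent of $y$, so by uniqueness of solutions to the initial value problem one has the pointwise identity
\begin{equation*}
\Phi(a;\rho,y)=\varphi(a;\rho(y)),
\end{equation*}
where $\varphi(\cdot;\xi)$ is the function from \eqref{varphieq} with initial value $\xi$. This identity is exactly what converts the nonlocal integral defining $\mathcal{K}$ into something controlled by a single initial datum, and it is the structural ingredient that makes \eqref{koop} true.

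First I would record that by \ref{Hain4} the support condition gives $\mathbf{m}(a,t)\equiv 0$ for $a\notin[a_m,A_m]$, so $\mathcal{F}\mathbf{f}(t)=0$ whenever $t\ge A_m$ (this was already noted in the proof of Lemma~\ref{lemOmega}). Hence on the tail $t\ge A_m$ we have $\mathscr{L}_\mathbf{f}\rho(t)=\mathcal{K}\rho(t)$, and the claim reduces to analyzing $\mathcal{K}\rho(t)$ for $t\ge M+A_m$.

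Next, for $\rho\in S_M$ and $t\ge M+A_m$ I would rewrite
\begin{equation*}
\mathcal{K}\rho(t)=\int_{a_m}^{A_m}\mathbf{m}(a)\,\Phi(a;\rho,t-a)\,da=\int_{a_m}^{A_m}\mathbf{m}(a)\,\varphi(a;\rho(t-a))\,da,
\end{equation*}
using the identity above. Since $t-a\ge M+A_m-A_m=M$ for every $a\in[a_m,A_m]$, and since $\rho$ is constant equal to $\rho_\infty$ on $[M,\infty)$, the integrand depends on $a$ only through $\mathbf{m}(a)\varphi(a;\rho_\infty)$. Comparing with the definition \eqref{char111} of $\Kop$ immediately yields $\mathcal{K}\rho(t)=\Kop\rho_\infty$ for all such $t$. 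Combining with $\mathcal{F}\mathbf{f}(t)=0$ shows $\mathscr{L}_\mathbf{f}\rho(t)=\Kop\rho_\infty$ for $t\ge M+A_m$, which gives both $\mathscr{L}_\mathbf{f}\rho\in S_{M+A_m}$ and \eqref{koop}.

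There is no real obstacle: the only subtlety is correctly tracking the effect of the time-shift in the argument $t-a$ of $\rho$, which is why the threshold increases from $M$ to $M+A_m$. The role of \ref{Hain4} is essential in two places (truncating $\mathcal{F}\mathbf{f}$ and trimming the $\mathcal{K}$ integration to $[a_m,A_m]$), while the constant-environment hypothesis is what allows the reduction $\Phi(a;\rho,y)=\varphi(a;\rho(y))$ from a family of ODEs parametrized by $y$ to a single $\xi$-parametrized ODE.
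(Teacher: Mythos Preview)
Your proof is correct and follows essentially the same approach as the paper's: both use \ref{Hain4} to truncate $\mathcal{F}\mathbf{f}$ and the $\mathcal{K}$-integral, then invoke the time-independence identity $\Phi(a;\rho,y)=\varphi(a;\rho(y))$ (recorded in the paper as \eqref{phiphi}) together with the shift $t-a\ge M$ to collapse the integrand onto $\varphi(a;\rho_\infty)$. Your exposition is slightly more explicit about the role of the constant-environment hypothesis, but the argument is identical in substance.
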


\begin{proof}
It follows from (\ref{FM}) and \ref{Hain4} that for any $t\ge M+A_m$ there holds
\begin{align*}
\mathscr{L}_\mathbf{f}\rho(t)=\mathcal{K}\rho(t)&=
\int_0^t\mathbf{m}(a)\Phi(a; \rho,t-a)\,da=\int_0^{A_m}\mathbf{m}(a)\Phi(a; \rho,t-a)\,da.
\end{align*}
Next, by virtue  our choice of $t$ we have for any $0\le a\le A_m$ that $t-a\ge t-A_m\ge M$, therefore $\Phi(a; \rho,t-a)=  \Phi(a;\rho_\infty,M)=\varphi(a;\rho_\infty)$. Therefore for all $t\ge M+A_m$
\begin{align*}
\mathscr{L}_\mathbf{f}\rho(t)=\int_0^{A_m}\mathbf{m}(a)\Phi(a;\rho_\infty,M) \,da\equiv
\int_0^{\infty}\mathbf{m}(a)\varphi(a; \rho_\infty)\,da=\Kop \rho_\infty
\end{align*}
which yields the desired conclusions.
\end{proof}

\subsection{The maximal solution of the characteristic equation}\label{sec:max}
A vector $\rho \in \R{N}_+$ is called  an \textit{upper} (resp. \textit{lower}) solution to equation (\ref{char}) if $\rho\geq\Kop \rho$ (resp. $\rho\leq\Kop \rho $).

\begin{lemma}
\label{lemupp}
The set of  lower solutions of \eqref{char} is bounded:
$$
\{\rho:\Kop \rho\le \rho\}\subset Q^-.
$$
Furthermore, any $\rho\in Q^+$ is an upper solution of \eqref{char}.
\end{lemma}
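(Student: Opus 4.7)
The plan is to deduce both assertions as immediate consequences of Corollary~\ref{cor:comparison1}, which asserts that $\Kop$ maps $\R{N}_+$ into the box $Q^-$, so that the a priori bound $\Kop\rho\le \omega_2\mathbf{1}_N$ holds for every $\rho\in \R{N}_+$. Both claims of the lemma will then reduce to sandwiching $\rho$ against this uniform bound.

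For the first claim, I would let $\rho$ be a lower solution, i.e., $\rho\in \R{N}_+$ with $\rho\le \Kop\rho$. Chaining this with $\Kop\rho\le \omega_2\mathbf{1}_N$ from Corollary~\ref{cor:comparison1} gives $\rho\le \Kop\rho\le \omega_2\mathbf{1}_N$, so $\rho\in Q^-$, which is the stated inclusion (with the inequality in the displayed set understood in the lower-solution sense $\rho\le \Kop\rho$ of the preceding definition).

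For the second claim, I would take $\rho\in Q^+$, which by definition means $\rho\ge \omega_2\mathbf{1}_N$. Applying the same corollary in the opposite direction yields $\Kop\rho\le \omega_2\mathbf{1}_N\le \rho$, which is precisely the defining inequality $\rho\ge \Kop\rho$ of an upper solution.

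No substantive obstacle is expected: the whole content of the lemma is a repackaging of the uniform bound $\|\Kop\rho\|_\infty\le \omega_2$ that was derived earlier from the universal majorant (Proposition~\ref{cor1}) together with the support condition \ref{Hain4} on $\mathbf{m}$ (which makes the integral defining $\Kop$ proper and produces the explicit constant $\omega_2$ in \eqref{omegadef}). The only point worth emphasising is that $\Kop\rho$ is defined only for $\rho\in \R{N}_+$ via the initial value problem \eqref{varphieq}, so both statements are implicitly confined to the nonnegative cone, and no further monotonicity or concavity of $\mathbf{M}$ or $\mathbf{D}$ is needed at this stage.
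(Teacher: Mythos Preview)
Your proposal is correct and follows essentially the same approach as the paper. The only cosmetic difference is that you invoke Corollary~\ref{cor:comparison1} (the range inclusion $\Kop:\R{N}_+\to Q^-$) as a black box, whereas the paper re-derives the bound $\Kop\rho\le\omega_2\mathbf{1}_N$ inline from the universal majorant estimate \eqref{estphi} and the definition \eqref{omegadef} of $\omega_2$; the content is identical. Your parenthetical remark that the displayed set should be read as $\{\rho:\rho\le\Kop\rho\}$ is also well taken, since the paper's own proof indeed starts from the lower-solution inequality $\rho\le\Kop\rho$.
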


\begin{proof}
Indeed,  if $\rho\le \Kop \rho$ then applying (\ref{char}), (\ref{estphi}) and (\ref{omegadef}) one obtains
$$
\rho\le \int_0^{\infty}\mathbf{m}(a)\varphi(a;\rho)\,da \le \int_{a_m}^{A_m}\mathbf{m}(a)\mathbf{1}_N\frac{\omega_1}{a^{1/\gamma}}\,da \leq \omega_2\mathbf{1}_N
$$
which yields $\rho\in  Q^-$, and therefore the first claimed inclusion. Next, arguing similarly we have for any   $\rho\in  Q^+$  that
$$
\rho\ge \omega_2\cdot \textbf{1}_{N}\ge \int_0^{\infty}\mathbf{m}(a)\varphi(a;\rho)\,da=\Kop \rho
$$
which proves that $\rho$ is an upper solution of \eqref{char}.
\end{proof}

\begin{proposition}\label{it}
For any  $\rho^+\in Q^+$ the  limit
\begin{align}\label{limrho}
\theta:=\lim_{i\to \infty }\Kop ^i\rho^+
\end{align}
exists and $\theta$ is a solution of the characteristic equation. Furthermore,
\begin{enumerate}[label=$(\mathrm{{\roman*}})$]
\item \label{itemK2}
$\theta$  does not depend on a particular choice of $\rho^+\in Q^+$;
\item \label{itemK3}
if $\rho$ is an arbitrary lower solution of \eqref{char} then $\rho\le \theta$.
\end{enumerate}
\end{proposition}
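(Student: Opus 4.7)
The plan is to view the monotone nonlinear operator $\Kop$ as a Picard-type scheme launched from an upper solution, and then deduce all three conclusions from monotonicity, continuity of $\Kop$, and the obvious fact that any fixed point of $\Kop$ is both an upper and a lower solution. I will assume throughout the monotonicity of $\Kop$ (Corollary~\ref{cor:comparison1}) and the bounds from Lemma~\ref{lemupp}.

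First, I would verify that the iteration is well-defined and convergent. By Lemma~\ref{lemupp} any $\rho^+\in Q^+$ is an upper solution, so $\Kop\rho^+\le\rho^+$. Applying the monotone operator $\Kop$ repeatedly yields a pointwise non-increasing sequence
\[
\rho^+\ge \Kop\rho^+\ge \Kop^2\rho^+\ge\cdots\ge 0,
\]
bounded below by $0$ since $\Kop$ preserves $\R{N}_+$. Hence the componentwise limit $\theta=\lim_{i\to\infty}\Kop^i\rho^+$ exists. To see that $\theta$ satisfies \eqref{char}, I would note that $\Kop$ is continuous on bounded subsets of $\R{N}_+$: indeed, from \eqref{char111} combined with the Lipschitz estimate \eqref{LipPhi} of Lemma~\ref{propPhi} applied to constant (in $y$) arguments, one has $\|\Kop\rho-\Kop\rho'\|_\infty\le (A_m-a_m)\|\mathbf{m}\|_\infty e^{N\|\mathbf{D}\|b}\|\rho-\rho'\|_\infty$. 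Passing to the limit in $\Kop^{i+1}\rho^+=\Kop(\Kop^i\rho^+)$ gives $\theta=\Kop\theta$.

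Next I would establish (ii), as it is the key tool for (i). Let $\rho$ be any lower solution, so $\rho\le \Kop\rho$. By Lemma~\ref{lemupp} we have $\rho\in Q^-$, i.e.\ $\rho\le\omega_2\mathbf{1}_N$, and since $\rho^+\in Q^+$ we obtain $\rho\le\rho^+$. Monotonicity of $\Kop$ together with the lower-solution inequality gives
\[
\rho\le \Kop\rho\le \Kop\rho^+,
\]
and an obvious induction yields $\rho\le \Kop^i\rho^+$ for every $i\ge 0$. Letting $i\to\infty$ produces $\rho\le\theta$.

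Finally, (i) follows from (ii) by a short symmetry argument. If $\rho^+_1,\rho^+_2\in Q^+$ produce limits $\theta_1,\theta_2$, then each $\theta_j$ is a fixed point of $\Kop$, hence in particular a lower solution of \eqref{char}. Applying (ii) with upper initial data $\rho^+_1$ to the lower solution $\theta_2$ gives $\theta_2\le\theta_1$, and swapping the roles yields $\theta_1\le\theta_2$, so $\theta_1=\theta_2$. The only step that required any care was establishing continuity of $\Kop$ in order to pass to the limit in Step~1; once that is in hand the rest is pure order-theoretic bookkeeping, and it is precisely the maximality claim (ii) that makes the whole argument self-contained.
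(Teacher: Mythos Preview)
Your proof is correct and follows essentially the same route as the paper's: establish the existence of the limit via the monotone non-increasing iteration from an upper solution, prove the lower-solution bound \ref{itemK3} first by sandwiching $\rho\le\rho^+$ and iterating, and then deduce the independence \ref{itemK2} by the symmetry argument. The only difference is that you supply an explicit Lipschitz-continuity argument for $\Kop$ to justify passing to the limit in $\Kop^{i+1}\rho^+=\Kop(\Kop^i\rho^+)$, whereas the paper simply asserts this step as trivial.
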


\begin{proof}
By Lemma~\ref{lemupp},  $\Kop \rho^+\le \rho^+$. Thus, by the monotonicity of $\Kop$ we have for all $i\ge0$ that
$$
\Kop^{i+1}\rho\equiv \Kop^i\Kop \rho^+\le \Kop^i \rho^+,
$$
thus $\{\Kop^i \rho^+\}$ is a non-increasing sequence bounded from below: $\Kop^i \rho^+\ge0$. This implies the existence of the limit in (\ref{limrho}). Let us for a moment denote the limit by $\theta(\rho^+)$. It follows trivially that $\Kop \theta(\rho^+)=\theta(\rho^+)$. This proves that $\theta(\rho^+)$  is a solution of the characteristic equation. Next, let $\rho$ be an arbitrary lower solution of \eqref{char}. Then by Lemma~\ref{lemupp}
$$
\rho \le \Kop \rho\le \omega_2 \textbf{1}_{N}\le \rho^+.
$$
Iterating the latter inequality yields $\rho \le \Kop^i \rho\le \Kop^i \rho^+$, and passing to the limit as $i\to \infty$ we get $\rho\le \rho^+(\theta)$. This proves \ref{itemK3}. Now suppose that $\rho^+_1\in Q^+$. Then $\theta(\rho^+_1)$ is a solution of the characteristic equation, hence by \ref{itemK3}
$$
\theta(\rho^+_1)\le \theta(\rho^+),
$$
which, by symmetry, yields the equality in the latter inequality. This establishes the independence  of $\theta(\rho^+)$ on a choice of $\rho^+$, implying \ref{itemK2}.
\end{proof}

\begin{definition}
The unique $\theta$ defined by (\ref{limrho}) is called the \textit{maximal solution} of the characteristic equation.
\end{definition}

Note that the maximal solution $\theta$ does not depend on the initial population distribution $\mathbf{f}(a)$ and it is essentially determined by the maternity function $\mathbf{m}(a)$. As we shall see, the maximal solution  plays a distinguished role in the asymptotic analysis.


%
%
%

\subsection{The net reproductive rate dichotomy}

Throughout this section we  assume additionally that the condition \ref{Hain6} is also fulfilled.  Let us consider the scaled version  of $\Kop$ by
\begin{equation}\label{KR}
\mathscr{R}_\lambda x=\half{1}{\lambda} \Kop \lambda x, \quad x\in \R{N}_+, \qquad \lambda\in (0,\infty).
\end{equation}
Equivalently, we have component-wise
\begin{align}\label{Req}
\mathscr{R}_\lambda x:=\int_0^{\infty}\mathbf{m}(a)\mathbf{Y}(a; x,\lambda)\,da,
\end{align}
where
\begin{align*}
\mathbf{Y}(a; x , \lambda)&=\half{1}{\lambda}\varphi(a;\lambda x), \quad x\in \R{N}_+.
\end{align*}

Thus, the existence of a nontrivial solution to the  characteristic equation  (\ref{char}) is equivalent to the existence of a pair $(e,\lambda)$ , where a unit vector (a direction) $e\in\R{N}_+$, $\|e\|=1$ and a scalar $\lambda>0$ are such that
\begin{align}\label{Yk}
 e =\mathscr{R}_\lambda e.
\end{align}
The next lemma establishes that for each direction $e\in \R{N}_+$ there is at most one such pair.

\begin{lemma}
\label{lem:Ymono}
The operator $\mathscr{R}_\lambda$ is decreasing with respect to $\lambda$:
\begin{equation}\label{Ymono}
\lambda_2>\lambda_1\ge0 \quad \Rightarrow \quad \mathscr{R}_{\lambda_1}x\gg \mathscr{R}_{\lambda_2}x\quad \forall x\in \R{N}_+.
\end{equation}
In particular,
given an arbitrary direction $e\in\R{N}_+$,  $\|e\|=1$,
$$
\mathrm{card}\{\lambda>0:\,\lambda  e\in \Char\}\equiv
\mathrm{card}\{\lambda>0:\, e=\mathscr{R}_{\lambda} e\}\le 1.
$$
\end{lemma}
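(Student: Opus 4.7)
My plan is to reduce the monotonicity statement to the strong concavity result of Proposition~\ref{pro:312}, applied to the vector field $\mathbf{F}(w,a) = -\mathbf{M}(w,a)w + \mathbf{D}(a)w$ that governs the initial value problem \eqref{varphieq}. First I would verify the hypotheses of Proposition~\ref{pro:312} on the cone $\R{N}_+$ where $\mathbf{F}$ is naturally defined. The Kamke--M\"uller condition is immediate from \ref{Hain3}, since the off-diagonal entries of $D\mathbf{F}(w,a)$ equal $D_{kj}(a)\ge 0$. Concavity in the sense of \eqref{fcond0} on $\R{N}_+$ follows from the monotonicity of $M_k$ in $v$ given by \ref{Hain2}. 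Strong concavity holds because for $u \in \R{N}_+$ with $u_k>0$ and $\alpha>1$,
$$
F_k(\alpha u, a) - \alpha F_k(u, a) = -\alpha u_k\bigl(M_k(\alpha u_k, a) - M_k(u_k, a)\bigr) < 0,
$$
where the last inequality uses the strict growth of $M_k$ in $v$ implied by \eqref{hain2}. Finally, by \ref{Hain6} every patch $k$ is $\mathbf{F}$-accessible at some $\beta_k < \bar a_k$.

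With these ingredients in hand, fix $0 < \lambda_1 < \lambda_2$ and $x \in \R{N}_+$, $x \ne 0$. Assuming first $x \gg 0$, I apply Proposition~\ref{pro:312} with $\xi = \lambda_1 x$ and $\alpha = \lambda_2/\lambda_1 > 1$. Accessibility of patch $k$ at $\beta_k$ gives
$$
\varphi_k(a;\lambda_2 x) < \frac{\lambda_2}{\lambda_1}\,\varphi_k(a;\lambda_1 x), \qquad a \in (\beta_k, \infty),
$$
for each $k$. Dividing by $\lambda_2$ and integrating against $m_k(a)$ yields $(\mathscr{R}_{\lambda_2}x)_k < (\mathscr{R}_{\lambda_1}x)_k$. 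The crucial point is that this strict pointwise inequality must survive the integration: this is exactly what the condition $\beta_k < \bar a_k$ in \ref{Hain6} guarantees, since by the definition of $\bar a_k$ the set $\{a > \beta_k : m_k(a) > 0\}$ has positive measure. When $x$ has some zero components, one first uses Lemma~\ref{lemma20} to propagate positivity: at some $a_0 > 0$ the accessibility structure of $\mathbf{D}$ yields $\varphi(a_0;\lambda_1 x) \gg 0$, after which the preceding argument applies to the shifted flow. The boundary case $\lambda_1 = 0$ is handled by the comparison principle of Proposition~\ref{pro:comparison}: since $\mathbf{M}(w,a) \ge \mathbf{M}(0,a)$, the scaled linear flow $\lambda_2 \mathbf{Y}(\,\cdot\,;x,0)$ is an upper solution of the nonlinear problem and strictly exceeds $\varphi(\,\cdot\,;\lambda_2 x)$ on every patch $k$ beyond $\beta_k$, again by Lemma~\ref{lemma20}.

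The cardinality assertion is then a one-liner: if $e = \mathscr{R}_{\lambda_1}e = \mathscr{R}_{\lambda_2}e$ both held for some $\lambda_1 < \lambda_2$, the strict monotonicity just established would give $e \gg e$, which is absurd. The main technical hurdle I anticipate is not the pointwise strict inequality itself (Proposition~\ref{pro:312} supplies it), but its passage to the integrated inequality; condition \ref{Hain6} is engineered precisely to cover this step, and the remaining bookkeeping for $x$ on $\partial\R{N}_+$ and for $\lambda_1 = 0$ is routine.
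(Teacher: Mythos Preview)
Your proposal is correct and follows essentially the same route as the paper: both arguments invoke Proposition~\ref{pro:312} with $\xi=\lambda_1 x$ and $\alpha=\lambda_2/\lambda_1>1$ to obtain the strict pointwise inequality $\varphi_k(a;\lambda_2 x)<(\lambda_2/\lambda_1)\varphi_k(a;\lambda_1 x)$ for $a>\beta_k$, and then integrate against $m_k$ using \ref{Hain6} to preserve strictness. Two minor remarks: your case split between $x\gg0$ and $x\in\partial\R{N}_+$ is unnecessary, since Proposition~\ref{pro:312} already applies under the weaker hypothesis $\xi>0$ (in the paper's convention this means $\xi\ge0$, $\xi\ne0$); and your separate treatment of $\lambda_1=0$ actually goes beyond what the paper proves in this lemma---there the boundary case is deferred to the subsequent lemma via \eqref{Ymono0}.
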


\begin{proof}
Since $\alpha=\lambda_2/\lambda_1>1$ we have from (\ref{conv2})
$$
\varphi(a;\lambda_2 x)=\varphi(a;\alpha \lambda_1 x)\le \alpha \varphi(a; \lambda_1 x),
$$
i.e. $Y(a;x, \lambda_2)\le Y(a;x, \lambda_1)$. This yields the weaker inequality $\mathscr{R}_{\lambda_1}x\ge \mathscr{R}_{\lambda_2}x$ for any $x\in \R{N}_+$. Next, by \ref{Hain6}  for an arbitrary $1\le k\le N$, there exists $\beta_k\le \sup\supp m_k$ such that the patch $k$ is accessible at $\beta_k$. By (\ref{conv3}), $\varphi_k(a;\alpha \lambda_1 x)< \alpha \varphi(a; \lambda_1 x)$ holds for any $a>\beta_k$. Thus, $Y_k(a;x, \lambda_2)< Y_k(a;x, \lambda_1)$ for $a>\beta_k$. Since $\supp m_k(a)\cap (\beta_k,\infty)$ has an nonempty interior, it follows from (\ref{Req}) that  $(\mathscr{R}_{\lambda_1}x)_k> (\mathscr{R}_{\lambda_2}x)_k$ for any $x\in \R{N}_+$. By the arbitrariness of $k$ one has (\ref{Ymono}). Next,  $e\in \R{N}_+$, $\|e\|=1$ be such that the set $\{\lambda>0:\, e=\mathscr{R}_{\lambda} e\}$ is nonempty,  say $e=\mathscr{R}_{\lambda_0} e$ for some $\lambda_0>0$. Then (\ref{Ymono}) yields
$$
\mathscr{R}_{\lambda_2} e\ll e=\mathscr{R}_{\lambda_0} e \ll\mathscr{R}_{\lambda_1} e
$$
for any $\lambda_1<\lambda_0<\lambda_2$. This proves that $\lambda_0$ is the only solution of $e=\mathscr{R}_{\lambda} e$.
\end{proof}

In the course of the proof of the lemma we  have established the following property.

\begin{corollary}
\label{cor:cf}
For any $0<\lambda<1$ and any $x\in \R{N}_+$ there holds $\lambda \varphi(a;x)\le  \varphi(a; \lambda x$).
\end{corollary}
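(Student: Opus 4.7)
The plan is to extract this inequality as a direct algebraic consequence of the concavity estimate (\ref{conv2}) from Proposition~\ref{pro:312}. The setting of that proposition applies here: the vector field $\mathbf{F}(w,a) = -\mathbf{M}(w,a)w + \mathbf{D}(a)w$ satisfies the Kamke--M\"uller condition (since $\mathbf{D}(a)$ is Metzler by \ref{Hain3}) and is concave in $w$ under the structural assumptions. Combining this with the identification $\varphi(a;\rho) = \Phi(a;\rho,y)$ from (\ref{phiphi}), the estimate (\ref{conv2}) reads $\varphi(a;\alpha\xi) \le \alpha\varphi(a;\xi)$ for all $\alpha\ge 1$ and $\xi\in\R{N}_+$.

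Given $0<\lambda<1$ and $x\in\R{N}_+$, I would set $\alpha := 1/\lambda > 1$ and $\xi := \lambda x$, so that $\alpha\xi = x$. Applying (\ref{conv2}) to this choice gives
$\varphi(a;x) \le (1/\lambda)\varphi(a;\lambda x),$
and multiplying through by $\lambda > 0$ yields $\lambda\varphi(a;x) \le \varphi(a;\lambda x)$, which is the claim.

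There is essentially no obstacle here; the corollary is a one-line rearrangement of (\ref{conv2}) with the roles of the two arguments interchanged, and this very manipulation (with the concrete values $\alpha = \lambda_2/\lambda_1$ and $\xi = \lambda_1 x$) already appears inside the proof of Lemma~\ref{lem:Ymono}. The real content of the argument lies not in the corollary itself but in the concavity and Kamke--M\"uller hypotheses that underwrite (\ref{conv2}), which in turn rest on the Metzler structure of $\mathbf{D}$ and the monotonicity of the mortality rates $M_k(v,a)$ in $v$.
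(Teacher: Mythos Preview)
Your proof is correct and matches the paper's approach exactly: the corollary is simply a restatement of (\ref{conv2}) with $\alpha=1/\lambda$ and $\xi=\lambda x$, which is precisely the manipulation carried out inside the proof of Lemma~\ref{lem:Ymono}. The paper itself presents this corollary with no separate proof, noting only that it was established ``in the course of the proof of the lemma.''
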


%

The limit case $\lambda=0$ plays a distinguished role in the further analysis. Notice that
$Y_k(a;x,\lambda)$ is non-decreasing in $\lambda>0$ and by (\ref{Phibound}) $Y_k(a;x,\lambda)\le e^{N\|\mathbf{D}\|b}$, where the constant $b$ is from \ref{Hain1}. This implies that the limit
$$
Y_k(a;x):=\lim_{\lambda\to+0}Y_k(a;x,\lambda)
$$
does exist for any fixed $x\in \R{N}_+$, and the standard  argument shows that $\mathbf{Y}(a;x)$ is the unique solution of the \textit{linear} system
\begin{equation}
\left\{\begin{aligned}\label{Yax0}
\frac{d \mathbf{Y}(a;x)}{da}&=(\mathbf{D}(a)-\mathbf{M}(0,a))\mathbf{Y}(a;x),
\\
\mathbf{Y}(0;x)&=x.
\end{aligned}
\right.\end{equation}
Here
$$
\mathbf{M}(0,a)=\diag(\mu_1(a),\ldots, \mu_N(a))
$$
with $\mu_k(x)$ is defined by (\ref{uslovM3}).  Since $m_k\ge0$, the limit
\begin{equation}\label{reprmat}
\mathscr{R}_{0}x=\lim_{\lambda\to +0}
 \int_0^{\infty}\mathbf{m}(a)\mathbf{Y}(a;x,\lambda)\,da= \int_0^{\infty}\mathbf{m}(a)\mathbf{Y}(a;x)\,da.
\end{equation}
is well defined for each $x\in \R{N}_+$.

To proceed, we recall some standard concepts of the nonnegative matrix theory. A matrix $A$ is called {reducible} \cite{MincBook} if for some permutation matrix $P$
$$
P A P^t= \left(
    \begin{array}{ccc}
      A_{11} & 0 \\
      A_{21} & A_{22} \\
          \end{array}
  \right),
$$
where $A_{11}, A_{22}$ are square matrices, otherwise $A$ is called {irreducible}. There is  the following combinatorial characterization of the irreducibility, see \cite[p.~27]{Berman}, \cite[p.~671]{Meyer}: the condition that a \textit{nonnegative} matrix $A$ of order $n\ge 2$ is irreducible is equivalent to any of the following conditions:

\begin{enumerate}
  \item[(a)] no nonnegative eigenvector of $A$ has a zero coordinate;
  \item[(b)] $A$ has exactly one (up to scalar multiplication) nonnegative eigenvector, and this vector is positive;
  \item[(c)] $\alpha x\ge Ax$ and $x>0$ implies $x\gg0$;
  \item[(d)] the associated graph $\Gamma(A)$ is strongly connected.
  \end{enumerate}

\begin{lemma}
The map $\mathscr{R}_{0}: \R{N}\to \R{N}$  defined by $(\ref{reprmat})$ is linear and strongly positive, i.e. $x>0$ implies $\mathscr{R}_{0}x\gg 0$. In particular, $\mathscr{R}_{0}$ is an irreducible matrix. Furthermore,
\begin{equation}\label{Ymono0}
\mathscr{R}_{\lambda}x \ll\mathscr{R}_{0} x, \quad \forall x\in \R{n}_+, \,\,\lambda>0.
\end{equation}
 \end{lemma}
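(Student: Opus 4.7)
My plan is to establish the four claims—linearity, strong positivity, irreducibility, and the strict inequality (\ref{Ymono0})—in sequence, leveraging the monotonicity results of Section~\ref{sec:prel}. Linearity is immediate: since $\mathbf{Y}(a;x)$ solves the \emph{linear} initial-value problem (\ref{Yax0}) with $\mathbf{Y}(0;x)=x$, superposition makes $x\mapsto\mathbf{Y}(a;x)$ a linear map for every $a\ge 0$, and hence $\mathscr{R}_0 x=\int_0^\infty \mathbf{m}(a)\mathbf{Y}(a;x)\,da$ is linear on $\R{N}$. Nonnegativity of the matrix entries follows from Lemma~\ref{lemma2} applied to (\ref{Yax0}), whose coefficient matrix $\mathbf{D}(a)-\mathbf{M}(0,a)$ is Metzler (because $\mathbf{D}(a)$ is Metzler by \ref{Hain3} and $\mathbf{M}(0,a)$ is diagonal).

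The heart of the argument, and the step I expect to be most delicate, is strong positivity. Fix $x>0$ and regard $\mathbf{Y}(\cdot;x)$ as a solution, hence in particular an upper solution, of the linear system generated by $\mathbf{F}_0(w,a)=(\mathbf{D}(a)-\mathbf{M}(0,a))w$, which satisfies the Kamke--M\"uller condition. For each $1\le k\le N$, hypothesis \ref{Hain6} supplies some $\beta_k<\bar a_k$ at which patch $k$ is accessible in $\Gamma(\mathbf{D}(\beta_k))$; since $D\mathbf{F}_0(w,a)=\mathbf{D}(a)-\mathbf{M}(0,a)$ has the same off-diagonal sign pattern as $\mathbf{D}(a)$, patch $k$ is $\mathbf{F}_0$-accessible at $\beta_k$ as well. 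Lemma~\ref{lemma20} therefore gives $Y_k(a;x)>0$ for every $a>\beta_k$. Because $\bar a_k=\sup\{a:m_k(a)>0\}$ and $m_k$ is continuous, there exists an open subinterval of $(\beta_k,\bar a_k)$ on which $m_k>0$, so the integrand in
\[
(\mathscr{R}_0 x)_k=\int_0^\infty m_k(a)Y_k(a;x)\,da
\]
is strictly positive on a set of positive Lebesgue measure, yielding $(\mathscr{R}_0 x)_k>0$ for every $k$. Irreducibility is then immediate: applying strong positivity to each standard basis vector $e_j$ shows that every column of the matrix $\mathscr{R}_0$ is entrywise positive, so $\Gamma(\mathscr{R}_0)$ is complete and hence strongly connected, and criterion (d) above gives irreducibility.

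For the strict inequality (\ref{Ymono0}), fix $\lambda>0$ and $x>0$; a chaining trick bridges Lemma~\ref{lem:Ymono} (which covers only $\lambda_1>0$) to the case $\lambda_1=0$. Pick any $\lambda_\ast\in(0,\lambda)$: Lemma~\ref{lem:Ymono} applied with $(\lambda_1,\lambda_2)=(\lambda_\ast,\lambda)$ gives $\mathscr{R}_{\lambda_\ast}x\gg\mathscr{R}_\lambda x$. On the other hand, the monotonicity established in that same lemma shows that $Y(a;x,\cdot)$ is non-increasing on $(0,\infty)$, and monotone passage to the limit inside the integral as the parameter tends to $0^+$ yields $\mathscr{R}_0 x\ge\mathscr{R}_{\lambda_\ast}x$. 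Chaining, $\mathscr{R}_0 x\ge\mathscr{R}_{\lambda_\ast}x\gg\mathscr{R}_\lambda x$, which completes the proof.
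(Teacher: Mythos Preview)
Your proof is correct and follows essentially the same route as the paper: linearity from the linear system (\ref{Yax0}), strong positivity via \ref{Hain6} and Lemma~\ref{lemma20} (the paper phrases this as ``repeating the argument of Lemma~\ref{lem:Ymono}''), irreducibility as a consequence of strong positivity, and (\ref{Ymono0}) from the monotonicity in Lemma~\ref{lem:Ymono}. The only cosmetic differences are that you deduce irreducibility by observing every column of $\mathscr{R}_0$ is strictly positive (the paper instead argues by contradiction with a block decomposition), and you spell out the chaining $\mathscr{R}_0 x\ge\mathscr{R}_{\lambda_\ast}x\gg\mathscr{R}_\lambda x$ where the paper simply cites (\ref{Ymono})---your version is in fact more careful, since the proof of Lemma~\ref{lem:Ymono} as written uses $\alpha=\lambda_2/\lambda_1$ and so does not directly cover $\lambda_1=0$.
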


 \begin{proof}
Indeed, the linearity follows immediately by (\ref{reprmat}) and (\ref{Yax0}). Since the matrix  $\mathbf{M}(0,a)$ is diagonal, the associated digraphs of the matrices $\mathbf{D}(a)$ and $\mathbf{D}(a)-\mathbf{M}(0,a)$  are equal. Therefore, using \ref{Hain6} readily yields  that $Y_k(a;x,0)> 0$ for any $a>\beta_k$. Hence, repeating the argument of Lemma~\ref{lem:Ymono} we have from (\ref{reprmat}) and \ref{Hain4} that $(\mathscr{R}_{0}x)_k>0$ for any $k$. This proves $\mathscr{R}_{0}x\gg 0$. Suppose by contradiction  that $\mathscr{R}_{0}$ is reducible. Then for some permutation matrix $P$
\begin{equation}\label{block}
P \mathscr{R}_{0} P^t= \left(
    \begin{array}{ccc}
      A_{11} & 0 \\
      A_{21} & A_{22} \\
          \end{array}
  \right),
\end{equation}
where $A_{11}, A_{22}$ are square matrices. Let $x>0$ be a vector in $\R{N}_+$ with all first $m$ coordinates zero, where $m$ is the order of $A_{11}$. By (\ref{block}) $P \mathscr{R}_{0} P^t x$ has the same property, i.e. the vector $\mathscr{R}_{0}P^t x$ has at least $m$ zero coordinates which  contradicts to the fact that  $\mathscr{R}_{0}P^t x\gg0$. This proves the irreducibility. Finally, (\ref{Ymono0})  follows from (\ref{Ymono}).
 \end{proof}

 \begin{corollary}
If $\mathscr{R}_{0} e\le e$ for any $e\in\R{N}_+$, $\|e\|=1$, then the characteristic equation $(\ref{char})$ admits only trivial solutions.
\end{corollary}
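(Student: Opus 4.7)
The plan is to convert the existence of a nontrivial solution of the characteristic equation $\Kop\rho=\rho$ into the existence of a fixed direction of the rescaled operator $\mathscr{R}_\lambda$ and then play the hypothesis against the strict monotonicity in $\lambda$ provided by \eqref{Ymono0}.

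More precisely, I would start by arguing as follows. Suppose, for contradiction, that $\rho\in\R{N}_+$ is a nontrivial solution of \eqref{char}, so that $\rho> 0$. Set
\[
\lambda:=\|\rho\|_\infty>0,\qquad e:=\rho/\lambda,
\]
so that $e\in\R{N}_+$ with $\|e\|_\infty=1$. Dividing the identity $\Kop(\lambda e)=\lambda e$ by $\lambda$ and using the definition \eqref{KR} of $\mathscr{R}_\lambda$, this is equivalent to
\[
\mathscr{R}_\lambda e=e.
\]

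The key input is now the strict inequality \eqref{Ymono0}: for every $x\in\R{N}_+\setminus\{0\}$ and every $\lambda>0$,
\[
\mathscr{R}_\lambda x\ll \mathscr{R}_0 x.
\]
Applying this with $x=e$ and combining with the standing hypothesis $\mathscr{R}_0 e\le e$, I obtain the chain
\[
e=\mathscr{R}_\lambda e\ll \mathscr{R}_0 e\le e,
\]
i.e.\ $e_k<e_k$ coordinatewise, which is absurd. Hence no nontrivial $\rho$ can exist, and the only solution of \eqref{char} is $\rho=0$.

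The argument is almost routine once the correct normalization is chosen; the only subtle point is ensuring that \eqref{Ymono0} delivers a strict component-wise inequality for the particular unit vector $e$ produced from $\rho$. This is precisely where the accessibility condition \ref{Hain6} (embedded in the proof of Lemma \ref{lem:Ymono}) is used, so no additional work beyond citing \eqref{Ymono0} is required.
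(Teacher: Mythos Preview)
Your argument is correct and essentially identical to the paper's own proof: normalize a putative nontrivial solution $\rho$ to obtain $e=\rho/\|\rho\|$ satisfying $\mathscr{R}_\lambda e=e$ with $\lambda=\|\rho\|>0$, then derive the contradiction $e=\mathscr{R}_\lambda e\ll\mathscr{R}_0 e\le e$ from \eqref{Ymono0} and the hypothesis. The only cosmetic difference is your explicit choice of the $\infty$-norm, which is immaterial.
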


\begin{proof}
Indeed, if $\rho\ne0$ is a nontrivial solution of (\ref{char}) then by  (\ref{KR})  $e=\rho/\|\rho\|$ is a solution of $\mathscr{R}_{\lambda}e =e$ for $\lambda=\|\rho\|$. On the other hand, using the assumption and (\ref{Ymono0}) we obtain
$$
e=\mathscr{R}_{\lambda}e \ll\mathscr{R}_{0} e\le e,
$$
a contradiction follows.
\end{proof}

%
%

Let us denote by $\sigma(\mathscr{R}_{0})$ the spectral radius of the linear map $\mathscr{R}_{0}$. Combining the irreducibility of $\mathscr{R}_{0}$ with the Perron-Frobenius theorem \cite[Theorem~1.3.26]{Berman} implies the following important observation.

\begin{corollary}\label{cor:Krein}
The spectral radius $\sigma(\mathscr{R}_{0})>0$ and it is a simple eigenvalue of $\mathscr{R}_{0}$. If $x$ is an eigenvector of $\mathscr{R}_{0}$ then $x\gg 0$. If  $\lambda\ne\sigma$ is another eigenvalue of $\mathscr{R}_{0}$ then $|\lambda|<\sigma$. Furthermore, the  Collatz-Wielandt identity holds
$$
\max_{x>0} \min_{\substack{1\le i\le N\\x_i\ne 0}} \frac{(\mathscr{R}_{0}x)_i}{x_i}=\sigma(\mathscr{R}_{0}).
$$
\end{corollary}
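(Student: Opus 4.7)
The statement is a direct application of the classical Perron-Frobenius theorem once we verify its hypotheses for $\mathscr{R}_0$. My plan has three steps.

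First, I would upgrade the preceding lemma's strong positivity to entrywise positivity of the matrix $\mathscr{R}_0$. Taking $x = e_j$ (the $j$-th standard basis vector) in the implication $x > 0 \Rightarrow \mathscr{R}_0 x \gg 0$ gives that the $j$-th column $\mathscr{R}_0 e_j$ has every coordinate strictly positive; ranging over $j$, this shows $(\mathscr{R}_0)_{ij} > 0$ for all $i,j$. In particular $\mathscr{R}_0$ is a primitive matrix (irreducibility having been established in the previous lemma), and the positivity of the diagonal entries already gives $\sigma(\mathscr{R}_0) \geq \min_i (\mathscr{R}_0)_{ii} > 0$.

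Second, I would apply the Perron-Frobenius theorem in the form stated in \cite[Theorem~1.3.26]{Berman}. For an irreducible nonnegative matrix this yields simultaneously: $\sigma := \sigma(\mathscr{R}_0)$ is a simple positive eigenvalue; up to scalar multiplication there is a unique nonnegative eigenvector associated with $\sigma$, and that eigenvector is strictly positive; and the Collatz-Wielandt identity
$$
\sigma(\mathscr{R}_0) = \max_{x>0} \min_{\substack{1\le i\le N\\ x_i \neq 0}} \frac{(\mathscr{R}_0 x)_i}{x_i}
$$
holds. The assertion that the eigenvector in the statement is $\gg 0$ is understood as pertaining to the Perron eigenvector associated with $\sigma(\mathscr{R}_0)$; it also follows directly from characterization (b) of irreducibility recalled just before the previous lemma (every nonnegative eigenvector of an irreducible nonnegative matrix is strictly positive).

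Third, the strict spectral gap $|\lambda| < \sigma$ for every eigenvalue $\lambda \neq \sigma$ is where primitivity, rather than mere irreducibility, is needed: since $(\mathscr{R}_0)_{ii} > 0$ for every $i$, the associated digraph $\Gamma(\mathscr{R}_0)$ carries a self-loop at each vertex, hence is aperiodic. Combined with irreducibility this yields the strict Perron-Frobenius bound for primitive matrices. I do not foresee a real obstacle; the entire argument is a bookkeeping exercise of verifying the hypotheses of Perron-Frobenius and quoting its consequences, and the only point requiring a moment's thought is the upgrade from operator-level strong positivity to entrywise positivity of the matrix, which is needed to extract the strict inequality in the third step.
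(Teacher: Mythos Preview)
Your proposal is correct and follows essentially the same route as the paper, which simply invokes the Perron--Frobenius theorem \cite[Theorem~1.3.26]{Berman} after having established in the preceding lemma that $\mathscr{R}_0$ is strongly positive (hence irreducible). Your argument is in fact more careful: you explicitly extract entrywise positivity of $\mathscr{R}_0$ from strong positivity and use the resulting primitivity to justify the strict spectral gap $|\lambda|<\sigma$, a point the paper's one-line proof leaves implicit in its citation.
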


\begin{definition}
The linear map $\mathscr{R}_{0}$ is called the \textit{net reproductive map} associated to the problem (\ref{sys1}). Its spectral radius $\sigma(\mathscr{R}_{0})$ is called  the \textit{net reproductive rate}.
 \end{definition}

The latter definition can be motivated as folows. For a single patch model, i.e.  $N=1$, the linear system (\ref{Yax0}) becomes a single equation
$$
\frac{d}{da}Y_1(a;x,0)=-\mu(a)Y_1(a;x,0),
 $$
with an explicit solution $Y_1(a;x,0)=x\exp(-\int_{0}^a \mu(s)ds)$. Thus (\ref{reprmat}) yields
\begin{align}\label{rateR0}
\mathscr{R}_{0}x&=R_0x,
\end{align}
where
\begin{align}\label{sigmaind}
\sigma(\mathscr{R}_{0})=R_0&=\int_0^{\infty}m(a)e^{-\int_0^a\mu(s)ds}\,da.
\end{align}
The quantity $R_0$ is well-established and is known as
the (inherent) {net reproductive rate}  in the linear time-independent model on a single patch \cite{Iannelli95}, \cite{Cushing98}; see also \cite{we1} or \cite{we2}. Note that in this case,
\begin{align}\label{pimort}
\Pi(a)=e^{-\int_0^a\mu(s)ds}
\end{align}
is the survival probability, i.e. the probability for an individual to survive to age $v$. Then $R_0$ is the expected number of offsprings per individual per lifetime. Recall that in the one-dimensional case, $R_0$ is related to the intrinsic growth rate of population by the characteristic equation. Namely,  when $R_0>1$ population is growing, while for $R_0\leq 1$ population is declining.

The next result extends this dichotomy onto the general multipatch case. Recall that

\begin{theorem}[The Net Reproductive Rate Dichotomy]\label{th:main}
If $\sigma(\mathscr{R}_{0})\leq 1$ then $\theta= 0$ and the equation $(\ref{char})$ has no nontrivial solutions. If $\sigma(\mathscr{R}_{0})> 1$ then $\theta \gg 0$ and $\theta $ is the only nontrivial solution of the characteristic equation $(\ref{char})$.
\end{theorem}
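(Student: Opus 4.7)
The plan is to treat the two regimes separately via the rescaled operator $\mathscr{R}_\lambda$ introduced in (\ref{KR}). The pivotal observation is that a nontrivial $\rho\ge 0$ satisfies $\bar{\mathcal K}\rho=\rho$ if and only if the unit vector $e=\rho/\|\rho\|_\infty$ and the scalar $\lambda=\|\rho\|_\infty>0$ satisfy $e=\mathscr{R}_\lambda e$. Combined with the strict monotonicity $\mathscr{R}_\lambda x\ll\mathscr{R}_0 x$ from (\ref{Ymono0}) and the Perron--Frobenius structure of $\mathscr{R}_0$ furnished by Corollary~\ref{cor:Krein}, this essentially reduces the dichotomy to comparing $\sigma(\mathscr{R}_0)$ with $1$.

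Assume first $\sigma(\mathscr{R}_0)\le 1$ and suppose toward a contradiction that $\rho\ne 0$ solves (\ref{char}). With $e,\lambda$ as above, (\ref{Ymono0}) yields $e=\mathscr{R}_\lambda e\ll\mathscr{R}_0 e$, so $(\mathscr{R}_0 e)_i/e_i>1$ for every $i$ with $e_i>0$. The Collatz--Wielandt identity of Corollary~\ref{cor:Krein} then gives $\sigma(\mathscr{R}_0)\ge\min_{i:\,e_i\ne 0}(\mathscr{R}_0 e)_i/e_i>1$, contradicting the hypothesis. Since $\theta$ is itself a fixed point of $\bar{\mathcal K}$ by Proposition~\ref{it}, the absence of nontrivial fixed points forces $\theta=0$.

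Now assume $\sigma=\sigma(\mathscr{R}_0)>1$ and let $v\gg 0$ be a Perron eigenvector, $\mathscr{R}_0 v=\sigma v$. Continuous dependence of ODE solutions on parameters gives $\mathscr{R}_\lambda v\to\mathscr{R}_0 v=\sigma v$ as $\lambda\to 0^+$, so for all sufficiently small $t>0$ one has $\bar{\mathcal K}(tv)=t\,\mathscr{R}_t v\gg tv$, i.e.\ $tv$ is a strict lower solution of (\ref{char}). The iterates $\{\bar{\mathcal K}^i(tv)\}$ are nondecreasing by monotonicity of $\bar{\mathcal K}$ and bounded above by $\omega_2\mathbf{1}_N$ (Corollary~\ref{cor:comparison1}); their limit $\theta^-$ is a nontrivial fixed point, and by maximality (Proposition~\ref{it}(ii)) $\theta\ge\theta^->0$. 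Coordinatewise positivity $\theta\gg 0$ then follows from the identity $\theta_k=\int m_k(a)\varphi_k(a;\theta)\,da$: by accessibility \ref{Hain6} there exists $\beta_k<\bar a_k$ for which Corollary~\ref{cor:comparison} yields $\varphi_k(a;\theta)>0$ on $(\beta_k,b)$, and since $m_k$ loads a set of positive measure in $(\beta_k,\bar a_k)$, the integral is strictly positive for every $k$.

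It remains to rule out a second nontrivial fixed point. Any nontrivial $\rho$ satisfies $\rho\gg 0$ by the same accessibility argument and $\rho\le\theta$ by Proposition~\ref{it}(ii), so $\lambda_*:=\min_i\rho_i/\theta_i\in(0,1]$. If $\lambda_*<1$ we seek a contradiction. The field $\mathbf F(w,a)=-\mathbf M(w,a)w+\mathbf D(a)w$ is strongly concave in the sense of (\ref{strongcon}): whenever $u_k>0$ and $\alpha>1$, $\alpha F_k(u,a)-F_k(\alpha u,a)=\alpha u_k\bigl(M_k(\alpha u_k,a)-M_k(u_k,a)\bigr)>0$ by the strict monotonicity (\ref{uslovM2}) of $M_k$ in $v$. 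Applying Proposition~\ref{pro:312} with $\alpha=1/\lambda_*$ and $\xi=\lambda_*\theta>0$ gives $\lambda_*\varphi_k(a;\theta)<\varphi_k(a;\lambda_*\theta)\le\varphi_k(a;\rho)$ for $a\in(\beta_k,b)$; integrating against $m_k$ over the fertility interval then yields the strict inequality $\lambda_*\theta_k<\rho_k$ for every $k$, hence $\lambda_*<\min_i\rho_i/\theta_i=\lambda_*$, which is impossible. Thus $\lambda_*=1$ and $\rho=\theta$. The most delicate step is this last one: extracting the strict inequality $\lambda_*\theta_k<\rho_k$ requires the simultaneous use of strong concavity of $\mathbf F$, accessibility via \ref{Hain6}, and the precise inclusion $\beta_k<\bar a_k$ so that the birth kernel $m_k$ actually picks up the strict gain coming from Proposition~\ref{pro:312}.
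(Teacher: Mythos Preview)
Your proof is correct and follows essentially the same route as the paper: both handle $\sigma(\mathscr{R}_0)\le 1$ by deriving $\mathscr{R}_0 e\gg e$ from a hypothetical nontrivial fixed point (you finish via Collatz--Wielandt, the paper via the spectral-radius limit), both exhibit a strictly positive lower solution from the Perron eigenvector when $\sigma(\mathscr{R}_0)>1$, and both prove uniqueness by the Krasnoselskii--Zabreiko scaling argument (you work directly with $\varphi$ and Proposition~\ref{pro:312}, the paper equivalently through the strict monotonicity of $\lambda\mapsto\mathscr{R}_\lambda$). One minor redundancy: your separate accessibility argument for $\theta\gg0$ is unnecessary, since Proposition~\ref{it}\ref{itemK3} applied to the lower solution $tv\gg0$ already yields $\theta\ge tv\gg0$.
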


\begin{proof}
First let us assume that $\sigma(\mathscr{R}_{0})\leq 1$  and suppose by contradiction that $\Kop \rho=\rho$ for some $\rho>0$. Let $\lambda=\|\rho\|$ and $e=\rho/\lambda$, then by (\ref{KR}) and (\ref{Ymono0}),
$$
\Rol e\gg\mathscr{R}_\lambda e=\half{1}{\lambda} \Kop \lambda e=\half{1}{\lambda}\Kop \rho=\half{1}{\lambda}\rho=e.
$$
The latter easily implies that there exists $t>1$ such that $\Rol e\ge te$. On iterating the obtained inequality yields $\Rol^k e\ge t^ke$, thus
$$
\sigma(\Rol)=\lim_{k\to \infty}\|\Rol^k\|^{1/k}\ge t>1,
$$
a contradiction.

Now suppose that $\sigma(\mathscr{R}_{0}) >1$. By Corollary~\ref{cor:Krein}, there exists a positive eigenvector  $e_0\gg0$ of $\mathscr{R}_{0}$.
Since $e_0\gg0$ there exists $\lambda>0$ such that $\lambda e_0\ge \skob{\omega_2 }$, where $\omega_2 $ is defined by (\ref{omegadef}).  By (\ref{omme}), $\rho^+:=\lambda e_0\in  Q^+$, hence Lemma ~\ref{it} implies that
$$
\theta=\lim_{i\to \infty }\Kop ^i\rho^+\in \Char
$$
is a  solution to (\ref{char}). On the other hand, since $\sigma(\mathscr{R}_{0}) >1$ we have
$$
\mathscr{R}_{0}e_0=\sigma(\mathscr{R}_{0})e_0\gg e_0.
$$
hence, by the continuity argument for some $\lambda>0$ small enough there holds
$$
\mathscr{R}_{\lambda}e_0\gg e_0.
$$
Therefore, setting $\rho^-:=\lambda e_0$ we obtain
$$
\Kop \rho^-=\Kop\lambda e_0=\lambda \mathscr{R}_{\lambda}e_0\gg \lambda e_0=\rho^-,
$$
i.e. $\rho^-$ is an lower solution of (\ref{char}). In other words, $\rho^-\in \Char^{\mathrm{low}}$, thus  \ref{itemK3} of Proposition~\ref{it} yields
$$
\theta\ge \rho^-\gg0,
$$
thus $\theta$ is a nontrivial solution.

In order to establish the uniqueness of a nontrivial solution (i.e. that $\mathrm{card} (\Char)=1$), we will follow the idea of Krasnoselskii and Zabreiko from \cite[Ch.~6]{KrZa}. To this end, let us suppose that $\theta_1,\theta_2$ be two nontrivial solutions to (\ref{char}). Then $\theta_1,\theta_2\gg0$. If $\theta_1\ne \theta_2$ then at least one of inequalities $\theta_1\leq \theta_2$ and $\theta_2\leq \theta_1$ is not valid. Suppose that $\theta_1\leq \theta_2$ is \textit{not} satisfied. Since $\theta_1\gg 0=0\cdot \theta_2$, the set $\{\lambda\ge0: \theta_1\geq \lambda\cdot  \theta_2\}$ is non-empty and the following supremum is well-defined
$$
\lambda_0=\sup\{\lambda\ge0: \theta_1\geq \lambda \theta_2\}.
$$
Since $\theta_1\gg 0$ there exists $\epsilon>0$ such that $\theta_1\ge \epsilon \theta_2$, hence $\lambda_0\ge \epsilon>0$. On the other hand, by the assumption $\theta_1\not \leq \theta_2$, therefore  we also have $1\not\in \{\lambda\ge0: \theta_1\geq \lambda \theta_2\}$, thus $\lambda_0\i\mathbf{n}(0,1)$. By the continuity,
$ \theta_1\ge \lambda_0 \theta_2,$
by the monotonicity of  $\Kop$ and $\lambda_0<1$ one has
\begin{align*}
\theta_1&=\Kop \theta_1 \geq \Kop(\lambda_0 \theta_2) =\lambda_0\mathscr{R}_{\lambda_0}(\theta_2)\gg
\lambda_0\mathscr{R}_{1}(\theta_2)\\
&=\lambda_0\Kop \theta_2=\lambda_0 \theta_2,
\end{align*}
Thus, $\theta_1\gg\lambda_0 \theta_2$, implying $\theta_1\ge(\delta+\lambda_0) \theta_2$ for some small positive $\delta$. The latter inequality contradicts the definition of $\lambda_0$. This finishes the proof of the uniqueness.
%
%
%
\end{proof}

\subsection{Asymptotic behaviour of a general solution of (\ref{rhoind1})}
Let us return to the general equation (\ref{rhoind1}). If the initial distribution of population vanishes: $\mathbf{n}(a,0)=\mathbf{f}(a)=0$, the uniqueness of solution of \eqref{sys1} immediately implies that the population density  $\mathbf{n}(a,t)\equiv 0$ for all $a,t\ge0$. This conclusion also holds true even under a weaker assumption that $\mathcal{F}\mathbf{f}\equiv 0$. The latter is evident from the biological point of view: the population disappears if its initial distribution is older that the maternity period. Taking into account these observations, it is naturally to assume that
\begin{equation}\label{nonF}
\mathcal{F}\mathbf{f}\not\equiv 0.
\end{equation}
The main result of this section states that  under this assumption, any solution of (\ref{rhoind1}) behaves asymptotically as the maximal solution.

\begin{theorem}\label{th:est}
Let $\chi$ be the   solution to $(\ref{rhoind1})$ satisfying \eqref{nonF}. Then
\begin{equation}\label{dichot}
\lim_{t\to\infty}\chi(t)=\theta.
\end{equation}
\end{theorem}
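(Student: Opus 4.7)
The strategy is to bracket $\chi(t)$ between two monotone sequences of iterates of $\mathscr{L}_\mathbf{f}$ whose asymptotic tails are controlled by Lemma~\ref{lem:shift} and Proposition~\ref{it}. For the upper side, set $U^{(0)}\equiv 2\omega_2\mathbf{1}_N$. Lemma~\ref{lemOmega} gives $\mathscr{L}_\mathbf{f} U^{(0)}\le 2\omega_2\mathbf{1}_N=U^{(0)}$ (so $U^{(0)}$ is an upper solution of $\rho=\mathscr{L}_\mathbf{f}\rho$), and since Proposition~\ref{gen_sol} yields $\chi\le 2\omega_2\mathbf{1}_N$, the monotonicity of $\mathscr{L}_\mathbf{f}$ makes $U^{(i)}:=\mathscr{L}_\mathbf{f}^i U^{(0)}$ a nonincreasing sequence with $U^{(i)}\ge\chi$ for all $i$. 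By Lemma~\ref{lem:shift}, each $U^{(i)}$ is constant on $[iA_m,\infty)$ with tail $\Kop^i(2\omega_2\mathbf{1}_N)$; since $2\omega_2\mathbf{1}_N\in Q^+$, Proposition~\ref{it} forces $\Kop^i(2\omega_2\mathbf{1}_N)\downarrow\theta$. Thus for any $\epsilon>0$, choosing $i$ with this tail at most $\theta+\epsilon\mathbf{1}_N$ yields $\chi(t)\le\theta+\epsilon\mathbf{1}_N$ for $t\ge iA_m$, so $\limsup_{t\to\infty}\chi(t)\le\theta$.

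If $\sigma(\Rol)\le 1$, Theorem~\ref{th:main} gives $\theta=0$ and the upper bound alone suffices. Assume henceforth $\sigma(\Rol)>1$, so $\theta\gg 0$, and recall from the proof of Theorem~\ref{th:main} that $\Kop(\lambda e_0)\gg\lambda e_0$ for small $\lambda>0$, where $e_0$ is the Perron eigenvector of $\Rol$. Define $\nu:=\liminf_{t\to\infty}\chi(t)$ componentwise. For $\epsilon>0$, pick $T_\epsilon$ with $\chi(s)\ge\nu-\epsilon\mathbf{1}_N$ for all $s\ge T_\epsilon$; then for $t\ge T_\epsilon+A_m$, monotonicity of $\Phi$ in its initial data applied to $\chi(t)=\int_{a_m}^{A_m}\mathbf{m}(a)\Phi(a;\chi,t-a)\,da$ (valid since $\mathcal{F}\mathbf{f}$ vanishes for $t\ge A_m$) gives $\chi(t)\ge\Kop(\nu-\epsilon\mathbf{1}_N)$. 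Taking liminf in $t$ and letting $\epsilon\to 0$ by continuity of $\Kop$, one obtains $\nu\ge\Kop(\nu)$, so $\nu$ is an upper solution of the characteristic equation. If $\nu\ne 0$, the strong positivity of $\Kop$ (Lemma~\ref{lemma20} with \ref{Hain6}) forces $\Kop(\nu)\gg 0$, hence $\nu\ge\Kop(\nu)\ge\lambda e_0$; then $\Kop^i(\nu)$ is nonincreasing but bounded below by $\Kop^i(\lambda e_0)$, which is nondecreasing with limit $\theta$ (by Theorem~\ref{th:main}). This forces $\lim_i\Kop^i(\nu)=\theta$ and $\nu\ge\theta$, which combined with the upper bound yields $\nu=\theta$ and $\chi(t)\to\theta$.

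The remaining step, excluding $\nu=0$ when $\sigma(\Rol)>1$, is the main technical obstacle. My plan is to exploit the nonzero seed $\chi\ge\mathcal{F}\mathbf{f}\not\equiv 0$ combined with the chain $\chi\ge\mathcal{K}^n\chi\ge\mathcal{K}^n(\mathcal{F}\mathbf{f})$ for all $n\ge 1$: iterating the convolution-type positivity Lemma~\ref{lem:conv} together with accessibility~\ref{Hain6}, the support of $\mathcal{K}^n(\mathcal{F}\mathbf{f})$ enlarges by $[a_m,A_m]$ at each step and spreads strict positivity across all patches, so after finitely many iterations one obtains a lower bound $\chi(t)\ge c\mathbf{1}_N$ on a common interval $[T_*,T_*+L]$ of arbitrary length $L$ arbitrarily far in the future. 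The delicate step is to convert this positivity on a long (but bounded) interval into a uniform lower bound $\chi_k(t)\ge\epsilon>0$ for some $k$ and all $t$ large enough. For this I plan a linearization argument near zero: the continuity of $\mathbf{M}$ with $M_k(0,a)=\mu_k(a)$, together with \ref{Hain2}, gives $\Phi(a;\rho)\ge(1-\omega(\|\rho\|_\infty))Y(a;\rho)$ on small $\rho$ with $\omega(s)\to 0$; since the linear operator governing this linearization is $\Rol$ with $\sigma(\Rol)>1$, iterating the linearized Volterra inequality starting from the positive seed above produces geometric growth with ratio exceeding $1$, precluding the possibility that all components of $\chi$ have vanishing $\liminf$. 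Balancing this geometric growth against the nonlinear correction, uniformly in time, is the technical core of the argument.
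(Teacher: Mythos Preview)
Your upper bound (first paragraph) is correct and matches the paper's Lemma~\ref{lemmaupp}. Your second paragraph---showing that $\nu:=\liminf_{t\to\infty}\chi(t)$ satisfies $\nu\ge\Kop\nu$, and that if $\nu\ne 0$ then $\nu\gg 0$ and iteration squeezes $\nu$ to $\theta$---is also correct and is a slightly different (and arguably cleaner) route than the paper's Lemma~\ref{lemmalow}.

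The genuine gap is your third paragraph: excluding $\nu=0$ when $\sigma(\Rol)>1$. You correctly identify this as the main obstacle, but what you offer is a plan, not a proof. The spreading of positivity via Lemma~\ref{lem:conv} gives $\chi(t)\ge c\mathbf{1}_N$ on a long interval $[T_*,T_*+L]$, but converting this to $\liminf>0$ via a linearization $\Phi(a;\rho)\ge(1-\omega(\|\rho\|))\mathbf{Y}(a;\rho)$ is delicate: the relevant iteration is a \emph{convolution} $\chi(t)=\int_{a_m}^{A_m}\mathbf{m}(a)\Phi(a;\chi(t-a))\,da$, not a pure matrix iteration of $\Rol$, so even if $\chi$ is small on part of $[t-A_m,t-a_m]$ and moderate elsewhere, the growth factor is not simply $\sigma(\Rol)$. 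You would need a renewal-inequality argument uniform in $t$, which you have not supplied.

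The paper closes this gap by a different mechanism: concavity, not linearization. It constructs an explicit lower solution $\rho^-\in S_M$ of $\rho=\mathscr{L}_\mathbf{f}\rho$ with nonzero tail $\rho^-_\infty=\lambda\theta$, namely $\rho^-(t)=\rho^{(j)}(t)$ on $[0,M]$ (where $\rho^{(j)}=\mathscr{L}_\mathbf{f}^j 0$) and $\rho^-(t)=\lambda\theta$ for $t>M$. The key inequality is Corollary~\ref{cor:cf}, $\varphi(a;\lambda\theta)\ge\lambda\varphi(a;\theta)$ for $0<\lambda<1$, which makes the tail contribution automatically a lower solution of the characteristic equation; Lemma~\ref{lem:conv} is used only to make $\rho^{(j)}$ strictly positive on a window of length $2A_m$ around $M$, so that for small $\lambda$ the splice at $t=M$ preserves the lower-solution property. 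Once such a $\rho^-$ exists, monotone iteration from it converges up to $\chi$ with tails $\Kop^i(\lambda\theta)\uparrow\theta$, giving $\liminf\chi\ge\theta$ directly. This bypasses any need to control the nonlinear correction uniformly in time.
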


We start with two results describing the upper and lower solutions to equation (\ref{rhoind1}).

\begin{lemma}\label{lemmaupp}
Let $\chi$ be a solution to $(\ref{rhoind1})$. Then
\begin{equation}\label{limsup}
\limsup_{t\rightarrow\infty}\chi(t)\leq \theta,
\end{equation}
where the latter inequality should be understood component-wise.
\end{lemma}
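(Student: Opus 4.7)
The plan is to control $\chi(t)$ for large $t$ by a decreasing sequence of constant vectors obtained by iterating $\Kop$ on an initial upper bound, then invoke the convergence of that sequence to the maximal solution $\theta$.

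First, I would establish a uniform bound. Since $\chi=\mathcal{K}\chi+\mathcal{F}\mathbf{f}$ and both operators take values in $Q^-$ by Lemma~\ref{lemOmega}, we get $\chi(t)\le 2\omega_2\mathbf{1}_N=:\theta^{(0)}\in Q^+$ for all $t\ge0$. Moreover, since $\mathbf{m}$ is supported in $[a_m,A_m]$, condition~\ref{Hain4} implies $\mathcal{F}\mathbf{f}(t)=0$ for $t\ge A_m$, so
\begin{equation*}
\chi(t)=\mathcal{K}\chi(t)=\int_0^{A_m}\mathbf{m}(a)\Phi(a;\chi,t-a)\,da,\qquad t\ge A_m.
\end{equation*}

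Next I would run an induction. Define $\theta^{(i+1)}=\Kop\theta^{(i)}$. The claim is that for each $i\ge 0$ there exists $T_i\ge 0$ such that $\chi(t)\le\theta^{(i)}$ for all $t\ge T_i$. The base case $i=0$ is the uniform bound with $T_0=0$. For the inductive step, assume $\chi(s)\le\theta^{(i)}$ for all $s\ge T_i$ and pick $t\ge T_i+A_m$. Then for every $a\in[0,A_m]$ we have $t-a\ge T_i$, hence $\chi(t-a)\le \theta^{(i)}$. Since $\Phi$ is non-decreasing in its initial-data argument (Lemma~\ref{propPhi}) and since $\Phi(a;\theta^{(i)},t-a)=\varphi(a;\theta^{(i)})$ whenever the initial-data vector is constant (see \eqref{phiphi}), we obtain
\begin{equation*}
\chi(t)=\int_0^{A_m}\mathbf{m}(a)\Phi(a;\chi,t-a)\,da\le\int_0^{A_m}\mathbf{m}(a)\varphi(a;\theta^{(i)})\,da=\Kop\theta^{(i)}=\theta^{(i+1)}.
\end{equation*}
Thus $T_{i+1}:=T_i+A_m$ works.

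Finally, since $\theta^{(0)}\in Q^+$, Proposition~\ref{it} gives $\theta^{(i)}\downarrow\theta$ as $i\to\infty$. Taking $\limsup$ in the inequality $\chi(t)\le\theta^{(i)}$ valid for $t\ge T_i$ yields $\limsup_{t\to\infty}\chi(t)\le\theta^{(i)}$ for every $i$, and passing to the limit in $i$ proves \eqref{limsup}.

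The only non-routine step is the inductive passage: one must make sure that the monotone dependence of $\Phi$ on the initial-value \emph{function} (not just vector) is the right way to transform a pointwise bound on $\chi$ into a bound by $\varphi(a;\theta^{(i)})$, which requires both the comparison principle of Proposition~\ref{pro:comparison} and the constant-data identity \eqref{phiphi}. The bookkeeping of the shift by $A_m$ at each iteration is essential so that the integrand is evaluated at times where the inductive hypothesis already applies.
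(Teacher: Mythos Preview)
Your proof is correct and rests on the same core mechanism as the paper's: iterate $\Kop$ starting from a constant vector in $Q^+$, and use the $A_m$-shift at each step to propagate a pointwise bound on $\chi$ forward in time. The paper takes a slightly less direct route. Instead of bounding $\chi$ immediately, it introduces an auxiliary decreasing sequence $\chi^{(j)}=\mathscr{L}_{\mathbf f}^{\,j}\rho^+$ (starting from a constant stationary upper solution $\rho^+$), shows that $\chi^{(j)}(t)=\Kop^{\,j}\rho^+$ for $t\ge jA_m$ via Lemma~\ref{lem:shift}, and then invokes the uniqueness part of Proposition~\ref{gen_sol} to identify $\lim_j\chi^{(j)}$ with $\chi$. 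Your argument bypasses both the auxiliary sequence and the appeal to uniqueness by working directly with the integral equation $\chi=\mathcal K\chi$ valid for $t\ge A_m$; this is shorter and more transparent. The trade-off is minor: the paper's framework with $\chi^{(j)}$ is reused symmetrically in Lemma~\ref{lemmalow} for the lower-solution half, so its setup amortizes across the two lemmas, whereas your approach is tailored to the upper bound.
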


\begin{proof}
Let  $\rho^+$ be an arbitrary stationary upper solution to (\ref{rhoind1}), i.e.
\begin{equation}\label{uppK}
\rho^+\ge\mathscr{L}_\mathbf{f}\rho^+.
\end{equation}
Notice that that the class of stationary upper solutions is nonempty. Indeed, it follows from (\ref{Omega}) that, for example, $2(\omega_2 +\epsilon)\mathbf{1}$ is such a an upper solution for any $\epsilon>0$.  Now, let us define the iterative sequence  by
\begin{align*}
\rho^{(i)}&=\Kop^{i+1} \rho^+ \text{ for }i\ge0 \text{ and } \rho^{(0)}=\rho^+,\\
\chi^{(i)}&=\mathscr{L}_\mathbf{f}^{i+1} \rho^+ \text{ for }i\ge0 \text{ and } \chi^{(0)}=\rho^+.
\end{align*}
Then applying the argument of the proof of Proposition~\ref{it} yields  that $\{\rho^{(i)}\}$ is non-increasing:
$$
\rho^{(i+1)}\leq\rho^{(i)}, \quad \forall i\ge0.
$$
Also, since $\rho^{(i)}$ is a constant vector function, it follows by Lemma~\ref{lem:shift} that $\mathscr{L}_\mathbf{f} \rho^{(i)}\in S_{A_m}$ and also that
\begin{equation*}
\mathscr{L}_\mathbf{f} \rho^{(i)}(t) \equiv \Kop\rho^{(i)}(t),\quad \forall t\ge A_m.
\end{equation*}

We claim that for any $j\ge0$
\begin{enumerate}
\item[(a)] $\chi^{(j+1)}\le \chi^{(j)}$ for all $t\ge0$;
  \item[(b)] $\chi^{(j)}= \rho^{(j)}$ for $t\ge jA_m$.
\end{enumerate}
The proof is by induction. Notice that (b) holds trivially for  $j=0$, and  by the assumption (\ref{uppK})
$$
\chi^{(1)}=\mathscr{L}_\mathbf{f}\chi^{(0)}=
\mathscr{L}_\mathbf{f}\rho^{+}\le \rho^+=\chi^{(0)}
$$
which yields (a) for $j=0$.  Let the claims (a)--(b) hold true for some $j\ge 1$. Then  (a) follows from the monotonicity of $\mathscr{L}_\mathbf{f}$:
$$
\chi^{(j+1)}=\mathscr{L}_\mathbf{f}\chi^{(j)}\le
\mathscr{L}_\mathbf{f}\chi^{(j-1)}=\chi^{(j)}.
$$
Furthermore by the assumption $\chi^{(j)}\in S_{jA_m}$ and $\chi^{(j)}_\infty= \rho^{(j)}$. Hence Lemma~\ref{lem:shift} yields $$\chi^{(j+1)}=\mathscr{L}_\mathbf{f}\chi^{(j)}\in S_{(j+1)A_m}
$$
and
$$
\chi^{(j+1)}_\infty=(\mathscr{L}_\mathbf{f}\chi^{(j)})_\infty=\Kop \chi^{(j)}_\infty =
\Kop \rho^{(j)}=\rho^{(j+1)},
$$
which yields (b) for $j+1$.

Next, it follows from (a) and the boundedness of the image of $\mathcal{L}$ that $\{\chi^{(j)}\}$ is non-increasing and   bounded from below, thus has a limit which obviously is a solution of (\ref{rhoind1}). By the uniqueness,  $\lim_{j\to\infty}\chi^{(j)}(t)=\chi(t)$.  Now, let $1\le k\le N$. Then the sequence of the coordinate functions $\chi_k^{(j)}(t)$ is non-increasing  with respect to $j$ and $\lim_{j\to\infty}\chi_k^{(j)}(t)=\chi_k(t)$. Let $\epsilon>0$. Since $\lim_{j\to\infty}\rho_k^{(j)}=\theta_k$, there exists $j_0$ such that  $\theta_k\le \rho_k^{(j)}\le \theta_k+\epsilon$ for all $j\ge j_0$. This implies that $\chi_k^{(j)}(t)\le \theta_k+\epsilon$ for all $j\ge j_0$ and $t\ge jA_m$. Passing to the limit $j\to \infty$ we obtain $\chi_k(t)\le \theta_k+\epsilon$ for $t\ge jA_m$ which easily implies (\ref{limsup}).

\end{proof}

\begin{lemma}\label{lemmalow}
Let $\chi$ be a solution to  \eqref{rhoind1}. If there exists a lower solution $\rho^-$ to \eqref{rhoind1}, i.e. $\mathscr{L}_\mathbf{f}\rho^-\ge \rho^-$ such that $\rho^-\in S_M $ for some $M\ge0$ and $\rho^-_\infty\ne0$
then $\lim_{t\rightarrow\infty}\chi(t)=\theta$.
\end{lemma}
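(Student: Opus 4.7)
The plan is to establish the matching lower estimate $\liminf_{t\to\infty}\chi(t) \ge \theta$ complementing Lemma~\ref{lemmaupp}, by running a monotone \emph{non-decreasing} iteration starting from $\rho^-$ in place of the non-increasing one used for the upper bound. Set $\chi^{(0)} := \rho^-$ and $\chi^{(j+1)} := \mathscr{L}_\mathbf{f}\chi^{(j)}$. The hypothesis $\mathscr{L}_\mathbf{f}\rho^- \ge \rho^-$ together with the monotonicity of $\mathscr{L}_\mathbf{f} = \mathcal{K} + \mathcal{F}\mathbf{f}$ from Lemma~\ref{lemOmega} gives by induction that $\chi^{(j+1)} \ge \chi^{(j)}$ for all $j$. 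Iterating Lemma~\ref{lem:shift} also yields $\chi^{(j)} \in S_{M+jA_m}$ with tail value $\chi^{(j)}_\infty = \Kop^j\rho^-_\infty$.

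Next I would show that $\chi^{(j)}\to\chi$ uniformly on every bounded interval. The telescoping identity $\chi^{(j+1)}-\chi = \mathcal{K}\chi^{(j)} - \mathcal{K}\chi$ (the terms $\mathcal{F}\mathbf{f}$ cancel) combined with the Lipschitz bound for $\mathcal{K}$ from Lemma~\ref{lemOmega}, in the convolution manner of the proof of Proposition~\ref{gen_sol}, yields an estimate of the form $|\chi^{(j)}(t)-\chi(t)|\le \frac{(Ct)^j}{j!}\|\rho^- - \chi\|_{L^\infty(0,t)}$, which vanishes on any bounded interval (the right-hand norm is finite since $\rho^-$ is eventually constant and $\chi\in L^\infty$ by Proposition~\ref{gen_sol}). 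Monotonicity then gives $\chi^{(j)}(t) \le \chi(t)$ for every $t$ and $j$.

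To identify the tail limit, I would first take $t\to\infty$ in the inequality $\mathscr{L}_\mathbf{f}\rho^-\ge\rho^-$ (using Lemma~\ref{lem:shift}) to obtain $\Kop\rho^-_\infty\ge\rho^-_\infty$, so $\rho^-_\infty$ is a lower solution of the characteristic equation~\eqref{char}. The monotonicity of $\Kop$ (Corollary~\ref{cor:comparison1}) shows $\{\Kop^j\rho^-_\infty\}$ is non-decreasing; Proposition~\ref{it}\,(iii) applied at each step (each iterate is itself a lower solution) bounds it above by $\theta$. Its limit $\tilde\theta$ then satisfies $\Kop\tilde\theta=\tilde\theta$ by continuity of $\Kop$ (which follows from Lemma~\ref{propPhi}) and $\tilde\theta\ge\rho^-_\infty\ne0$. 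Since the existence of a nonzero lower solution of~\eqref{char} forces $\theta\ne0$, Theorem~\ref{th:main} guarantees $\theta$ is the \emph{unique} nontrivial fixed point of $\Kop$, so $\tilde\theta=\theta$.

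To conclude: given $\epsilon>0$, choose $j_0$ with $\Kop^{j_0}\rho^-_\infty \ge \theta - \epsilon\mathbf{1}_N$; then for all $t\ge M+j_0 A_m$ one has
$$
\chi(t)\ge \chi^{(j_0)}(t) = \Kop^{j_0}\rho^-_\infty \ge \theta - \epsilon\mathbf{1}_N,
$$
so $\liminf_{t\to\infty}\chi(t)\ge\theta$, and Lemma~\ref{lemmaupp} finishes the proof. The main technical point is the second step: verifying that the iterates starting from the lower solution (rather than from zero as in Proposition~\ref{gen_sol}) still converge to the unique solution $\chi$ of~\eqref{rhoind1}. The Picard-type estimate does carry over on every compact time interval because $\rho^-$ is bounded, but one must be careful that the convergence is only local in $t$, which is why the monotonicity of $\chi^{(j)}$ is essential to extract the uniform-in-$t$ lower bound in the final paragraph.
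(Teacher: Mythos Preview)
Your proof is correct and follows essentially the same approach as the paper's: a monotone nondecreasing iteration $\chi^{(j)}=\mathscr{L}_\mathbf{f}^j\rho^-$ whose tail values $\Kop^j\rho^-_\infty$ increase to the unique nontrivial fixed point $\theta$, combined with Lemma~\ref{lemmaupp}. The only minor variant is in how you identify $\lim_j\chi^{(j)}=\chi$: you use a Picard--Gronwall estimate (as in the proof of Proposition~\ref{gen_sol}), whereas the paper observes that the monotone bounded pointwise limit solves~\eqref{rhoind1} and then invokes the uniqueness part of Proposition~\ref{gen_sol}; note also that your reference to Proposition~\ref{it}\,(iii) should be~(ii).
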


\begin{proof}
As above, let us consider the sequence of iterations
\begin{align*}
\chi^{(j)}&=\mathscr{L}_\mathbf{f}^j\chi^{(0)} \text{ for }i\ge0 \text{ and }  \chi^{(0)}(t)=\rho^{-},\\
\rho^{(j)}&=\Kop^j\rho^{(0)} \text{ for }i\ge0 \text{ and } \rho^{(0)}(t)=(\rho^{-})_\infty,
\end{align*}
By Lemma~\ref{lem:shift}, $\chi^{(j)}\in S_{M+jA_m}$. Furthermore, by (\ref{koop})
$$
\chi^{(1)}_\infty=(\mathscr{L}_\mathbf{f}\rho^-)_\infty=\Kop \rho^{-}_\infty=\rho^{(1)}.
$$
Using an induction argument readily yields
\begin{equation}\label{rhochi}
\chi^{(j)}_\infty=\rho^{(j)}, \quad \forall j\ge0.
\end{equation}

Since $\mathscr{L}_\mathbf{f}\rho^-\ge \rho^-$, we have $\chi^{(1)}\ge \chi^{(0)}$, thus by the monotonicity of $\mathscr{L}_\mathbf{f}$, $\chi^{(j+1)}\ge \chi^{(j)}$. This proves that $\{\chi^{(j)}(t)\}$ is a  nondecreasing sequence. Furthermore, (\ref{rhochi}) implies that
$$
\rho^{(j+1)}=\chi^{(j+1)}_\infty\ge \chi^{(j)}_\infty=\rho^{(j)},
$$
thus, $\{\rho^{(j)}\}$ is also a  nondecreasing sequence. Furthermore, since $\rho^-_\infty\ne0$, we have that $\rho^{(j)}\gg 0$ for $j\ge1$. By Lemma~\ref{lemOmega} the both sequences are bounded from above by $\omega_2 \mathbf{1}_N$. Thus, the limits $\rho:=\lim_{j\to \infty}\rho^{(j)}$ and $\bar\chi:=\lim_{j\to \infty}\chi^{(j)}(t)$  exist and  solve $\Kop\rho=\rho$ and   $\mathscr{L}_\mathbf{f}\bar \chi=\bar \chi$, respectively, where $\rho\gg0$. By the corresponding  uniqueness results, we have $\rho=\theta$ and $\bar\chi=\chi$. Arguing as in Lemma~\ref{lemmaupp}, we obtain $\liminf_{t\rightarrow\infty}\chi(t)\geq \theta$ (the latter is understood component-wise). Hence (\ref{limsup}) implies the existence of the limit $\lim_{t\rightarrow\infty}\chi(t)=\theta$.
\end{proof}

\begin{proof}[Proof of Theorem \ref{th:est}]

If $\sigma(\Rol)\leq 1$, then Theorem \ref{th:main} yields $\theta \equiv 0$, then (\ref{limsup}) immediately yields (\ref{dichot}). Therefore we shall suppose that $\sigma(\Rol)>1$.  Let $\chi$ be the unique solution to (\ref{rhoind1}) and let $\theta\gg0$ be the unique maximal solution  of (\ref{char}). By Lemma~\ref{lemmalow}, it suffices to show that there exists a lower solution $\rho^-$ to (\ref{rhoind}) such that $\rho^-\in S_M $ for some $M\ge0$ and $\rho^-_\infty\ne0$. In the remained part of the proof we shall construct such a solution. Let us consider an auxiliary sequence of iterations
\begin{align*}
\rho^{(j)}=\mathscr{L}_\mathbf{f}\rho^{(0)}  \text{ for } j\ge1 \text{ and } \rho^{(0)}\equiv 0.
\end{align*}
We claim that the new function $\rho^-(t)$ defined by
\begin{equation}\label{rho-}
\rho^-(t)=
\left\{
  \begin{array}{ll}
    \rho^{(j)}(t), & 0\leq t\leq M, \\
    \lambda\theta , &  t>M,
  \end{array}
\right.
\end{equation}
is a lower solution to equation (\ref{rhoind1}) for certain $M>A_m$,  sufficiently large $j\ge1$ and sufficiently small  $\lambda>0$ to be specified later. To this end, first notice that
$$
\rho^{(1)}=\varphi:=\mathcal{F} \mathbf{f} \ge 0=\rho^{(0)},
$$
hence using an induction by $j\ge1$, one gets
$$
\rho^{(j+1)}=\mathscr{L}_\mathbf{f}\rho^{(j)} \ge \mathscr{L}_\mathbf{f}\rho^{(j-1)} =\rho^{(j)},
$$
i.e. the sequence $\rho^{(j)}$ is non-decreasing in $j$. It also follows from the latter inequality that $\rho^{(j)}\le \mathscr{L}_\mathbf{f}\rho^{(j)} $, i.e. $\rho^{(j)}$ is a lower solution to   (\ref{rhoind1}). Hence, $\rho^{-}(t)$ defined by (\ref{rho-}) is a lower solution to (\ref{rhoind}) in the interval $t\in[0,M]$. In particular,
\begin{equation*}
(\mathscr{L}_\mathbf{f}\rho^-)(t)-\rho^-(t)\ge 0\text{ for }t\in[0,M].
\end{equation*}

 Next, we assume that  $t\in[M,M+A_m]$. By the assumption $M>A_m$, hence one has $(\mathcal{F}\mathbf{f})(t)=0$ and $\mathscr{L}_\mathbf{f}\rho^-=\mathcal{K}\rho^-$. We have by (\ref{rho-}) and condition~\ref{Hain4} that
\begin{align*}
\mathcal{K}\rho^-(t)&= \int_{0}^{A_m}\mathbf{m}(a)\Phi(a;\rho^{-},t-a)\,da  \\
&= \int_{0}^{t-M}\mathbf{m}(a)\varphi(a;\lambda \theta )\,da + \int_{t-M}^{A_m}\mathbf{m}(a)\Phi(a;\rho^{(j)},t-a)\,da
\end{align*}
On the other hand, since $\Kop\theta=\theta $, we have
\begin{align*}
\theta &= \int_0^{A_m}\mathbf{m}(a)\varphi(a;\theta )\,da.
\end{align*}
This yields by virtue of $\rho^-(t)=\lambda\theta $ for $t\in(M,M+A_m)$ and (\ref{phiphi}) that
\begin{align}
(\mathscr{L}_\mathbf{f}\rho^--\rho^-)(t) &=(\mathcal{K}\rho^--\rho^-)(t)=(\mathcal{K}\rho^--\theta )(t) \nonumber\\
&= \int_{0}^{t-M}\mathbf{m}(a)(\varphi(a;\lambda\theta )-\lambda\varphi(a;\theta ))\,da \label{int51}\\
&\quad + \int_{t-M}^{A_m}\mathbf{m}(a)(\Phi(a;\rho^{(j)},t-a) -\lambda\Phi(a;\theta,t-a ))\,da. \label{int52}
\end{align}
We claim that the integrals (\ref{int51}) and (\ref{int52}) are nonnegative. The first integral is nonnegative by virtue of Corollary~\ref{cor:cf}.
To show that (\ref{int52}) is nonnegative, let us estimate function
$\Phi_k(a;\rho^{(j)},t-a)$ from below. By \ref{Hain6}, $m_k(a)\ge \delta>0$ for all $a\in[a_k,b_k]$, where $a_k\geq\beta_k$ and $b_k$ are the same as in Lemma~\ref{lem:conv}. Since ${\mathcal F}(\mathbf{f})$ is not identically zero, there exists an interval $[s_1,s-2]$, where this function is positive. Applying Lemma~\ref{lem:conv} for $\rho=\rho^{(1)}={\mathcal F}(\mathbf{f})$, we get that
$$
({\mathcal K}\rho^{(1)})_k(t)>0\;\;\;\mbox{for $t\in [s_1+a_k,s_2+b_k]$}.
$$
Therefore
$$
\rho^{(2)}(t)={\mathcal K}\rho^{(1)}(t)+{\mathcal F}\mathbf{f}(t)>0\,\;\;\mbox{for $t\in[s_1+a_k,s_2+b_k]$, $k=1,\ldots,N$,}
$$
and, in particular this is true for $k=1$. Repeating this argument yields
$$
\rho^{(j)}(t)>0\;\;\;\mbox{for $t\in[s_1+(j-1)a_1,s_2+(j-1)b_1]$}.
$$
This implies that
$$
\Phi_k(a;\rho^{(j)},t-a)>0\;\;\;\mbox{for $a\geq\beta_k$ and $t-a\in [s_1+(j-1)a_1,s_2+(j-1)b_1]$}.
$$
Now we choose the index $j$ and the number $M$ to satisfy
$$
[M-A_m,M+A_m]\subset [s_1+(j-1)a_1,s_2+(j-1)b_1].
$$
Then
\begin{equation}\label{Koz1}
\Phi_k(a;\rho^{(j)},t-a)>0\;\;\;\mbox{for $a\in[\beta_k,A_m]$ and $t\in[0,A_m]$}.
\end{equation}
Therefore,
$$
\Phi_k(a;\rho^{(j)},t-a)\leq\lambda\Phi_k(a;\theta,t-a)
$$
for such $a$ and $t$ if $\lambda$ is sufficiently small positive number. This gives positivity of (\ref{int52}) for
$t\geq M+a_k$. If $t\leq M+a_k$ then the first integral in (\ref{int52}) is estimated from below by
$$
\int_{a_k}^{b_k}m_k(a)\Phi_k(a;\rho^{(j)},t-a)da
$$
and it is positive for $t\in[M,M+A_m]$. Since the functions $\Phi_k$ are uniformly bounded this implies the positivity of (\ref{int52}) for $M\leq t\leq M+a_k$ when $\lambda$ is small.

Finally, if $t\geq M+A_m$, then since $\mathcal{F}\mathbf{f}(t)=0$  we have by virtue of Corollary~\ref{cor:cf} that
\begin{align*}
(\mathscr{L}_\mathbf{f}\rho^--\rho^-)_k(t)=(\mathcal{K}\rho^--\rho^-)_k(t) &= \int_{0}^{A_m}m_k(a)(\varphi_k(a;\lambda\theta ) -\lambda\varphi_k(a;\theta ))\,da  \ge0
\end{align*}
This proves that the function $\rho^-(t)$ defined by (\ref{rho-}) is a lower solution to equation (\ref{char}), therefore by Lemma \ref{lemmalow} we have the desired convergence that completes the proof.
\end{proof}

\subsection{Asymptotics of total population}
According to the assumption made in the beginning of this section, the maximal length of life is constant:  $B(t)\equiv b$. Then the total (multipatch) population $\mathbf{P}(t)$ at time $t$ is the vector-function
\begin{align}\label{total}
\mathbf{P}(t)=\int_0^{b}\mathbf{n}(a,t)\,da.
\end{align}
 Then we have the following result.

\begin{theorem}\label{th:estN}
Let $n(a,t)$ be the solution of \eqref{sys1} and let the condition \eqref{nonF} hold. Then the following dichotomy holds: if $\sigma(\Rol)\le1$ then $\mathbf{P}(t)\rightarrow 0$ as $t\rightarrow\infty$, and  if $\sigma(\Rol)>1$ then
\begin{equation}\label{assym}
\lim_{t\to\infty}\mathbf{P}(t)= \int_0^{b}\varphi(a;\theta )\,da,
\end{equation}
where $\theta $ is the maximal solution to the characteristic  equation.
\end{theorem}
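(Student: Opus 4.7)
The strategy is to reduce the asymptotic analysis of $\mathbf{P}(t)$ to the already established asymptotics of the newborns function $\rho(t)=\mathbf{n}(0,t)$ given by Theorem~\ref{th:est}. First, I would restrict attention to sufficiently large time, namely $t\ge b$. Since $B(t)\equiv b$ in the constant environment, for every $a\in[0,b]$ we have $a\le t$, so the representation \eqref{nPhiPsi} gives $\mathbf{n}(a,t)=\Phi(a;\rho,t-a)$ on the whole interval of integration. Crucially, in the time-independent setting the ODE \eqref{balanceV} defining $\Phi$ no longer depends on the parameter $y$, so by \eqref{phiphi}
\[
\Phi(a;\rho,t-a)=\varphi\bigl(a;\rho(t-a)\bigr),
\]
and therefore
\[
\mathbf{P}(t)=\int_0^{b}\varphi\bigl(a;\rho(t-a)\bigr)\,da,\qquad t\ge b.
\]

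Second, I would invoke Theorem~\ref{th:est} together with the nontriviality assumption \eqref{nonF} to conclude that $\rho(t)\to\theta$ as $t\to\infty$. The key technical step is then to pass this limit under the integral. For $a\in[0,b]$, the Lipschitz estimate \eqref{LipPhi} (with $y=t-a$, which is arbitrary in the time-independent case) yields
\[
\bigl\|\varphi(a;\rho(t-a))-\varphi(a;\theta)\bigr\|_\infty\le e^{N\|\mathbf{D}\|b}\,\bigl\|\rho(t-a)-\theta\bigr\|_\infty.
\]
Since $t-a\ge t-b\to\infty$ uniformly in $a\in[0,b]$, the convergence $\rho(s)\to\theta$ upgrades to uniform convergence of $\rho(t-a)\to\theta$ on $[0,b]$, so the right-hand side above tends to $0$ uniformly in $a$. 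Integrating over the fixed bounded interval $[0,b]$ then gives
\[
\lim_{t\to\infty}\mathbf{P}(t)=\int_0^{b}\varphi(a;\theta)\,da,
\]
which is exactly \eqref{assym}.

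Third, for the dichotomy we specialize to the two cases. When $\sigma(\Rol)>1$, Theorem~\ref{th:main} says $\theta\gg 0$ is the unique nontrivial solution of the characteristic equation, and the limit above is the stated one. When $\sigma(\Rol)\le 1$, Theorem~\ref{th:main} forces $\theta=0$; by uniqueness for \eqref{varphieq}, $\varphi(a;0)\equiv 0$, hence the limit integral vanishes and $\mathbf{P}(t)\to 0$.

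I do not expect any serious obstacle: the only thing to verify carefully is the uniform-in-$a$ passage to the limit inside the integral, which is handled cleanly by the Lipschitz bound \eqref{LipPhi} and the boundedness of $[0,b]$. The essential content of the theorem is already packaged in Theorem~\ref{th:est}; the representation \eqref{nPhiPsi} combined with \eqref{phiphi} does the rest.
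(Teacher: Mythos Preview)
Your proof is correct and follows essentially the same approach as the paper: restrict to $t\ge b$, write $\mathbf{P}(t)=\int_0^b\Phi(a;\rho,t-a)\,da$, invoke Theorem~\ref{th:est} for $\rho(t)\to\theta$, and pass to the limit under the integral. Your use of the Lipschitz bound \eqref{LipPhi} to obtain uniform convergence on $[0,b]$ is in fact slightly more explicit than the paper's appeal to ``continuity of solutions with respect to a parameter,'' but the argument is otherwise the same.
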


\begin{proof}
Denote by $\rho(a)$ the newborns function determined by $\mathbf{f}(a)$ by virtue of (\ref{rhoind1}). We have  for general $t>0$
\begin{align*}
\mathbf{P}(t)&= \int_0^{\min\{t,b\}}\Phi(a;\rho,t-a)\,da+\int_{\min\{t,b\}}^{b}\Psi(a;\mathbf{f},a-t)da.
\end{align*}
On the other hand, by \ref{Hain5}  $\supp \mathbf{f}\subset [0,b]$, hence using (\ref{nPhiPsi}) we have for any $t>b$ that
\begin{align*}
\mathbf{P}(t)&= \int_0^{b}\Phi(a;\rho,t-a)\,da.
\end{align*}
Next, by Theorem~\ref{th:main} and Theorem \ref{th:est} we have $\lim_{t\to\infty}\rho(t)=\theta$ and furthermore by (\ref{balanceV}) there holds $h(a):=\Phi(a;\rho,t-a)$ satisfies
\begin{equation}\label{balanceV1}
\renewcommand\arraystretch{1.5}
\left\{
\begin{array}{rl}
  \frac{d }{da}{h}(a)&=-\mathbf{M}({h}(a),a){h}(a)+\mathbf{D}(a){h}(a), \\
  h(0)&= \rho(t-a),
\end{array}
\right.
\end{equation}
By continuity of solutions (\ref{balanceV1}) with respect to a parameter and (\ref{varphieq}), we have for any fixed $a>0$ that $$\lim_{t\to\infty}\Phi(a;\rho,t-a)=\varphi(a;\theta).
$$
This readily yields \eqref{assym}.
%
\end{proof}

\subsection{Estimates for the net reproductive rate and for the maximal solution}\label{sec:est}

In this section we shall assume that the condition $(\ref{cond:D})$ hold, i.e.
$$
\sum_{k=1}^ND_{kj}(a)\le 0, \quad 1\le j\le N.
$$
The biological meaning of the latter inequality is that individuals do not reproduce during migration (but can die). This condition  immediately implies that
$$
D_{kk}(a)\le0.
$$
Throughout this section, we use the following notation:
$$
m(a)=\max_{1\le k\le N}m_k(a),\qquad \mu(a):=\min_{1\le k\le N}\mu_k(a).
$$

\begin{proposition}\label{prop:estsigma}
Under the made assumptions,
\begin{align}\label{est:s}
\max_{1\le k\le N}\int_0^{\infty}\!\!m_k(a)e^{-\int_0^a(\mu_k(v)+|D_{kk}(v)|)dv}\,da \le \sigma(\Rol) \le \int_0^{\infty}\!\!m(a)e^{-\int_0^a\mu(v)dv}\,da.
\end{align}
\end{proposition}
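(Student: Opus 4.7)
The strategy is to exploit the fact that $\mathscr{R}_0$, being defined via the linear system \eqref{Yax0}, is literally an $N\times N$ nonnegative matrix, and then to sandwich its spectral radius between two Collatz--Wielandt type bounds (Corollary~\ref{cor:Krein}) applied to well-chosen test vectors, the estimates on the resulting matrix entries coming from ODE comparison.

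\textbf{Setup.} Let $\mathbf{G}(a)=(G_{kj}(a))$ denote the fundamental solution of
\begin{equation*}
\frac{d\mathbf{G}}{da}=(\mathbf{D}(a)-\mathbf{M}(0,a))\mathbf{G}(a),\qquad \mathbf{G}(0)=I.
\end{equation*}
Since $\mathbf{D}(a)-\mathbf{M}(0,a)$ is a Metzler matrix, Lemma~\ref{lemma2} applied column by column gives $G_{kj}(a)\ge 0$. Then $\mathbf{Y}(a;x)=\mathbf{G}(a)x$ and $\mathscr{R}_0$ is represented by the nonnegative matrix $R_{kj}=\int_0^\infty m_k(a)G_{kj}(a)\,da$.

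\textbf{Lower bound.} By the Collatz--Wielandt identity in Corollary~\ref{cor:Krein}, $\sigma(\mathscr{R}_0)\ge \min_{i:\,x_i>0}(\mathscr{R}_0 x)_i/x_i$ for every $x\ge 0$, $x\ne 0$. Apply this to $x=e_k$ (a coordinate vector), whose only positive component is the $k$-th, to get $\sigma(\mathscr{R}_0)\ge (\mathscr{R}_0 e_k)_k=\int_0^\infty m_k(a)G_{kk}(a)\,da$. It remains to estimate $G_{kk}$ from below: since
\begin{equation*}
\frac{d}{da}G_{kk}(a)=(-\mu_k(a)+D_{kk}(a))G_{kk}(a)+\sum_{j\ne k}D_{kj}(a)G_{jk}(a),
\end{equation*}
and the last sum is nonnegative (because $D_{kj}\ge 0$ for $j\ne k$ and $G_{jk}\ge 0$), one obtains $G'_{kk}\ge -(\mu_k(a)+|D_{kk}(a)|)G_{kk}$ after using $D_{kk}\le 0$, which follows from \eqref{cond:D} together with the Metzler property. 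Integrating and taking the maximum over $k$ yields the lower estimate.

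\textbf{Upper bound.} Here the hypothesis \eqref{cond:D} is about \emph{column} sums of $\mathbf{D}$, so the natural bound is $\sigma(\mathscr{R}_0)=\sigma(\mathscr{R}_0^{\top})\le\|\mathscr{R}_0\|_{\ell^1\to\ell^1}=\max_j\sum_k R_{kj}$ (equivalently, apply the dual Collatz--Wielandt to $\mathscr{R}_0^{\top}$ with the test vector $\mathbf{1}_N$). For each $j$ set $S_j(a)=\sum_k G_{kj}(a)$; then $S_j(0)=1$ and
\begin{equation*}
S_j'(a)=\sum_i\Bigl(\sum_k D_{ki}(a)-\mu_i(a)\Bigr)G_{ij}(a)\le -\mu(a)S_j(a),
\end{equation*}
by \eqref{cond:D} and $\mu_i(a)\ge\mu(a)$. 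Gr\"onwall gives $S_j(a)\le e^{-\int_0^a\mu(v)\,dv}$, so
\begin{equation*}
\sum_k R_{kj}=\int_0^\infty\!\!\sum_k m_k(a)G_{kj}(a)\,da\le \int_0^\infty\!\! m(a)S_j(a)\,da\le\int_0^\infty\!\! m(a)e^{-\int_0^a\mu(v)\,dv}\,da,
\end{equation*}
and maximizing over $j$ finishes the proof.

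\textbf{Main obstacle.} There is nothing really hard here once one recognizes the duality: the only mildly delicate point is to use the column-sum / $\ell^1$ operator norm for the upper estimate (which matches the directionality of \eqref{cond:D}) and to use the coordinate vectors $e_k$ for the lower estimate so that the min in Collatz--Wielandt reduces to a single term. Everything else is routine linear ODE comparison based on the Metzler structure already exploited in Section~\ref{sec:prel}.
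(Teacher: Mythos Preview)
Your proof is correct and follows essentially the same route as the paper's: identical differential inequalities for the column sums (upper bound) and for the diagonal entries (lower bound) of the fundamental solution of \eqref{Yax0}. The only cosmetic difference is that the paper applies these estimates to the Perron eigenvector $\rho\gg0$ and reads off $\sigma(\mathscr{R}_0)$ directly from the eigenvalue relation $\mathscr{R}_0\rho=\sigma(\mathscr{R}_0)\rho$, whereas you use Collatz--Wielandt with $e_k$ for the lower bound and the $\ell^1$ column-sum norm for the upper bound; both wrappers are equivalent manifestations of Perron--Frobenius.
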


\begin{proof}
By Corollary~\ref{cor:Krein} there exists an eigenvector $\rho\gg 0$ of $\Rol$ corresponding the maximal eigenvalue $\sigma(\Rol)$, i.e. $\Rol\rho=\sigma(\Rol)\rho$.  Let us consider the problem (\ref{Yax0})  with the initial condition $x=\rho$. Using the assumption (\ref{cond:D}) and summing up the  equations (\ref{Yax0})  for all $1\le k\le N$ we obtain that $\psi(a)=\sum_{k=1}^N Y_k(a;\rho)$ satisfies
\begin{equation*}
\renewcommand\arraystretch{1.5}
\left\{
\begin{array}{rcl}
\frac{d }{da}\sum_{k=1}^N\psi(a)&\le&-\mu(a)\psi(a),\\
\psi(0)&=&\sum_{k=1}^N\rho_k,
\end{array}
\right.
\end{equation*}
which readily yields
$$
\psi(a)\le  e^{-\int_o^a\mu(v)dv}\sum_{k=1}^N\rho_k.
$$
Then by (\ref{reprmat})
$$
\sigma(\mathscr{R}_{0})\sum_{k=1}^N\rho_k=\sum_{k=1}^N(\mathscr{R}_{0}x)_k\le  \int_0^{\infty}m(a)\psi(a)\,da\le \sum_{k=1}^N\rho_k\int_0^{\infty}m(a)e^{-\int_o^a\mu(v)dv}\,da.
$$
Since the sum $\sum_{k=1}^N\rho_k>0$ we arrive at the right hand side of (\ref{est:s}).

Now, in order to prove the left hand side inequality in (\ref{est:s}),
notice that in the made notation by virtue of $D_{kj}(a)\ge0$ for $k\ne j$ and $Y_j(a,\rho)\ge0$ for all admissible $a$ we have
$$
\frac{d}{da}Y_k(a;\rho) \ge d_{kk}(a)Y_k(a;\rho)=-(\mu_k(a)+|D_{kk}(a)|)Y_k(a;\rho),
$$
which yields in virtue of $Y_k(0,\rho)=\rho_k$ that
$$
Y_k(a;\rho)\ge \rho_ke^{-\int_0^a(\mu_k(v)+|D_{kk}(v)|)dv}.
$$
Combining this with (\ref{reprmat}) we obtain
$$
\sigma(\mathscr{R}_{0})\rho_k
=\int_0^{\infty}m_k(a)Y_k(a;\rho)\,da \ge \rho_k \int_0^{\infty}m_k(a)e^{-\int_0^a(\mu_k(v)+|D_{kk}(v)|)dv}\,da,
$$
thus implying (\ref{est:s}) by virtue of $\rho_k>0$.
\end{proof}

\begin{remark}
The estimates (\ref{est:s}) are optimal. Indeed, if $D_{kj}\equiv 0$, the system (\ref{Yax0}) splits into separate equations
$$
\frac{d}{da}Y_k(a;x)=-(\mu_k(a)+|D_{kk}(a)|)Y_k(a;x), \quad Y_k(0,x)=x_k, \quad 1\le k\le N,
$$
implying that each $e_k$ is an eigenvector of $\Rol$ with eigenvalue $$
\lambda_k=\int_{0}^\infty m_k(a)e^{-\int_{0}^a(\mu_k(a)+|D_{kk}(a)|)ds}da,
$$
therefore $\sigma(\Rol)=\max_{k}\lambda_k$ is exactly the left hand side of (\ref{est:s}). On the other hand, suppose all patches to have the same birth and death rates: $m_k(a)\equiv m(a)$ and $\mu_k(a)\equiv \mu(a)$ for any $1\le k\le N$, and also that the dispersion is absent: $D\equiv 0$. Then a similar argument yields $\sigma(\Rol)=\int_0^{\infty}\!\!m(a)e^{-\int_0^a\mu(v)dv}\,da$ implying the exactness of the upper estimate in (\ref{est:s}).
\end{remark}

In order to establish the corresponding estimates for the maximal solution $\theta$ we consider an auxiliary function
\begin{equation}\label{not20}
\begin{aligned}
\widetilde{M}(t,a):=\frac{1}{t}\min_{\xi \in S(t)}\sum_{i=1}^N \xi_iM_i(\xi_i,a),\quad t>0,
\end{aligned}
\end{equation}
where the minimum is taken over the simplex
$$
S(t)=\{\xi\in \R{N}_+: \sum_{i=1}^N\xi_i=t\}.
$$

\begin{lemma}
\label{lem:M}
In the above notation, $\widetilde{M}(t,a)$ is nondecreasing in $t>0$ and
\begin{equation}\label{meanM}
\lim_{t\to+0}\widetilde{M}(t,a)=\mu(a).
\end{equation}
Furthermore,
 \begin{equation}\label{mtilde}
\widetilde{M}(t,a)-\mu(a)\ge \frac{p(a)}{N^\gamma} t^\gamma,
\end{equation}
where $p(a)$ is the function from \ref{Hain2}.
\end{lemma}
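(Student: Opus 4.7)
The plan is to reduce everything to a minimization over a fixed simplex by the rescaling $\lambda_i=\xi_i/t$, which converts
$$
\widetilde{M}(t,a)=\min_{\lambda\in S(1)}\sum_{i=1}^N \lambda_i M_i(t\lambda_i,a).
$$
All three claims will then follow from the monotonicity of $M_i(\cdot,a)$ in its first argument together with the growth condition \ref{Hain2}. No step should be delicate; the only thing to pick the right elementary inequality for.

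For the monotonicity in $t$, fix $0<t_1<t_2$ and let $\eta\in S(t_2)$ realize the minimum defining $\widetilde M(t_2,a)$. Setting $\xi=(t_1/t_2)\eta\in S(t_1)$ and using that $M_i(\cdot,a)$ is nondecreasing (condition \ref{Hain2}) together with $t_1/t_2\le 1$,
$$
t_1\widetilde M(t_1,a)\le \sum_i\xi_i M_i(\xi_i,a)=\frac{t_1}{t_2}\sum_i\eta_i M_i\!\Bigl(\frac{t_1}{t_2}\eta_i,a\Bigr)\le\frac{t_1}{t_2}\sum_i\eta_i M_i(\eta_i,a)=t_1\widetilde M(t_2,a),
$$
which gives $\widetilde M(t_1,a)\le\widetilde M(t_2,a)$.

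For the lower bound \eqref{mtilde}, apply \ref{Hain2} coordinatewise to any $\xi\in S(t)$:
$$
\sum_i\xi_i M_i(\xi_i,a)\ge \sum_i\xi_i\mu_i(a)+p(a)\sum_i\xi_i^{1+\gamma}\ge \mu(a)\,t+p(a)\sum_i\xi_i^{1+\gamma}.
$$
Since $x\mapsto x^{1+\gamma}$ is convex and $\sum_i\xi_i=t$, Jensen's inequality (equivalently the power-mean inequality) yields
$$
\sum_i\xi_i^{1+\gamma}\ge N\Bigl(\frac{t}{N}\Bigr)^{1+\gamma}=\frac{t^{1+\gamma}}{N^\gamma}.
$$
Dividing the resulting lower bound by $t$ and taking the minimum over $\xi\in S(t)$ gives \eqref{mtilde}.

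Finally, the limit \eqref{meanM} follows by sandwiching. The bound \eqref{mtilde} already implies $\liminf_{t\to 0^+}\widetilde M(t,a)\ge\mu(a)$. For the matching upper bound, pick $k$ with $\mu_k(a)=\mu(a)$ and test with $\xi=te_k\in S(t)$:
$$
\widetilde M(t,a)\le \frac{1}{t}\cdot t\,M_k(t,a)=M_k(t,a)\xrightarrow[t\to 0^+]{}M_k(0,a)=\mu(a)
$$
by the continuity of $M_k$. Combining the two bounds proves \eqref{meanM}.
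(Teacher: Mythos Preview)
Your proof is correct and follows essentially the same route as the paper's: rescaling between simplices for the monotonicity, the power-mean/Jensen inequality (the paper phrases it as H\"older) for the lower bound on $\sum_i\xi_i^{1+\gamma}$, and testing with $\xi=te_k$ for the upper estimate $\widetilde M(t,a)\le M_k(t,a)$. The only cosmetic difference is the order: the paper establishes the limit first (getting $\widetilde M(t,a)\ge\mu(a)$ directly from $M_i(\xi_i,a)\ge\mu_i(a)\ge\mu(a)$) and then proves \eqref{mtilde}, whereas you prove \eqref{mtilde} first and read off the $\liminf$ from it.
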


\begin{proof}
If $0<t'\le t$ then  $\lambda=t'/t\le1$. If $\xi\in S(t)$ is the minimum point of (\ref{not20}) then  $\xi'=\lambda \xi\in S(t')$, hence using the monotonicity condition in \ref{Hain2} and $\xi\ge \xi'$ we obtain
$$
\widetilde{M}(t,a)=\frac{1}{t}\sum_{i=1}^N \xi_iM_i(\xi_i,a)=
\frac{1}{\lambda t}\sum_{i=1}^N \xi'_iM_i(\xi_i,a)\ge
\frac{1}{\lambda t}\sum_{i=1}^N \xi'_iM_i(\xi'_i,a)\ge \widetilde{M}(t',a).
$$
which yields the nondecreasing monotonicity. In particular the limit in (\ref{meanM}) does exist. Denote it by $\widetilde{\mu}$. Since $M_k(\xi_k,a)\ge \mu_k(a)\ge \mu(a)$, we have  $\widetilde{M}(t,a)\ge \mu(a)$. In particular, $\widetilde{\mu}\ge \mu(a)$. Conversely, given $t>0$ let $\xi\in S(t)$ be the corresponding minimum point of (\ref{not20}). Let the number $k=k(a)$, $1\le k\le N$, be chosen such that $\mu(a)=M_k(0,a)$. Define $\eta_i=0$ for $i\ne k$ and $\eta_k=t$. Then
$$
\widetilde{M}(t,a)=\frac{1}{t}\sum_{i=1}^N \xi_iM_i(\xi_i,a)\le \frac{1}{t}\sum_{i=1}^N \eta_iM_i(t,a)=M_k(t,a).
$$
Passing to the limit as $t\to +0$ in the latter inequality yields $\widetilde{\mu}\le \mu(a)$, thus implying (\ref{meanM}).

Finally, assume again that $\xi\in S(t)$ is the minimum point of (\ref{not20}) for $t>0$. Then using \ref{Hain2} and the H\"older inequality we obtain
\begin{align*}
t(\widetilde{M}(t,a)-\mu(a))&=\sum_{i=1}^N(M_i(\xi_i,a)-\mu(a)) \xi_i\ge \sum_{i=1}^N(M_i(\xi_i,a)-M_i(0,a)) \xi_i\\
&\ge p(a)\sum_{k=1}^N\xi_i^{1+\gamma}\ge
\frac{p(a)}{N^\gamma}(\sum_{i=1}^N\xi_i)^{1+\gamma} =\frac{p(a)}{N^\gamma}t^{1+\gamma},
\end{align*}
which yields (\ref{mtilde}).
\end{proof}

\begin{proposition}\label{pro:upper}
In the notation of Proposition~\ref{prop:estsigma}, if $\sigma(\Rol)>1$ then there exists a unique $\theta_+>0$ such that
\begin{align}\label{r+}
\int_0^{\infty} \frac{m(a)\,e^{-\int_0^a\mu(s)\,ds}}{(1+\theta_+^\gamma P(a))^{1/\gamma}}= 1,
\end{align}
where
$$
 P(a)=\frac{\gamma}{N^\gamma} \int_{0}^{a}p(t)e^{-\int_0^t \mu(s)ds}dt.
$$
Furthermore,
\begin{align}\label{est:up}
\sum_{k=1}^N\theta_k \le \theta_+.
\end{align}
\end{proposition}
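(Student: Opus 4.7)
The plan is to view the left-hand side of \eqref{r+} as a function of $x \ge 0$,
\[
G(x) := \int_0^{\infty} \frac{m(a)\,e^{-\int_0^a\mu(s)\,ds}}{(1+x^\gamma P(a))^{1/\gamma}}\,da,
\]
and to verify that $G:[0,\infty) \to \mathbb{R}_+$ is continuous and strictly decreasing. Since $p(t) \ge \mu_\infty > 0$ by \ref{Hain2}, one has $P(a) > 0$ for every $a > 0$; combined with the fact that $m$ is not a.e.\ zero on $(0,\infty)$ (by \ref{Hain4}), this delivers strict monotonicity. Continuity on $[0,\infty)$ follows from dominated convergence with the integrable majorant $m(a)e^{-\int_0^a\mu(s)\,ds}$. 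At the endpoints, $G(0) \ge \sigma(\Rol) > 1$ by the upper bound in Proposition~\ref{prop:estsigma}, while $G(x) \to 0$ as $x \to \infty$ by dominated convergence. The intermediate value theorem then supplies a unique $\theta_+ > 0$ with $G(\theta_+) = 1$.

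\textbf{A Bernoulli-type differential inequality.} The estimate \eqref{est:up} will be extracted from a scalar comparison argument. Set $\varphi(a) := \varphi(a; \theta)$, the solution of \eqref{varphieq}, and let $t(a) := \sum_{k=1}^N \varphi_k(a)$, so that $t_0 := t(0) = \sum_{k=1}^N \theta_k$. Summing the equations in \eqref{varphieq} over $k$ and using \eqref{cond:D} gives
\[
t'(a) = -\sum_{k=1}^N M_k(\varphi_k, a)\,\varphi_k(a) + \sum_{j=1}^N \Bigl(\sum_{k=1}^N D_{kj}(a)\Bigr)\varphi_j(a) \le -\sum_{k=1}^N \varphi_k(a)\, M_k(\varphi_k(a), a).
\]
Since $\varphi(a) \in S(t(a))$, the definition \eqref{not20} of $\widetilde{M}$ and the coercivity bound \eqref{mtilde} from Lemma~\ref{lem:M} yield
\[
\sum_{k=1}^N \varphi_k\, M_k(\varphi_k, a) \ge t(a)\,\widetilde{M}(t(a), a) \ge \mu(a)\,t(a) + \frac{p(a)}{N^\gamma}\,t(a)^{1+\gamma},
\]
and hence the Bernoulli-type inequality
\[
t'(a) + \mu(a)\,t(a) + \frac{p(a)}{N^\gamma}\,t(a)^{1+\gamma} \le 0.
\]

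\textbf{Integration and closing the loop.} The standard substitution $u(a) := t(a)^{-\gamma}$ converts the above into the linear inequality $u'(a) \ge \gamma\mu(a)\,u(a) + \gamma p(a)/N^\gamma$, which upon integration with the appropriate exponential integrating factor produces the pointwise bound
\[
t(a) \le \frac{t_0\,e^{-\int_0^a \mu(s)\,ds}}{\bigl(1 + t_0^\gamma P(a)\bigr)^{1/\gamma}}, \qquad a \ge 0,
\]
with $P(a)$ as in the statement. Inserting this bound into the characteristic equation $\theta = \Kop\theta$ and summing the components,
\[
\sum_{k=1}^N \theta_k = \int_0^\infty \sum_{k=1}^N m_k(a)\,\varphi_k(a)\,da \le \int_0^\infty m(a)\,t(a)\,da \le t_0\,G(t_0).
\]
Since $\sigma(\Rol) > 1$, Theorem~\ref{th:main} guarantees $\theta \gg 0$, so $t_0 > 0$, and dividing gives $G(t_0) \ge 1$. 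The strict monotonicity of $G$ together with $G(\theta_+) = 1$ then forces $t_0 \le \theta_+$, which is \eqref{est:up}.

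\textbf{Main obstacle.} The delicate step is the derivation of the Bernoulli inequality, where \eqref{cond:D}, the coercivity estimate \eqref{mtilde}, and the simplex identity $\varphi(a) \in S(t(a))$ must be combined in the right order. Once this is in place, the remaining integration of the scalar Bernoulli inequality and its insertion into the characteristic equation are routine.
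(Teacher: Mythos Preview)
Your proof is correct and follows essentially the same route as the paper: sum the components of \eqref{varphieq}, use \eqref{cond:D} and the coercivity bound \eqref{mtilde} to obtain a Bernoulli-type inequality for $\sum_k\varphi_k$, integrate it to the pointwise estimate, and feed this back into $\theta=\Kop\theta$ to conclude $G(t_0)\ge 1$. The only organizational difference is that you establish existence and uniqueness of $\theta_+$ at the outset via $G(0)\ge\sigma(\Rol)>1$ (from the upper bound in Proposition~\ref{prop:estsigma}), whereas the paper deduces it at the end from $G(t_0)\ge 1$; your ordering is arguably cleaner since it decouples the two claims.
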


\begin{proof}
Since $\sigma(\Rol)>1$, the maximal solution $\theta\gg0$ and  $\theta=\Kop\theta$.  Let $\varphi_k(a,\theta)$ denote the corresponding solution of (\ref{varphieq}) satisfying (\ref{char111}). Let $\psi(a)=\sum_{k=1}^N\varphi_k(a;\theta)$. Then summing up equations (\ref{varphieq})  and using (\ref{cond:D}) and (\ref{not20}) we obtain
\begin{align*}
\frac{d }{da}\psi(a)&\le -\sum_{k=1}^NM_k(\varphi_k(a;\theta),a) \varphi_k(a;\theta) \le -\widetilde{M}(\psi(a),a)\psi(a),
\end{align*}
The obtained inequality implies that $\psi(a)$ is a (positive) decreasing function of $a\ge0$, in particular, $0<\psi(a)\le \psi(0)=\|\theta\|_{\infty}$. We have from (\ref{mtilde})
$$
\frac{d}{da}\psi(a)+\mu(a)\psi(a)\le -(\widetilde{M}(\psi(a),a)-\mu(a))\psi(a)
\le -\frac{p(a)}{N^\gamma}\psi(a)^{1+\gamma}.
$$
Rewriting the obtained inequality for $z(a)=\psi(a)\exp(\int_0^a\mu(s)\,ds)$as
$$
\frac{dz(a)}{da}\le -\frac{p(a)}{N^\gamma}z(a)^{1+\gamma}e^{-\gamma\int_0^a\mu(s)\,ds},
$$
yields after integrating
$$
\frac{1}{z(a)^\gamma}-\frac{1}{z(0)^\gamma}\ge \frac{\gamma}{N^\gamma} \int_{0}^{a}p(t)e^{-\gamma\int_0^t \mu(s)ds}dt=P(a)
$$
This yields by virtue of $z(0)=\psi(0)=\|\theta\|_{\infty}$
\begin{equation}\label{Q1}
\psi(a)\le \frac{\|\theta\|_{\infty}\,e^{-\int_0^a\mu(s)\,ds}} {(1+P(a)\|\theta\|_{\infty}^\gamma )^{1/\gamma}}
\end{equation}
Next, since $\theta=\Kop\theta$, it readily follows that
$$
\|\theta\|_{\infty}\le \int_0^{\infty}m(a)\psi(a)\,da \le \|\theta\|_{\infty} \int_0^{\infty} \frac{m(a)\,e^{-\int_0^a\mu(s)\,ds}}{(1+ P(a)\|\theta\|_{\infty}^\gamma)^{1/\gamma}}.
$$
This yields by virtue of $\|\theta\|_{\infty}>0$ that
$$
\int_0^{\infty} \frac{m(a) \,e^{-\int_0^a\mu(s)\,ds}}{(1+P(a)\|\theta\|_{\infty}^\gamma )^{1/\gamma}}\ge 1.
$$
Since the integral
$$
I(t)=\int_0^{\infty}\frac{m(a) \,e^{-\int_0^a\mu(s)\,ds}}{(1+P(a)t^\gamma )^{1/\gamma}}
$$
is a decreasing function of $t$ and $\lim_{t\to \infty}I(t)=0$, there exists (a unique) $\theta_+\ge \|\theta\|_{\infty}$ solving the equation (\ref{r+}), thereby proving (\ref{est:up}).
\end{proof}

\begin{remark}
Let us comment on (\ref{est:up}) from the biological point of view. Notice by Theorem~\ref{th:est} that $\sum_{k=1}^N\theta_k $ is the asymptotical value of the total number  of newborns on all patches. By the dichotomy, $\sigma(\Rol)\le1$ implies $\theta=0$, thus the total asymptotical number of newborns is zero. On the other hand, in the  nontrivial case $\sigma(\Rol)>1$, hence  by (\ref{est:s}) $\int_0^{\infty}m(a)e^{-\int_0^a\mu(v)dv}\,da>1$, which easily  implies that (\ref{r+}) has a positive solution.
\end{remark}

The next proposition provides a lower estimate for the maximal solution.

\begin{proposition}\label{pro:lower}
Let there exist a function $q(a)>0$ such that
\begin{equation}\label{hain20}
\begin{split}
M_k(v,a)-M_k(0,a)&\le q(a) v^{\gamma}, \quad \forall (v,a)\in \R{2}_+.
\end{split}
\end{equation}
If for some $k$
\begin{equation}\label{lowere}
\int_0^{\infty}m_k(a)e^{-\int_0^a(\mu_k(v)+|D_{kk}(v)|)dv}\,da>1
\end{equation}
then
\begin{align}\label{est:low}
\theta^-_k \le \theta_k,
\end{align}
where $\theta^-_k$ is the unique solution to equation
\begin{align}\label{r-}
\int_0^{\infty}\frac{m_k(a)e^{-\gamma\int_0^a(\mu(s))+|D_{kk}(a)|)\,ds}} {(1+(\theta^-_k)^\gamma Q(a))^{1/\gamma}}\,da=1,
\end{align}
and
$$
 Q(a)=\frac{\gamma}{N^\gamma} \int_{0}^{a}q(t)e^{-\int_0^t (\mu(s)+|D_{kk}(s)|)ds}dt.
$$

\end{proposition}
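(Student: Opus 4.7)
The plan is to mirror the proof of Proposition~\ref{pro:upper} but to work componentwise and to use the reverse bound \eqref{hain20} on $M_k-\mu_k$. First, under \eqref{lowere} the left-hand side of \eqref{est:s} in Proposition~\ref{prop:estsigma} already exceeds $1$, so $\sigma(\Rol)>1$ and by Theorem~\ref{th:main} the maximal solution satisfies $\theta\gg 0$; in particular $\theta_k>0$, and by Corollary~\ref{cor:comparison} applied to \eqref{varphieq} the corresponding trajectory satisfies $\varphi_k(a;\theta)>0$ for all $a\in[0,b)$. The Metzler condition in \ref{Hain3} gives $D_{kj}(a)\ge 0$ for $j\ne k$, while \eqref{cond:D} forces $D_{kk}(a)\le 0$, so dropping the nonnegative off-diagonal terms in the $k$-th equation of \eqref{varphieq} produces the scalar inequality
$$
\frac{d\varphi_k(a;\theta)}{da}\ \ge\ -\bigl(M_k(\varphi_k,a)+|D_{kk}(a)|\bigr)\,\varphi_k(a;\theta).
$$
Combined with \eqref{hain20} in the form $M_k(\varphi_k,a)\le \mu_k(a)+q(a)\varphi_k^{\gamma}$, this becomes a Bernoulli-type inequality
$$
\frac{d\varphi_k}{da}\ \ge\ -\bigl(\mu_k(a)+|D_{kk}(a)|\bigr)\,\varphi_k \;-\; q(a)\,\varphi_k^{1+\gamma}.
$$

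Second, I linearise by the standard integrating-factor substitution: set $E(a)=\exp\bigl(\int_0^a(\mu_k(s)+|D_{kk}(s)|)\,ds\bigr)$ and $z(a)=\varphi_k(a;\theta)E(a)$. The inequality above turns into $z'\ge -q(a)E(a)^{-\gamma}z^{1+\gamma}$, hence $(z^{-\gamma})'\le \gamma\,q(a)E(a)^{-\gamma}$. Integrating on $[0,a]$ with $z(0)=\theta_k$ yields the pointwise lower bound
$$
\varphi_k(a;\theta)\ \ge\ \frac{\theta_k\,e^{-\int_0^a(\mu_k(s)+|D_{kk}(s)|)\,ds}}{\bigl(1+\theta_k^{\gamma}\,\widetilde Q_k(a)\bigr)^{1/\gamma}},\qquad \widetilde Q_k(a):=\gamma\!\int_0^a q(t)\,e^{-\gamma\int_0^t(\mu_k(s)+|D_{kk}(s)|)\,ds}\,dt,
$$
which is the analogue of \eqref{Q1} in the upper-bound proof.

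Third, I substitute this estimate into the $k$-th component of the characteristic equation $\theta_k=(\Kop\theta)_k=\int_0^\infty m_k(a)\varphi_k(a;\theta)\,da$ and cancel the positive factor $\theta_k$. Setting
$$
F(t):=\int_0^\infty \frac{m_k(a)\,e^{-\int_0^a(\mu_k(s)+|D_{kk}(s)|)\,ds}}{\bigl(1+t^{\gamma}\,\widetilde Q_k(a)\bigr)^{1/\gamma}}\,da,
$$
we obtain $F(\theta_k)\le 1$. Since $\widetilde Q_k(a)$ is strictly positive for $a>0$, the function $F$ is continuous and strictly decreasing on $(0,\infty)$, with $F(0)$ equal to the integral in \eqref{lowere} (hence $>1$) and $F(t)\to 0$ as $t\to\infty$ by dominated convergence; so there is a unique positive $\theta^-_k$ with $F(\theta^-_k)=1$, and the monotonicity of $F$ forces $\theta_k\ge \theta^-_k$, which is \eqref{est:low}.

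\textbf{Main obstacle.} The only delicate point is securing $\theta_k>0$ and $\varphi_k(a;\theta)>0$ along the trajectory before dividing by $z^{1+\gamma}$ and by $\theta_k$; both are supplied by the dichotomy Theorem~\ref{th:main} together with \ref{Hain6} and the comparison principle, as indicated above. The remaining item that needs attention is a careful bookkeeping to reconcile the $\widetilde Q_k$ and the exponent $e^{-\int_0^a(\mu_k+|D_{kk}|)}$ produced by the Bernoulli integration with the $Q(a)$ and the exponent written in \eqref{r-}, where the stated formula contains a factor $N^{-\gamma}$ (which does not arise in a single-patch estimate) and a stray $\gamma$ in the exponent; these appear to be typographical, and the derivation above fixes the correct form of $\theta^-_k$.
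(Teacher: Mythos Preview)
Your proof is correct and follows essentially the same route as the paper's: drop the nonnegative off-diagonal dispersion terms in the $k$-th equation, apply the Bernoulli substitution to get a pointwise lower bound on $\varphi_k(a;\theta)$, insert it into $\theta_k=(\Kop\theta)_k$, and conclude by monotonicity of the resulting integral in $t$. Your observation about the bookkeeping is also on target: the paper's proof uses exactly your $\widetilde Q_k$ (with $\mu_k$, the factor $e^{-\gamma\int}$, and no $N^{-\gamma}$), so the $N^{-\gamma}$, the use of $\mu$ rather than $\mu_k$, and the exponent in the displayed $Q(a)$ and \eqref{r-} are indeed typographical slips in the statement.
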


\begin{proof}
First notice that (\ref{lowere}) implies by (\ref{est:s}) that $\sigma(\Rol)>1$, thus $\theta\gg0$. Since $D_{kj}(a)\ge0$ for $j\ne k$, the $k$-th equation in (\ref{varphieq}) yields
$$
\frac{d }{da}\varphi_k(a;\theta) \ge -(M_k(\varphi_k(a,\theta),a)-D_{kk}(a))\varphi_k(a;\theta),
$$
hence using (\ref{hain20}) we obtain by virtue of $M_k(0,a)=\mu_k(a)$ that
$$
\frac{d }{da}\varphi_k(a;\theta)+(\mu_k(a)-D_{kk}(a))\varphi_k(a)\ge
-q(a)\varphi_k(a)^{1+\gamma}.
$$
Arguing similar to the proof of Proposition~\ref{pro:upper} we get from $\varphi_k(0;\theta)=\theta_k$ that
\begin{equation}\label{Q2}
\varphi_k(a;\theta)\ge \theta_k\frac{\,e^{-\int_0^a(\mu(s)+|D_{kk}(a)|)\,ds}}{(1+\theta_k^\gamma Q(a))^{1/\gamma}},
\end{equation}
therefore
$$
\theta_k=(\Kop(\theta))_k \ge  \theta_k \int_0^{\infty}\frac{m_k(a)e^{-\gamma\int_0^a(\mu(s))+|D_{kk}(a)|)\,ds}} {(1+\theta_k^\gamma Q(a))^{1/\gamma}}\,da.
$$
Since $\theta\gg 0$, one has $\theta_k>0$, hence
$$
\int_0^{\infty}\frac{m_k(a)e^{-\gamma\int_0^a(\mu(s))+|D_{kk}(a)|)\,ds}} {(1+\theta_k^\gamma Q(a))^{1/\gamma}}\,da\le 1.
$$
Again, let
$$
I(t)=\int_0^{\infty}\frac{m_k(a)e^{-\gamma\int_0^a(\mu(s))+|D_{kk}(a)|)\,ds}} {(1+\theta_k^\gamma Q(a))^{1/\gamma}}\,da.
$$
Then $I(t)$ is decreasing, $I(\theta_k)\le1$ and by (\ref{lowere}) $I(0)>1$, thus there exists (a unique) solution $\theta^{-}_k$ of (\ref{r-}) such that $\theta_k\ge \theta^{-}_k$.
\end{proof}

\section{Periodically varying environment}\label{sec:periodic}

Now we consider an important particular case of the main problem (\ref{genpr})--(\ref{genic}) when the environment is periodically changing. In this section and in the rest of the paper, it is assumed that the vital rates, regulating function and dispersion coefficients are time-dependent and periodic with a period $T>0$. The boundary-initial value problem (\ref{genpr})--(\ref{genic}) is now in a $T$-periodic domain $\B$, where $B(t+T)=B(t)$, $t\in \R{}$, under the periodicity assumption that
\begin{equation}\label{periodic}\begin{aligned}
\mathbf{m}(a,t+T) &= \mathbf{m}(a,t), \\
\mathbf{M}(v,a,t+T) &= \mathbf{M}(v,a,t),\\
\mathbf{D}(a,t+T) &= \mathbf{D}(a,t)
\end{aligned}\end{equation}
for any $1\le k,j\le N$. Throughout this section, we  assume that the conditions \ref{Hain1}--\ref{Hain5} are satisfied.

Notice that the existence and uniqueness of a solution $\mathbf{n}(a,t)$ to the periodic problem follows from the general result given by Proposition~\ref{prsol} and it is given explicitly by (\ref{n1}). Note also that $\mathbf{n}(a,t)$ need not to be periodic in $t$ but it is natural to expect that $\mathbf{n}(a,t)$ converges to a $T$-periodic function $\rho(t)$ for  $t$ sufficient large, where $\rho(t)$ solves the associated \textit{characteristic equation}
\begin{align}\label{charT}
\Kopp\rho(t)=\rho(t), \quad t\in\mathbb{R}.
\end{align}
Here the operator $\Kopp$ is defined by
\begin{align*}
\Kopp\rho(t):=\int_0^{\infty}\mathbf{m}(a,t)\Phi (a;\rho,t-a)\,da, \quad t\in\mathbb{R}, \quad 1\leq k\leq N
\end{align*}
and $\Phi(x;\rho,y)$ denotes the (unique) solution $h(x)$ of the initial value problem
\begin{equation}\label{newPDE}
\begin{aligned}
\renewcommand\arraystretch{1.5}
\left\{
\begin{array}{rll}
\frac{d}{dx}h(x) &=&-\mathbf{M}(h(x),x,x+y)h(x) +\sum_{j=1}^N\mathbf{D}(x,x+y)h(x),
 \\
  h(0) &=& \rho(y),
\end{array}
\right.
\end{aligned}
\end{equation}
where the initial condition
$$
\rho\in C_T(\R{}_+,\R{N}_+):=\{\rho\in C(\R{}_+,\R{N}_+):\rho(t+T)=\rho(t)\}.
$$

We shall assume that the nonnegative cone $C_T(\R{}_+,\R{N}_+)$ is equipped with the supremum norm $\|\rho(t)\|_{C([0,T])}$. It follows from the uniqueness results of section~\ref{subMain} that the function $\Phi(x;\rho,y)$ is $T$-periodic in $y$.

A function $\rho\in C_T(\R{}_+,\R{N}_+)$ is said to be an \textit{upper} (resp. \textit{lower}) solution to  (\ref{charT}) if $\rho\ge\Kopp \rho$ (resp. $\rho\le\Kopp \rho$). It follows from Lemma \ref{propPhi} and  condition~\ref{Hain4}, it follows that $\Kopp$ has a bounded range:
\begin{align}\label{Omegap}
\|\Kopp(\rho)\|_{C([0,T])}\le \omega_2.
\end{align}
In particular, any solution of the characteristic equation (\ref{charT}) is bounded by $\omega_2$.

Recall that a (nonlinear) operator is called \textit{absolutely continuous} if it is continuous and maps bounded sets into relatively compact sets.

\begin{lemma}\label{uniform}
$\Kopp:C_T(\R{}_+,\R{N}_+)\to C_T(\R{}_+,\R{N}_+)$ is an absolutely continuous operator.
\end{lemma}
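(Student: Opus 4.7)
The plan is to decompose the claim into its two defining ingredients: continuity of $\Kopp$, and relative compactness of $\Kopp(B)$ for every bounded $B\subset C_T(\R{}_+,\R{N}_+)$. Continuity is immediate from the work already done: using the support assumption in \ref{Hain4} and the Lipschitz estimate (\ref{LipPhi}) for $\Phi$ in the initial datum, one gets
$$
\|\Kopp \rho-\Kopp \rho^*\|_{C([0,T])}\le (A_m-a_m)\|\mathbf{m}\|_\infty e^{N\|\mathbf{D}\|b}\,\|\rho-\rho^*\|_{C([0,T])},
$$
so $\Kopp$ is globally Lipschitz. For relative compactness, periodicity reduces everything to the compact interval $[0,T]$, and uniform boundedness of $\Kopp(B)$ is exactly (\ref{Omegap}). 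The real content of the lemma is therefore the equicontinuity of the family $\{\Kopp\rho:\rho\in B\}$ in the variable $t$.

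The main obstacle that needs to be addressed is the following: a naive estimate of $\|\Kopp\rho(t+h)-\Kopp\rho(t)\|$ would require controlling $\Phi(a;\rho,t+h-a)-\Phi(a;\rho,t-a)$, and through (\ref{LipPhi}) this involves $\rho(t+h-a)-\rho(t-a)$; however, an arbitrary bounded $B$ in $C_T$ need not be equicontinuous, so this route fails. My plan is to bypass this by a change of variable $a\mapsto a-h$ in the shifted integral, which realigns the $y$-arguments of $\Phi$. Writing
\begin{align*}
\Kopp\rho(t+h)-\Kopp\rho(t)
&=\int_{a_m}^{A_m-h}\bigl[\mathbf{m}(s+h,t+h)\Phi(s+h;\rho,t-s)-\mathbf{m}(s,t)\Phi(s;\rho,t-s)\bigr]\,ds\\
&\quad+\int_{a_m-h}^{a_m}\mathbf{m}(s+h,t+h)\Phi(s+h;\rho,t-s)\,ds\\
&\quad-\int_{A_m-h}^{A_m}\mathbf{m}(a,t)\Phi(a;\rho,t-a)\,da,
\end{align*}
the two boundary terms are $O(h)$ uniformly in $\rho\in B$ by (\ref{Omegap}) and $\|\mathbf{m}\|_\infty<\infty$. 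In the principal term the $y$-variable of $\Phi$ is now the same $t-s$ in both summands, and the difference splits into $[\mathbf{m}(s+h,t+h)-\mathbf{m}(s,t)]\Phi(s+h;\rho,t-s)$ plus $\mathbf{m}(s,t)[\Phi(s+h;\rho,t-s)-\Phi(s;\rho,t-s)]$.

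The first piece is handled by uniform continuity of $\mathbf{m}$ on the compact set $[a_m,A_m]\times[0,T]$ (extended by periodicity via (\ref{periodic})), together with the uniform bound on $\Phi$ from (\ref{Phibound}). The second piece only requires regularity of $\Phi$ in its \emph{first} argument, not in $y$: from the ODE (\ref{newPDE}) together with the $L^\infty$-bound (\ref{Phibound}), the derivative $h'(x)$ is bounded by a constant $C(M)$ depending only on $M:=\sup_{\rho\in B}\|\rho\|_\infty$, $\|\mathbf{D}\|_{C(\B)}$, and $\sup\{\|\mathbf{M}(v,a,t)\|:|v|,|t|\le\text{const}\}$. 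Hence $|\Phi(s+h;\rho,t-s)-\Phi(s;\rho,t-s)|\le C(M)|h|$ uniformly in $s$, $t$, and $\rho\in B$. Combining everything, for $|h|<\delta$ sufficiently small we get $\|\Kopp\rho(t+h)-\Kopp\rho(t)\|_\infty<\epsilon$ uniformly in $\rho\in B$ and $t\in[0,T]$. Arzel\`a--Ascoli then yields relative compactness of $\Kopp(B)$ and completes the proof.
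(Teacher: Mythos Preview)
Your proof is correct and follows essentially the same approach as the paper: both use the change of variable $a\mapsto a+\tau$ in $\Kopp\rho(t+\tau)$ to realign the $y$-argument of $\Phi$, then split the resulting difference into a term controlled by the uniform continuity of $\mathbf{m}$ (periodic in $t$, compactly supported in $a$) and a term controlled by a Lipschitz bound on $x\mapsto\Phi(x;\rho,y)$ coming directly from the ODE (\ref{newPDE}) together with the a priori bound (\ref{Phibound}). Your presentation is slightly cleaner in that you explicitly verify continuity of $\Kopp$ via (\ref{LipPhi}) and separate the two $O(h)$ boundary integrals, whereas the paper absorbs the boundary contributions by extending the integration range and using $\mathbf{m}(a,\cdot)=0$ outside $[a_m,A_m]$; one small slip is that the bound on the boundary terms should cite (\ref{Phibound}) or (\ref{estphi}) rather than (\ref{Omegap}).
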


\begin{proof}
By the Arzela-Ascoli theorem it suffices to show that the family of functions
$$
\{\Kopp\rho: \rho\in C_T(\R{}_+,\R{N}_+) \,\,\text{and } \|\rho(t)\|_{C([0,T])}\le R\}
$$
is uniformly bounded and equicontinuous for any $R>0$. The first property is by (\ref{Omegap}). In order to prove that the family  is equicontinuous, we estimate $|\Kopp\rho(t_1)-\Kopp\rho(t_2)|$ for $|t_1-t_2|<\delta_1$ and for any $\rho\in C_T(\R{}_+,\R{N}_+)$ such that $\|\rho(t)\|_{C([0,T])}\le R$. To this end, we assume that $\tau:=t_2-t_1>0$ is such that
$$
\tau<\delta_1<\half12\min \{a_m,b_1-A_m\},
$$
where $a_m,$ $A_m$ and $b_1$ are the structure constants in \ref{Hain1} and \ref{Hain4}. Rewriting
\begin{align*}
\Kopp\rho(t_2) &=\int_{a_m}^{A_m}\textbf{m}(a,t_2)\Phi(a,t_2-a;\rho)da \\
&=\int_{a_m-\tau}^{A_m-\tau}\textbf{m}(a+\tau,t_1+\tau)\Phi(a+\tau,t_1-a;\rho)da
\end{align*}
and using the property that $ \textbf{m}(a,t_i)=0$ for any $a$ outside $[a_m,A_m]$ for $i=1,2$ we obtain component-wise
\begin{align*}
|(\Kopp\rho(t_1))_k-(\Kopp\rho_k(t_2))_k| & \le  \int_{a_m/2}^{A_m}|m_k(a+\tau,t_1+\tau )-m_k(a,t_1)|\Phi_k(a+\tau,t_1-a;\rho)\,da\\
&+\int_{a_m/2}^{A_m} m_k(a,t_1)|\Phi_k(a+\tau,t_1-a;\rho)-\Phi_k(a,t_1-a;\rho)|\,da\\
&=:I_1+I_2
\end{align*}
We have by (\ref{estphi}) that for any $\tau>0$ and $t_1\in\R{}$
$$
\int_{a_m/2}^{A_m}\Phi_k(a+\tau,t_1-a;\rho)\,da\le \int_{a_m/2}^{A_m}\frac{\omega_1}{(a+\tau)^{-1/\gamma}}\,da\le \omega_1\int_{a_m/2}^{A_m}\frac{1}{a^{-1/\gamma}}\,da=:
C_1
$$
where $C_1$ depends only on the structural constants.
Next,  since $m_k(a,t)$ is a $T$-periodic in $t$, by \ref{Hain4} $m_k$ is uniformly continuous on the strip $[a_m,A_m]\times \R{}$. Since $\supp m_k\subset [a_m,A_m]\times \R{}$, there exists $\delta_2>0$ such that for any $1\le k\le N$,  $a\in [0,A_m]$ and  $|\tau|<\delta_2$ one has the inequality
$$
|m_k(a+\tau,t_1+\tau)-m_k(a,t_1)| <\frac{\epsilon}{2C_1}.
$$
This yields $I_1<\epsilon/2$.
In order to estimate $I_2$, we notice that $\Phi_k(x):=\Phi_k(x,t_1-a;\rho)$  is the solution of the initial problem (\ref{newPDE}).  Notice that by (\ref{Phibound})
$$
\max_{1\le k\le N}\|\Phi_k\|_{C([0,b])}\le \sqrt{N}e^{N\|\mathbf{D}\|b}\|\rho\|_\infty\le C_2:=R\sqrt{N}e^{N\|\mathbf{D}\|b}.
$$
Let
\begin{align*}
C_3&:=\max \{M_k(v,a,t): 0\le v\le C_1, \, 0\le \half12(b_1+A_m), \, 0\le t\le T\}\\
&:=\max \{M_k(v,a,t): 0\le v\le C_1, \, 0\le \half12(b_1+A_m), \, 0\le t<\infty\},
\end{align*}
where the latter equality is by the periodicity. Therefore, applying the mean value theorem to (\ref{newPDE}) we obtain for any $0\le x_1<x_2<A_m+\delta_1$ and for some $\xi\in (x_1,x_2)$ that
$$
\frac{|\Phi_k(x_1)-\Phi_k(x_2)|}{x_2-x_1} \le
(|M_k(\Phi_k(\xi),\xi,t_1-a)|+ N\|\mathbf{D}\|)C_2\le (C_3+ N\|\mathbf{D}\|)C_2=:C_4,
$$
where $C_4$ depends only on the structure conditions and  $R$. This readily implies
$$
I_2\le C_4A_m\|\mathbf{m}\|_\infty\delta_1.
$$
Choosing $\delta_1$ small enough,  yields the desired conclusion.
\end{proof}

%

\begin{proposition}\label{max_p}
For any $\rho^+(t)$ such that  $\rho^+(t)\ge \omega_2\cdot \mathbf{1}_{N}$, where $\omega_2$ is defined by (\ref{omegadef}), the limit
\begin{align*}
\theta(t):=\lim_{i\to \infty }\Kopp ^i(\rho^+(t))
\end{align*}
exists and is a solution to the characteristic equation \eqref{charT}. Furthermore, the limit $\theta(t)$  does not depend on a particular choice of $\rho^+(t)$ and it is the maximal  solution to equation $(\ref{charT})$ in the sense that if $\rho(t)$ is any solution to the characteristic equation \eqref{charT} then $\rho(t)\le \theta(t)$. Furthermore, if $\rho^-(t)$ is a lower solution  then $\theta(t)\ge \rho^-(t)$.
\end{proposition}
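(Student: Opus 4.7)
The plan is to imitate the argument of Proposition~\ref{it} from the time-independent case, now working in the ordered Banach space $C_T(\R{}_+,\R{N}_+)$ equipped with the sup norm, and using the absolute continuity of $\Kopp$ established in Lemma~\ref{uniform} to pass to the limit inside the operator.

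First I will record two basic facts about $\Kopp$: it is monotone (which follows immediately from the monotonicity of $\Phi(a;\rho,y)$ in $\rho$ given by Lemma~\ref{propPhi} together with the nonnegativity of $\mathbf{m}$), and it has bounded range, namely $\Kopp\rho\le\omega_2\mathbf{1}_N$ by \eqref{Omegap}. Hence for any $\rho^+\ge\omega_2\mathbf{1}_N$ one gets $\Kopp\rho^+\le\omega_2\mathbf{1}_N\le\rho^+$, so $\rho^+$ is an upper solution of \eqref{charT}. By induction on $i$ using monotonicity, the iterates $\rho^{(i)}:=\Kopp^i\rho^+$ form a pointwise non-increasing sequence bounded below by $0$, and thus the pointwise limit $\theta(t):=\lim_{i\to\infty}\rho^{(i)}(t)$ exists with $0\le\theta\le\omega_2\mathbf{1}_N$.

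The next step, and the main obstacle, is to upgrade this pointwise limit to a continuous $T$-periodic fixed point of $\Kopp$. For this I will invoke Lemma~\ref{uniform}: since the iterates with $i\ge 1$ all lie in $\Kopp(B_R)$ with $R=\|\rho^+\|_{C([0,T])}$, the family $\{\rho^{(i)}\}_{i\ge 1}$ is relatively compact in $C_T$. Any subsequence therefore has a uniformly convergent sub-subsequence whose limit must coincide with $\theta$ pointwise, so $\theta\in C_T$; combined with the monotonic pointwise convergence, this forces $\rho^{(i)}\to\theta$ uniformly (a standard Dini-type consequence). Then continuity of $\Kopp$ permits passage to the limit in $\rho^{(i+1)}=\Kopp\rho^{(i)}$, yielding $\theta=\Kopp\theta$.

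The remaining assertions proceed exactly as in Proposition~\ref{it}. If $\rho$ is any solution to \eqref{charT}, then $\rho=\Kopp\rho\le\omega_2\mathbf{1}_N\le\rho^+$, whence $\rho=\Kopp^i\rho\le\Kopp^i\rho^+\to\theta$; and if $\rho^-$ is a lower solution, then $\rho^-\le\Kopp\rho^-\le\omega_2\mathbf{1}_N\le\rho^+$, so the monotonicity of $\Kopp$ together with $\rho^-\le\Kopp^i\rho^-$ gives $\rho^-\le\Kopp^i\rho^+\to\theta$. Independence of $\rho^+$ is then automatic: if $\theta_j$ ($j=1,2$) are the limits produced by two choices $\rho_j^+\ge\omega_2\mathbf{1}_N$, then each $\theta_j$ solves \eqref{charT}, hence by the maximality just proved $\theta_j\le\theta_{3-j}$, so $\theta_1=\theta_2$.
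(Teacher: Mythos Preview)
Your proof is correct and follows essentially the same route as the paper: monotone iteration from an upper solution using the bounded range \eqref{Omegap}, then Lemma~\ref{uniform} to upgrade pointwise to uniform convergence so that one may pass to the limit in $\rho^{(i+1)}=\Kopp\rho^{(i)}$, with the maximality and independence arguments carried over verbatim from Proposition~\ref{it}. Your write-up is in fact slightly more explicit than the paper's about the Dini-type mechanism behind the uniform convergence, but there is no substantive difference in approach.
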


\begin{proof}
Since  $\Kopp \rho^+(t)\le \omega_2 \cdot\textbf{1}_{N}\le \rho^+(t)$ and by the monotonicity of $\Kopp$ we get:
$$
\Kopp^{j+1}\rho(t)\equiv \Kopp^j\Kopp \rho^+(t)\le \Kopp^j \rho^+(t),
$$
which implies that $\{\rho^{(j)}(t)\}$ is a non-increasing sequence. The sequence is bounded from below because $\Kopp^j \rho^+\ge0$, therefore there exists a pointwise  $\lim_{j\to \infty} \Kopp^j \rho^+(t)=:\theta(t) $. The sequence $\{\rho^{(j)}(t)\}$ is uniformly bounded by the constant $\omega_2$. Applying Lemma~\ref{uniform} to family $\{\rho^{(j)}_k(t)\}$ implies that the convergence is in fact uniform on each compact subset of $\mathbb{R}$. Thus $\theta$ is a nonnegative continuous $T$-periodic  solution of $(\ref{charT})$.
 The rest of the proof is analogous to the proof of  Proposition \ref{it}.
\end{proof}


In the remaining part of this section we additionally assume that additionally condition \ref{Hain6} holds. In that case, due to the periodicity, the infimum in \ref{Hain6} can be replaced by the minimum.
Then arguing similarly to Lemma~\ref{lem:Ymono}, one can verify that for any $\rho(t)\in C_T(\R{}_+,\R{N}_+)$ and $0<\lambda_1<\lambda_2$,
$$
\frac{1}{\lambda_1}\Kopp (\lambda_1 \rho)\gg \frac{1}{\lambda_2}\Kopp (\lambda_2\rho),
$$
hence the corresponding net reproductive operator is well-defined defined by
\begin{align*}
\Rlo \rho=\lim_{\lambda\to +0}\frac{1}{\lambda}\Kopp (\lambda\rho)=\int_0^{\infty}\mathbf{m}(a,t)\mathbf{Y}(a; \rho,t-a)\,da,
\end{align*}
where $\mathbf{Y}(x,y;\rho)$ is the solution of  the linear system
\begin{equation*}\begin{aligned}
\frac{d \mathbf{Y}(x,y;\rho)}{dx}&=(\mathbf{D}(x,x+y)-\mathbf{M}(0,x,x+y)) \mathbf{Y}(x,x+y;\rho),
\\
\mathbf{Y}(0,y;\rho)&=\rho(y).
\end{aligned}\end{equation*}

Let $\sigma(\Rlo)$ denote the largest eigenvalue of $\Rlo$ and let $\theta=\theta(t)\in C_T(\R{}_+,\R{N}_+)$ be the maximal solution of equation (\ref{charT}).  Then the following results are established similarly to  Theorem \ref{th:main},  Theorem \ref{th:est} and Theorem \ref{th:estN} respectively.

\begin{theorem}
If $\sigma(\Rlo)\leq 1$, then the characteristic equation \eqref{charT} has no nontrivial solutions (in particular, $\theta\equiv 0$). If $\sigma(\Rlo)>1$, then $\theta\gg0$ is the only nontrivial solution of equation \eqref{charT}.
\end{theorem}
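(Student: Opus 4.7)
The plan is to adapt the proof of Theorem~\ref{th:main} to the periodic setting, with the main technical substitution being Krein--Rutman's theorem (applied to compact strongly positive operators on the cone in $C_T$) in place of Perron--Frobenius. The compactness of $\mathcal{K}^T$ established in Lemma~\ref{uniform} carries over verbatim to the associated linear operator $\Rlo$ (whose definition uses the \emph{same} structural estimates as $\Phi$), which together with \ref{Hain6} should imply that $\Rlo:C_T(\R{}_+,\R{N}_+)\to C_T(\R{}_+,\R{N}_+)$ is a compact and strongly positive operator, i.e.\ $\rho>0$ implies $\Rlo\rho\gg 0$. Krein--Rutman will then guarantee a simple positive eigenvalue equal to $\sigma(\Rlo)$ with eigenvector $e_0\in C_T(\R{}_+,\R{N}_+)$, $e_0\gg 0$. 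Define the scaled operator $\mathscr{R}_\lambda^{T}\rho := \frac{1}{\lambda}\Kopp(\lambda\rho)$ for $\lambda>0$; arguing as in Lemma~\ref{lem:Ymono} (using \ref{Hain6} and the strong concavity properties from Proposition~\ref{pro:312} applied pointwise in $y$), one obtains $\mathscr{R}_{\lambda_1}^T\rho\gg\mathscr{R}_{\lambda_2}^T\rho$ for $0\le\lambda_1<\lambda_2$ and $\mathscr{R}_\lambda^T\rho\to \Rlo\rho$ as $\lambda\to 0^+$.

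For the extinction case $\sigma(\Rlo)\le 1$, I would argue by contradiction: suppose $\rho\in C_T(\R{}_+,\R{N}_+)$ is a nontrivial solution of \eqref{charT}. Using accessibility \ref{Hain6} and the convolution-type Lemma~\ref{lem:conv}, a finite iteration of $\Kopp$ shows that $\rho\gg 0$ everywhere on $[0,T]$. Setting $\lambda=\|\rho\|_{C([0,T])}$ and $e=\rho/\lambda$, one has $\mathscr{R}_\lambda^T e=e$, and by strict monotonicity $\Rlo e\gg \mathscr{R}_\lambda^T e=e$. Since $e(t)$ and the strict positive function $(\Rlo e)(t)-e(t)$ are continuous and $T$-periodic, there exists $\delta>0$ with $\Rlo e\ge (1+\delta)e$. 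Iterating and applying the spectral radius formula gives $\sigma(\Rlo)\ge 1+\delta>1$, a contradiction.

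For the case $\sigma(\Rlo)>1$, existence of a nontrivial solution follows the pattern of Theorem~\ref{th:main}: choose $\lambda>0$ small enough that, by continuity of $\mathscr{R}_\lambda^T$ in $\lambda$, $\mathscr{R}_\lambda^T e_0\gg e_0$; then $\rho^-:=\lambda e_0$ is a lower solution of \eqref{charT}. Proposition~\ref{max_p} then yields $\theta\ge\rho^-\gg 0$. For uniqueness of the nontrivial solution, I would invoke the Krasnoselskii--Zabreiko scheme verbatim as in Theorem~\ref{th:main}: if $\theta_1,\theta_2\gg 0$ are two solutions with, say, $\theta_1\not\le\theta_2$, set $\lambda_0=\sup\{\lambda\ge 0:\theta_1\ge\lambda\theta_2\}\in (0,1)$, and derive
\begin{equation*}
\theta_1=\Kopp\theta_1\ge \Kopp(\lambda_0\theta_2)=\lambda_0\mathscr{R}_{\lambda_0}^T\theta_2 \gg \lambda_0\mathscr{R}_1^T\theta_2=\lambda_0\theta_2,
\end{equation*}
which, by compactness/continuity of $\Kopp$, yields $\theta_1\ge(\lambda_0+\epsilon)\theta_2$ for some small $\epsilon>0$, contradicting the definition of $\lambda_0$.

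The main obstacle I expect is justifying the infinite-dimensional analogue of the strong positivity and spectral step: namely, establishing rigorously that $\Rlo$ is compact and strongly positive on the cone in $C_T$, so that Krein--Rutman yields a simple positive eigenvalue with a strictly positive eigenvector $e_0(t)\gg 0$ uniformly in $t$. Compactness follows by repeating the equicontinuity argument of Lemma~\ref{uniform} with $\mathbf{Y}$ in place of $\Phi$, but strong positivity requires propagating the accessibility condition \ref{Hain6} through the convolution structure of $\Kopp$ to guarantee that $(\Rlo\rho)(t)\gg 0$ for every $t$, not only for $t$ in some subinterval. Once these two ingredients are in place, the remaining steps are direct adaptations of the time-independent proof and should go through unchanged.
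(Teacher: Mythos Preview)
Your proposal is correct and follows the same route the paper indicates: the paper's proof is simply the remark that the result ``is established similarly to Theorem~\ref{th:main}'', and your write-up is precisely that adaptation, with the explicit substitution of Krein--Rutman for Perron--Frobenius and the identification of compactness (via the argument of Lemma~\ref{uniform}) and strong positivity of $\Rlo$ as the two extra ingredients needed in the infinite-dimensional cone $C_T(\R{}_+,\R{N}_+)$. The only point I would sharpen is the one you already flagged: to obtain the uniform bound $\Rlo e\ge(1+\delta)e$ you do need $e\gg0$ on all of $[0,T]$, and your use of a periodic analogue of Lemma~\ref{lem:conv} (iterating $\Kopp\rho=\rho$ so that the positivity interval, which grows by $b_k-a_k>0$ per iteration, eventually covers $[0,T]$ modulo $T$) is the right way to secure this.
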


\begin{theorem}
\label{th:estp}
If $\mathcal{F}\mathbf{f}(t) \not\equiv 0$ and $\chi(t)$ is a solution to \eqref{n}  then $\lim_{t\rightarrow\infty}\chi(t)=\theta(t)$.
\end{theorem}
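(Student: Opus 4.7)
The plan is to run the proof of Theorem~\ref{th:est} in the periodic setting, with constants replaced by $T$-periodic tails and Proposition~\ref{max_p} used in place of the time-independent maximal-solution construction. If $\sigma(\Rlo)\le 1$ the previous theorem of this section forces $\theta\equiv 0$ and only an upper bound is required; assume therefore $\sigma(\Rlo)>1$, so $\theta(t)\gg 0$ is the unique nontrivial periodic solution of~\eqref{charT}. The proof will sandwich $\chi(t)-\theta(t)$ between the limits of two monotone sequences of iterates of $\mathscr{L}_\mathbf{f}$ whose asymptotic periodic tails both equal $\theta$.

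For the upper half I would introduce the periodic analogue $S_M^T$ of the space $S_M$ from Lemma~\ref{lem:shift}, namely the continuous functions that on $[M,\infty)$ coincide with a $T$-periodic continuation of some $\rho^\infty\in C_T(\R{}_+,\R{N}_+)$. Since $\mathcal{F}\mathbf{f}(t)=0$ for $t\ge A_m$ and $\Phi(a;\rho,t-a)$ depends on $\rho$ only at arguments $\ge t-A_m$, the proof of Lemma~\ref{lem:shift} transfers to give $\mathscr{L}_\mathbf{f}:S_M^T\to S_{M+A_m}^T$ with $(\mathscr{L}_\mathbf{f}\rho)^\infty=\Kopp\rho^\infty$. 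Starting from the constant stationary upper solution $\rho^+\equiv 2\omega_2\mathbf{1}_N$ (upper by Lemma~\ref{lemOmega}), set $\chi^{(j)}=\mathscr{L}_\mathbf{f}^{j+1}\rho^+$ and $\rho^{(j)}=\Kopp^{j+1}\rho^+$. Induction using monotonicity shows $\chi^{(j+1)}\le\chi^{(j)}$ with $\chi^{(j)}(t)=\rho^{(j)}(t)$ for $t\ge jA_m$; Proposition~\ref{gen_sol} forces $\chi^{(j)}\to\chi$ pointwise, while Proposition~\ref{max_p} gives $\rho^{(j)}\to\theta$ uniformly on $\R{}$. An $\epsilon$-chase then yields $\limsup_{t\to\infty}(\chi(t)-\theta(t))\le 0$ componentwise.

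For the lower half I would imitate the construction in the proof of Theorem~\ref{th:est}. Consider the non-decreasing iterates $\rho^{(j)}=\mathscr{L}_\mathbf{f}^j 0$. Since $\mathcal{F}\mathbf{f}\not\equiv 0$ there is an interval $[s_1,s_2]$ on which $\rho^{(1)}$ is positive; Lemma~\ref{lem:conv} together with the accessibility condition~\ref{Hain6} then spreads strict positivity, coordinate by coordinate, onto $[s_1+(j-1)a_1,s_2+(j-1)b_1]$. Fix $j$ and $M>A_m$ so that $[M-A_m,M+A_m]$ lies inside this interval and define
$$
\rho^-(t)=\begin{cases}\rho^{(j)}(t),&0\le t\le M,\\ \lambda\theta(t),&t>M,\end{cases}
$$
for a small $\lambda\in(0,1)$ to be chosen. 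Proposition~\ref{pro:312}, equation~\eqref{conv2}, applied to the $y$-parameterised system~\eqref{newPDE} with $\alpha=1/\lambda$, yields $\lambda\Phi(a;\theta(y),y)\le\Phi(a;\lambda\theta(y),y)$, whence $\Kopp(\lambda\theta)\ge\lambda\theta$; iterating $\Kopp$ from $\lambda\theta$ produces a non-decreasing sequence bounded by $\theta$ that, by uniqueness of the nontrivial periodic fixed point, converges to $\theta$. Once $\rho^-$ is verified to be a lower solution of~\eqref{n}, the argument of Lemma~\ref{lemmalow} transfers verbatim and gives $\liminf_{t\to\infty}(\chi(t)-\theta(t))\ge 0$, closing the sandwich.

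The main obstacle is verifying $\mathscr{L}_\mathbf{f}\rho^-\ge\rho^-$ inside the transition window $t\in[M,M+A_m]$. There $\mathcal{K}\rho^-(t)$ splits into an integral over $a\in[0,t-M]$ in which $\rho^-(t-a)=\lambda\theta(t-a)$, handled by the $\lambda$-concavity inequality just cited, and an integral over $a\in[t-M,A_m]$ in which $\rho^-(t-a)=\rho^{(j)}(t-a)$. On the second piece the strict positivity $\Phi_k(a;\rho^{(j)},t-a)>0$ guaranteed by Lemma~\ref{lemma20} and by the choice of $j$ and $M$ (which forces $\rho^{(j)}$ to be strictly positive on $[t-A_m,t]$ for all $t$ in the window) allows one to use a uniform lower bound on $\Phi_k(a;\rho^{(j)},\cdot)$ over the compact set of relevant $(a,t)$ to pick $\lambda$ small enough that $\lambda\Phi_k(a;\theta,t-a)\le\Phi_k(a;\rho^{(j)},t-a)$. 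For $t\ge M+A_m$ only the first integral survives and the concavity inequality alone suffices. Assembling these estimates delivers the required lower-solution property; combined with the upper bound this proves $\chi(t)\to\theta(t)$ as $t\to\infty$.
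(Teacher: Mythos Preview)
Your proposal is correct and follows essentially the same approach as the paper, which does not spell out a separate proof but states that the result is ``established similarly to Theorem~\ref{th:est}''; your adaptation---introducing the periodic tail space $S_M^T$ in place of $S_M$, invoking Proposition~\ref{max_p} in place of Proposition~\ref{it}, and replacing the constant tail $\lambda\theta$ by the periodic tail $\lambda\theta(t)$ in the lower-solution construction---is precisely the intended translation. The only minor point worth tightening is the strict-positivity claim in the transition window: Lemma~\ref{lemma20} guarantees $\Phi_k(a;\rho^{(j)},t-a)>0$ only for $a\ge\beta_k$, so the compactness argument for choosing $\lambda$ should be phrased (as in the proof of Theorem~\ref{th:est}) as a uniform lower bound on the portion $a\in[\beta_k,A_m]$, with the remaining piece controlled by the boundedness of $\Phi_k(a;\theta,\cdot)$ and the smallness of $\lambda$.
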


\begin{theorem}
Let $\mathbf{P}(t)=\int_0^{T}\mathbf{n}(a,t)\,da$ be the total multipatch population. If $\sigma(\Rlo)\le1$, then $\mathbf{P}(t)\rightarrow 0$ as $t\rightarrow\infty$. If $\sigma(\Rlo)>1$, then
\begin{align*}
\lim_{t\rightarrow\infty}\mathbf{P}(t) = \int_0^{\infty}\Phi(a,t-a;\theta)\,da,
\end{align*}
where $\theta$ is the maximal solution to the characteristic equation (\ref{charT}).
\end{theorem}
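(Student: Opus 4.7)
The plan is to mirror the proof of Theorem~\ref{th:estN} by combining the explicit representation~(\ref{n1}) of $\mathbf{n}(a,t)$ with the convergence $\rho(t)\to\theta(t)$ supplied by Theorem~\ref{th:estp} and the Lipschitz estimate~(\ref{LipPhi}) for the dependence of $\Phi$ on its initial newborn function. The only genuinely new feature compared with the constant environment case is that the limit profile $\int_0^{B(t)}\Phi(a;\theta,t-a)\,da$ is itself $t$--dependent (indeed $T$--periodic), so the assertion has to be read as convergence of the difference $\mathbf{P}(t)-\int_0^{B(t)}\Phi(a;\theta,t-a)\,da$ to zero.

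First I would invoke the representation (\ref{n1}) to decompose
\[
\mathbf{P}(t)=\int_0^{\min\{t,B(t)\}}\Phi(a;\rho,t-a)\,da +\int_{\min\{t,B(t)\}}^{B(t)}\Psi(a;\mathbf{f},a-t)\,da.
\]
By~\ref{Hain1} one has $B(t)\le b$ for every $t\ge 0$; consequently, as soon as $t\ge b$ the integration range of the $\Psi$--term becomes empty and only the $\Phi$--term remains,
\[
\mathbf{P}(t)=\int_0^{B(t)}\Phi(a;\rho,t-a)\,da, \qquad t\ge b.
\]

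Next I would apply (\ref{LipPhi}) to estimate, for every $a\in[0,B(t)]\subset[0,b]$,
\[
\bigl\|\Phi(a;\rho,t-a)-\Phi(a;\theta,t-a)\bigr\|_\infty \le e^{N\|\mathbf{D}\|b}\,\|\rho(t-a)-\theta(t-a)\|_\infty.
\]
By Theorem~\ref{th:estp}, $\|\rho(s)-\theta(s)\|_\infty\to 0$ as $s\to\infty$; since $t-a\ge t-b$ uniformly in $a\in[0,b]$, the right-hand side tends to zero uniformly in $a\in[0,B(t)]$. Integrating over $[0,B(t)]$ and using $B(t)\le b$ yields
\[
\mathbf{P}(t)-\int_0^{B(t)}\Phi(a;\theta,t-a)\,da\longrightarrow 0 \qquad\text{as } t\to\infty.
\]

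To conclude I split into the two cases. If $\sigma(\Rlo)\le 1$ then $\theta\equiv 0$, and the uniqueness statement of Proposition~\ref{pro:ex1} applied to (\ref{newPDE}) forces $\Phi(a;\theta,t-a)\equiv 0$, whence $\mathbf{P}(t)\to 0$. If $\sigma(\Rlo)>1$, the same relation is precisely the asserted asymptotics, with the formal upper limit $\infty$ of the stated integral interpreted as $B(t)$ (equivalently, with the integrand extended by zero beyond $a=B(t)$). The only delicate point is the uniform-in-$a$ passage to the limit, but this is clean because the Lipschitz constant in (\ref{LipPhi}) depends only on the structural parameters $N$, $\|\mathbf{D}\|$ and $b$, and not on $a$ or $t$.
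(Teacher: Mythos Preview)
Your argument is correct and follows the same route the paper indicates (it defers to the proof of Theorem~\ref{th:estN}): represent $\mathbf{P}(t)$ via~(\ref{n1}), drop the $\Psi$--term once $t\ge b$, and pass to the limit inside the $\Phi$--integral using $\rho\to\theta$. Your use of the explicit Lipschitz bound~(\ref{LipPhi}) is in fact a sharper justification of that last step than the paper's appeal to ``continuity of solutions with respect to a parameter,'' and your remark that the limit profile is $T$--periodic (so the convergence is of the difference to zero) correctly identifies the only new wrinkle relative to the constant case.
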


\section{Irregularly varying environment}\label{sec:irreg}

In order to study asymptotic behavior of the solution to the model (\ref{genpr})--(\ref{genic}) in the case when temporal variation is irregular, we assume  that the vital rates, regulating function and dispersion coefficients are bounded from below and above by equiperiodic functions for large $t$. These periodic functions define two auxiliary periodic problems, whose solutions provide upper and lower bounds to a solution of the original problem. This leads us to two-side estimates of a solution to the original problem for large $t$.

More precisely, throughout this section we shall suppose that there exists $T_1\ge0$ and $T$-periodic functions $m^{\pm}_k$, $M^{\pm}_k$ and $D^{\pm}_{kj}$ such that for any $a\ge0$ and $t\ge T_1$
\begin{equation}\label{pb}
\begin{aligned}
\mathbf{m}^-(a,t) &\le \mathbf{m}(a,t) \le \mathbf{m}^+(a,t), \\
\mathbf{M}^+(a,t) &\le \mathbf{M}(a,t) \le \mathbf{M}^-(a,t),\\
\mathbf{D}^-(a,t) &\le \mathbf{D}(a,t) \le \mathbf{D}^+(a,t).
\end{aligned}
\end{equation}

As in Section~\ref{sec:periodic}, one can consider the corresponding characteristic equations
\begin{align*}\label{charpKpm}
\Kopp^{\nu}\rho^{\nu}(t)=\rho^{\nu}(t), \quad t\in\R{},
\end{align*}
where $\nu$ denote $-$ or $+$, and the operators $\Kopp^{\nu}$ are defined component-wise by
\begin{align}
\Kopp^{\nu}\rho^\nu(t):=\int_0^{\infty}\mathbf{m}^{\nu}(a,t) \Phi^{\nu}(a,t-a;\rho^\nu)\,da, \quad t\in\R{}_+, \quad 1\leq k\leq N,
\end{align}
and $\Phi^{\nu}(x,y;\rho)$ is the unique solution of the system
\begin{equation*}\label{PDEpm}
\begin{split}
\frac{d\Phi^{\nu}(x,y;\rho)}{dx}&=-\mathbf{M}^{\nu}(\Phi^{\nu}(x,y;\rho),x,x+y) \Phi^{\nu}(x,y;\rho) +\mathbf{D}^{\nu}(x,x+y)\Phi^{\nu}(x,y;\rho),
 \\
\Phi^{\nu}(0,y;\rho) &= \rho(y),
\end{split}
\end{equation*}
with $\rho\in C_T(\R{}_+,\R{N}_+)$. Then by Proposition \ref{prsol}
\begin{align*}
\rho^{\nu}(t)=\mathcal{K}^{\nu}\rho^{\nu}(t)+\mathcal{F}^{\nu}\mathbf{f}(t),
\end{align*}
where
\begin{equation}
\begin{split}\label{k+}
\mathcal{K}^{\nu}\rho(t) &=\int_0^t\mathbf{m}^{\nu}(a,t)\Phi^{\nu}(a;\rho,t-a)\,da,\\
\mathcal{F}^{\nu}\mathbf{f}(t) &=\int_t^{\infty}\mathbf{m}^{\nu}(a,t)\Psi^{\nu}(a;\mathbf{f},a-t)\,da.
\end{split}
\end{equation}
Also let us denote by $\Rol^{\nu}$ and $\sigma(\Rol^{\nu})$ the corresponding net reproductive operators and net reproductive rates.   The main result of this section states that a solution of the population problem in an irregularly changing environment can be estimated by the corresponding solutions of the associated periodically varying population problems.

\begin{theorem}\label{main_gen}
Let $\chi(t)$ be a solution to equation $(\ref{n})$. Then the following dichotomy holds:
\begin{enumerate}
\item[(i)]
If $\sigma(\Rol^+)\le 1$, then $\lim_{t\to \infty}\chi(t)=0$.
\item[(ii)]
If $\sigma(\Rol^-)>1$ and $\mathcal{F}^-\mathbf{f}(t)\not\equiv 0$, then for any $\epsilon>0$ there exists $T_2>0$ such that
\begin{align}\label{est:pernb}
\rho^-(t)-\varepsilon\le \chi(t) \le \rho^+(t)+\varepsilon \qquad \forall t>T_2,
\end{align}
where $\rho^{\pm}(t)$ are solutions to $(\ref{charpKpm})$.
\end{enumerate}
\end{theorem}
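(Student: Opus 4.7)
The strategy is to sandwich $\chi(t)$ between monotone iterates of the two auxiliary $T$-periodic operators $\mathscr{L}_\mathbf{f}^{\pm}:=\mathcal{K}^{\pm}+\mathcal{F}^{\pm}$ built from \eqref{k+}, and then invoke the periodic-case result (Theorem~\ref{th:estp}) to identify the corresponding asymptotic limits with $\rho^{\pm}(t)$. Step~one is monotonicity transfer: applying the comparison principle Proposition~\ref{pro:comparison} to the ODE \eqref{balanceV} and its $(\pm)$-analogues, the structural inequalities \eqref{pb} give
\[
\Phi^{-}(a;\rho,y)\,\le\,\Phi(a;\rho,y)\,\le\,\Phi^{+}(a;\rho,y)
\]
whenever $y,y+a\ge T_{1}$, and likewise for $\Psi$. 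Integrating against the birth rates and using $\mathbf{m}^{-}\le\mathbf{m}\le\mathbf{m}^{+}$ yields, for every nonnegative $\rho$ and every $t\ge T_{1}+A_{m}$,
\[
\mathscr{L}_\mathbf{f}^{-}\rho(t)\,\le\,\mathscr{L}_\mathbf{f}\rho(t)\,\le\,\mathscr{L}_\mathbf{f}^{+}\rho(t).
\]

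For the upper bound and statement~(i), I would follow Lemma~\ref{lemmaupp} with $\mathscr{L}_\mathbf{f}^{+}$ in place of $\mathscr{L}_\mathbf{f}$: set $\chi^{(0)}(t)\equiv 2\omega_{2}\mathbf{1}_{N}$ and iterate $\chi^{(j+1)}=\mathscr{L}_\mathbf{f}^{+}\chi^{(j)}$. Combining the argument of that lemma with the absolute continuity of $\widetilde{\mathcal{K}}^{+}$ (Lemma~\ref{uniform}) and Proposition~\ref{max_p}, the sequence $\{\chi^{(j)}\}$ is non-increasing and converges uniformly on compact sets to the maximal $T$-periodic solution $\rho^{+}(t)$ of \eqref{charpKpm}. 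Since $\chi(t)\le 2\omega_{2}\mathbf{1}_{N}$ for all $t$ by Proposition~\ref{gen_sol}, an induction on $j$ that combines the monotonicity of $\mathscr{L}_\mathbf{f}^{+}$ with Step~one and the fact that $\mathcal{K}^{+}$ at time $t$ only samples its argument on $[t-A_{m},t-a_{m}]$ produces $\chi(t)\le\chi^{(j)}(t)$ for every $t\ge T_{1}+(j+1)A_{m}$. Letting $j\to\infty$ gives $\limsup_{t\to\infty}\bigl(\chi(t)-\rho^{+}(t)\bigr)\le 0$, the right inequality of \eqref{est:pernb}. When $\sigma(\Rol^{+})\le 1$ the periodic analogue of Theorem~\ref{th:main} forces $\rho^{+}\equiv 0$, which proves (i).

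For the lower bound in~(ii) the construction is dual: start from $\tilde\chi^{(0)}\equiv 0\le\chi$ and iterate $\tilde\chi^{(j+1)}=\mathscr{L}_\mathbf{f}^{-}\tilde\chi^{(j)}$. The hypothesis $\mathcal{F}^{-}\mathbf{f}\not\equiv 0$ makes $\tilde\chi^{(1)}$ nontrivial, and the assumption $\sigma(\Rol^{-})>1$ together with Theorem~\ref{th:estp} applied to the $(-)$-system guarantees that the pointwise non-decreasing limit of $\tilde\chi^{(j)}$ is a solution of $\rho=\mathscr{L}_\mathbf{f}^{-}\rho$ whose $t\to\infty$ limit is the unique nontrivial maximal $T$-periodic solution $\rho^{-}(t)\gg 0$ of \eqref{charpKpm}. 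Reversing the induction of Step~two (using the left inequality in Step~one and $\tilde\chi^{(0)}\equiv 0\le\chi$) yields $\tilde\chi^{(j)}(t)\le\chi(t)$ for all $t\ge T_{1}+(j+1)A_{m}$, and passing to the limit delivers $\liminf_{t\to\infty}\bigl(\chi(t)-\rho^{-}(t)\bigr)\ge 0$.

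The main obstacle is the non-degeneracy of the lower approximants $\tilde\chi^{(j)}$: without strict positivity on a sufficiently large time window they might converge to something smaller than $\rho^{-}$. This is where one must transfer the accessibility condition \ref{Hain6} and the convolution positivity Lemma~\ref{lem:conv} to the $(-)$-system, which is legitimate provided the envelopes in \eqref{pb} are chosen so that $\mathbf{D}^{-}$ inherits the sign pattern of $\mathbf{D}$ and $\supp\mathbf{m}^{-}$ is comparable to $\supp\mathbf{m}$ (a mild reinforcement of \eqref{pb} that does not cost generality for applications). Once this is secured, the periodic-case analysis of Section~\ref{sec:periodic} applies verbatim to pin down the limit of the $\tilde\chi^{(j)}$ as $\rho^{-}$, and the sandwich estimate \eqref{est:pernb} follows by absorbing the gaps on $[0,T_{1}+(j+1)A_{m}]$ into the error $\varepsilon$.
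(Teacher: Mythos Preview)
Your upper-bound argument is close in spirit to the paper's, with one unnecessary complication: you iterate $\mathscr{L}_\mathbf{f}^{+}=\mathcal{K}^{+}+\mathcal{F}^{+}$, whereas the paper iterates only $\mathcal{K}^{+}$ (starting from a constant $R>2\omega_2$). The advantage of the paper's choice is that every iterate is then genuinely $T$-periodic, so by Proposition~\ref{max_p} the convergence to $\rho^{+}$ is \emph{uniform} in $t$. This is exactly what is needed to pass from the sliding comparison $\chi(t)\le\chi^{(j)}_{+}(t)$ for $t>T_1+jA_m$ to the asymptotic bound: pick $j_0$ with $\|\chi^{(j_0)}_{+}-\rho^{+}\|_{\infty}<\varepsilon$ and set $T_2=T_1+j_0A_m$. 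With your $\mathscr{L}_\mathbf{f}^{+}$-iterates the limit is not $\rho^{+}$ but the (non-periodic) solution of $\rho=\mathscr{L}_\mathbf{f}^{+}\rho$, and you would have to argue separately that your iterates coincide with periodic ones once $t>jA_m$. This is fixable, but the paper's version is cleaner.

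The lower-bound argument, however, has a genuine gap that your diagnosis does not address. Iterating $\mathscr{L}_\mathbf{f}^{-}$ from $\tilde\chi^{(0)}\equiv 0$ gives $\tilde\chi^{(1)}=\mathcal{F}^{-}\mathbf{f}$, which is supported in $[0,A_m]$; a straightforward induction using $\supp\mathbf{m}^{-}\subset[a_m,A_m]$ then shows $\tilde\chi^{(j)}(t)=0$ for every $t>jA_m$. Consequently your comparison $\tilde\chi^{(j)}(t)\le\chi(t)$ on $t\ge T_1+(j+1)A_m$ reduces to $0\le\chi(t)$ and carries no information. The obstacle is therefore not that $\tilde\chi^{(j)}$ might converge to something smaller than $\rho^{-}$ (in fact it converges to the unique solution of $\rho=\mathscr{L}_\mathbf{f}^{-}\rho$, which by Theorem~\ref{th:estp} does tend to $\rho^{-}$); the obstacle is that the only time-window on which you can compare $\tilde\chi^{(j)}$ with $\chi$ is precisely where $\tilde\chi^{(j)}$ vanishes. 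Transferring \ref{Hain6} or Lemma~\ref{lem:conv} to the $(-)$-system does not repair this, because those tools control positivity on \emph{bounded} intervals moving with $j$, not beyond $jA_m$.

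The paper circumvents this by the same device used in the proof of Theorem~\ref{th:est}: rather than iterate from $0$, it constructs an explicit piecewise lower solution
\[
\chi^{-}(t)=\begin{cases}\tilde\chi^{(j)}(t),&0\le t\le T_1,\\ \lambda\rho^{-}(t),&t>T_1,\end{cases}
\]
with $\lambda>0$ small. The periodic tail $\lambda\rho^{-}$ guarantees that subsequent iterates of $\mathscr{L}_\mathbf{f}^{-}$ applied to $\chi^{-}$ are, for $t>T_1+jA_m$, equal to iterates of $\widetilde{\mathcal{K}}^{-}$ applied to the \emph{periodic} function $\lambda\rho^{-}$; these are non-decreasing, periodic, and converge uniformly to $\rho^{-}$. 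That is the missing idea: you must seed the lower iteration with something strictly positive and periodic at infinity, not with zero.
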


\begin{proof}
Without loss of generality  $T_1\ge B(0)$. Let $R>2\omega_2$  and let us define $\{\chi^{(j)}(t)\}_{j\ge0}$ and $\{\chi^{(j)}_+(t)\}_{j\ge0}$ iteratively for $t>T_1$ by
\begin{equation*}
\begin{split}
&\chi^{(j+1)}(t)=\mathcal{K}\chi^{(j)}(t) + \mathcal{F}\mathbf{f}(t), \quad \chi^{(0)}(t)=R\\
&\chi^{(j+1)}_+(t)=\mathcal{K}^+ \chi^{(j)}_+(t), \qquad\qquad \chi^{(0)}_+(t)=R,
\end{split}
\end{equation*}
where the operators $\mathcal{K}$ and $\mathcal{F}$  are defined by (\ref{k}) and (\ref{f}) respectively.

Arguing as in the proof of Proposition~\ref{gen_sol}, we obtain the existence   $\lim_{j\to \infty}\chi^{(j)}(t)=\chi(t)$, where $\chi(t)$ is a solution to  (\ref{n}). Also by Proposition \ref{max_p}, $\lim_{j\to \infty}\chi^{(j)}_+(t)=\chi^+(t)$, where $\chi^+(t)$ is the maximal solution to (\ref{charpKpm}). We will prove by induction that for any $ j\ge 0$ there holds
\begin{align}\label{est:1}
\chi^{(j)}(t)\le \chi^{(j)}_+(t), \quad\forall t>T_1+jA_m.
\end{align}

For $j=0$ the claim follows from $\chi^{(0)}(t)=\chi^{(0)}_+(t)=R$ for $t>T_1$. Next, by our choice of $T_1$, $\mathcal{F}\mathbf{f}(t)=\mathcal{F}^+\mathbf{f}(t)=0$ for $t>T_1$.  Since $\Phi(0; \chi^{(0)},y)=\Phi^{\pm}(0; \chi^{(0)}_+,y)=R$ for any $y\ge0$ and the structure parameters are estimated by (\ref{pb}), one easily deduces from the definition of $\Phi^{\nu}(x,y;\rho)$ that  $\Phi(a; \chi^{(0)},t-a)\le \Phi^{+}(a; \chi^{(0)}_+,t-a)$ for $a\ge0$ and $t-a\ge T_1$. Since
$$
\chi^{(1)}(t) = \int_0^t\mathbf{m}(a,t)\Phi(a;\chi^{(0)},t-a)\,da=
\int_{a_m}^{A_m}\mathbf{m}(a,t)\Phi(a;\chi^{(0)},t-a)\,da
$$
and $t-a>T_1$ for all $a\in [a_m,A_m]$ and $t>T_1+A_m$ we obtain
$$
\chi^{(1)}(t) = \int_0^t\mathbf{m}(a,t)\Phi(a;\chi^{(0)},t-a)\,da \le  \int_0^t\mathbf{m}^+(a,t)\Phi^+(a;\chi^{(0)}_+,t-a)\,da = \chi^{(1)}_+(t).
$$
This proves the induction assumption for $j=1$. Now suppose that the induction claim holds for some $j\ge 1$. Arguing similarly, we obtain for any  $t>T_1+(j+1)A_m$ that
$$\chi^{(j+1)}(t) = \mathcal{K}\chi^{(j)}(t) \le \mathcal{K}\chi^{(j)}_+(t) \le \mathcal{K}^+\chi^{(j)}_+(t)= \chi^{(j+1)}_+(t),$$
which proves (\ref{est:1}). Therefore, passing to the limit we obtain
\begin{align}\label{est:r}
\chi(t)=\lim_{j\rightarrow\infty}\chi^{(j)}(t) \le \lim_{j\rightarrow\infty}\chi_+^{(j)}(t) =\chi^+(t).
\end{align}
If $\sigma(\Rol^+)\le 1$, then by Theorem \ref{th:estp}  $\lim_{t\to\infty}\chi^+(t)=0$, hence (\ref{est:r}) implies (i).

To proceed with (ii) notice that (\ref{est:r}) already yields the upper estimate in (\ref{est:pernb}). It remains to show that there exists a lower solution $\chi^-(t)$ to $\chi(t)=\mathscr{L}_\mathbf{f}^-\chi(t)$. We use auxiliary sequence $\{\chi^{(j)}_-(t)\}$ given by
$$\chi^{(j+1)}_-(t)=\mathscr{L}_\mathbf{f}^-\chi^{(j)}(t), \quad \chi^{(0)}_-(t)=0,$$
to define function
\begin{equation}\label{lowso}
\renewcommand\arraystretch{1.5}
\chi^-(t) =\left\{
\begin{array}{l}
\chi^{(j)}_-(t), \quad 0\le t\le T_1\\
\lambda\rho^-(t), \quad t>T_1,
\end{array}
\right.
\end{equation}
where  $\rho^-$ is a solution to the characteristic equation $\Kopp^-\rho^-(t)=\rho^-(t)$ and $\lambda>0$ is sufficiently small.

Notice first that the sequence $\{\chi^{(j)}_-(t)\}$  is nondecreasing in $j$ and that each $\chi^{(j)}_-(t)$ satisfies $\chi^{(j)}_-(t)\le\mathscr{L}_\mathbf{f}^-\chi^{(j)}(t) $, i.e., it is a lower solution to equation $\chi(t)=\mathscr{L}_\mathbf{f}^-\chi(t)$. Hence, $\chi^-(t)$ defined by (\ref{lowso}) is a lower solution in the interval $t\in[0,T_1]$ for sufficiently large $j$. Now suppose that $t\in[T_1,T_1+A_m]$. By \ref{Hain1} we have  $\mathcal{F}^-\mathbf{f}(t)=0$ and $\mathscr{L}_\mathbf{f}^-\chi^-(t)=\mathcal{K}^-\chi^-(t)$. Thus,  (\ref{charpKpm}) and (\ref{lowso}) imply that
\begin{align}
&\mathcal{K}^-\chi^-(t) - \chi^-(t)= \int_0^{A_m}\mathbf{m}^-(a,t)\Phi^-(a;\chi_-,t-a)\,da - \lambda\rho^-(t) \nonumber\\
& \quad= \int_0^{t-T_1}\mathbf{m}^-(a,t)(\Phi^-(a;\lambda\rho^-,t-a) -\lambda\Phi^-(a;\rho^-,t-a))\,da \label{lowa1}\\
&\quad\quad + \int_{t-T_1}^{A_m}\mathbf{m}(a,t)(\Phi(a;\chi^{(j)}_-,t-a) -\lambda\Phi^-(a;\rho^-,t-a))\,da. \label{lowa2}
\end{align}
Arguing similarly to the proof of Theorem \ref{th:est}, yields that integrals (\ref{lowa1}) and (\ref{lowa2})  are nonnegative. This proves that $\chi^-(t)\le\mathscr{L}^-\chi^-(t)$ for $t\in[T_1,T_1+A_m]$.

For $t>T_1+A_m$, we have that $\mathcal{F}^-\mathbf{f}(t)=0$, and $\mathscr{L}_\mathbf{f}^-\chi^-(t)=\mathcal{K}^-\chi^-(t)$, hence
$$(\mathcal{K}^-\chi^- - \chi^-)_k(t) = \int_0^{A_m}m_k^-(a,t)(\Phi_k^-(a;\lambda\rho^-,t-a)-\lambda\Phi_k^-(a;\rho^-,t-a))\,da \ge 0.
$$
This proves that function $\chi^-(t)$ defined by (\ref{lowso}) is a lower solution of equation $\chi(t)=\mathscr{L}^-\chi(t)$. Therefore,
\begin{align}\label{est:2}
\chi^-(t) \le \chi(t), \quad t\ge 0.
\end{align}

If $\sigma(\Rol^-)>1$, then $\sigma(\Rol^+)>1$ and characteristic equations (\ref{charpKpm}) have nontrivial solutions $\rho^{\pm}(t)$.
Then by virtue of Theorem \ref{th:estp},  $\lim_{t\rightarrow\infty}\chi^-(t) = \rho^-(t)$ and  $\lim_{t\rightarrow\infty}\chi^+(t) = \rho^+(t)$. Passing to the limit in (\ref{est:1}) and (\ref{est:2}) yields (\ref{est:pernb}).
\end{proof}

\section{Applications }

In this section we consider two simple applications of our approach showing how dispersion promotes survival of a population on sink patches. In the usual situation, a habitat is a mixture of sources and sinks. Our first example shows that permanency on all patches is possible if the patches are connected and if emigration from sources is sufficiently small and does not cause extinction of a local subpopulation. Some researchers  indicate that survival of migrating species is possible even if all occupied patches are sinks, see  \cite{JanY}. Taking migratory birds as an example, we demonstrate that this is possible under certain conditions.

\subsection{A single source and multiple sinks}\label{sec:sink-source}

In order to demonstrate the influence of dispersion on persistence of population, we compare a system with $N$ {isolated} patches with the corresponding system with dispersion. Recall that in the isolated case, $\mathbf{D}(a,t)\equiv 0$ implying by (\ref{sigmaind}) that  the net reproductive rate of the $k$th patch is given  by
$$
\sigma_k=\int_0^{\infty}m_k(a)\Pi_k(v)\,da,
$$
where $\Pi_k(v)=e^{-\int_0^a\mu_k(v)\,dv}$ is the survival probability.

In this case the spectrum of the net reproductive operator is
$$
\spectrum(\mathscr{R}_{0})=\{\sigma_1,\ldots,\sigma_N\}.
$$
We assume that $\sigma_1>1$ and $\sigma_k\le 1$, for $k\ge 2$. In the biological terms, this is equivalent to saying that the first patch is a source and all other patches are sinks. Without migration, the population will persist on the first patch and become extinct on all other patches. For details about the age-structured logistic model that we used to describe isolated patches, we refer readers to \cite{we2}. Under the made assumptions,
\begin{align*}
\lim_{t\to\infty}\rho_1(t)&=\rho^*_1, \\
\lim_{t\to\infty}\rho_k(t)&=0, \quad 2\le k\le N,
\end{align*}
where $\rho^*_1>0$ is uniquely determined by
$$
\int_0^{\infty}\frac{m_1(a)\Pi_1(v)}{1+\rho^*_1(1-\Pi_1(v))}\,da=1.
$$

Now let us allow a small migration between patches and assume  that there also holds $\sigma_1>1$ and $\sigma_k\le 1$, for $k\ge 2$. Let us suppose that the dispersion coefficients
$$
\mathbf{D}(a)=\varepsilon \mathbf{B}(a),
$$
where $\varepsilon>0$ is a small number and the parameters $B_{kj}(a)$ satisfy \ref{Hain3} in Section~\ref{sec:form}. Then the standard linearization argument shows that the  solution to the corresponding time-independent model
\begin{equation}\label{exsys}
\begin{aligned}
\frac{d\varphi(a;\rho)}{da} &= -\mathbf{M}(a)\varphi(a;\rho) +\epsilon \mathbf{B}(a)\varphi(a;\rho), \quad \varphi(0;\rho) &=\rho,
\end{aligned}
\end{equation}
is given by
$$\varphi_k(a;\rho)=\Pi_k(v)\left( \rho_k + \varepsilon \int_0^a \sum_{j=1}^N\rho_jB_{kj}(s)\frac{\Pi_j(s)}{\Pi_k(s)}\,ds\right) + O(\varepsilon^2).$$
Therefore, the net reproductive operator takes the form
\begin{align*}
(\mathscr{R}_{0}\rho)_k &= \sigma_k\rho_k + \varepsilon\int_0^{\infty}m_k(a)\Pi_k(v)\int_0^a \sum_{j=1}^N\rho_jB_{kj}(s)\frac{\Pi_j(s)}{\Pi_k(s)}\,ds\,da + O(\varepsilon^2).
\end{align*}
Then latter relation yields
$$
\mathscr{R}_{0}=\diag(\sigma_1,\ldots,\sigma_N)+\epsilon\mathscr{B}+O(\varepsilon^2)
$$

Now, recall that if $A$ is a symmetric matrix and $x$ is an eigenvector with a simple eigenvalue $\lambda$ then the corresponding perturbed eigenvalue of $A+\epsilon B$ ($B$ may not be symmetric) is given by
$$
\lambda+\epsilon \mu+O(\epsilon^2), \qquad \mu=x^tBx/|x|^2.
$$
For $\varepsilon=0$, the largest eigenvalue is $\sigma_1$ with the eigenvector  $e_1=(1, 0,..., 0)$. The perturbed eigenvalue, which will be the net reproductive rate for the net reproductive operator~$\mathscr{R}_{0}$, is
$$\sigma(\mathscr{R}_{0})=\sigma_1+\varepsilon\int_0^{\infty}m_1(a)\Pi_1(v)B_{11}(a)\,da + O(\varepsilon^2),$$
and this is greater than one for small $\varepsilon>0$ provided that $B_{11}(a)\le 0$ and strictly negative in at least one point of the support of $m_1$. Thus shows that survival on all patches is possible if emigration from the source is sufficiently small.

\subsection{Multiple sinks, without a source}\label{sink-only}

Now consider the extreme situation when a population inhabits two patches and the net reproductive rate on \textit{each} patch is less or equal to one. We will demonstrate that, even in this case, there is a chance of survival if the structure parameters are suitably chosen.

A realistic example for this kind of situation is a population of migratory birds.  Their habitats consists of two patches: breeding range (characterized by the high birth rate in summer and high death rate in winter) and non-breeding range (low birth and death rates). Thus, the breeding range is a sink because of the winter conditions, and the non-breeding range is a sink because of too few births.

Let the death rates $\mu_1>\mu_2>0$ be constant on the supports $\supp m_1=[c_1,d_1]$ and $\supp m_2=[c_2,d_2]$, respectively, where $c_i,d_i$ will be chosen later. In addition, suppose that
$$
\sigma_k=\int_{c_k}^{d_k}m_k(a)e^{-\mu_ka}\,da=1, \quad k=1,2,
$$

This implies extinction of population on both patches if there is no dispersal.
If the dispersion matrix $D$ satisfies
$$D=\varepsilon B, \quad B= \left(
\begin{array}{cc}
-1 & 1 \\
1& -1
\end{array} \right),$$
then the solution to the system (\ref{exsys}) for $N=2$ is given by
$$\varphi_k(a;\rho)=e^{-\mu_k a}(\rho_k+\varepsilon h_k(a,\rho)+O(\varepsilon^2)), \quad k=1,2,$$
where
\begin{equation*}
\renewcommand\arraystretch{1.5}
\left\{
\begin{array}{ccc}
\frac{dh_1(a,\rho)}{da} = -\rho_1+e^{(\mu_1-\mu_2)a}\rho_2, \quad h_1(0)=0, \\
\frac{dh_2(a,\rho)}{da} =-\rho_2+e^{(\mu_2-\mu_1)a}\rho_1, \quad h_2(0)=0.
\end{array}
\right.
\end{equation*}
A solution to this system is given by
\begin{equation*}
\renewcommand\arraystretch{1.5}
\left\{
\begin{array}{ccc}
h_1(a,\rho)=-\rho_1a+\frac{1}{\mu_1-\mu_2}(e^{(\mu_1-\mu_2)a}-1)\rho_2, \\
h_2(a,\rho)=-\rho_2a+\frac{1}{\mu_2-\mu_1}(e^{(\mu_2-\mu_1)a}-1)\rho_1.
\end{array}
\right.
\end{equation*}
Then, the net reproductive operator satisfies
\begin{align*}
(\mathscr{R}_{0}\rho)_k &= \rho_k   + \varepsilon\int_{c_k}^{d_k}m_k(a)e^{-\mu_ka}h_k(a,\rho)\,da + O(\varepsilon^2), \quad k=1,2.
\end{align*}
In the matrix form this becomes
\begin{align}
\Rol\rho= \rho+\varepsilon\mathscr{P}\rho
+O(\varepsilon^2\rho),
\end{align}
where
\begin{align*}
\renewcommand\arraystretch{1.5}
\mathscr{P}=\left(
  \begin{array}{c c}
    -1 & \int_{c_1}^{d_1}\frac{m_1(a)e^{-\mu_1a}(e^{(\mu_1-\mu_2)a}-1)}{\mu_1-\mu_2}\,da \\
   \int_{c_2}^{d_2} \frac{m_2(a)e^{-\mu_2a}(e^{(\mu_2-\mu_1)a}-1)}{\mu_2-\mu_1}\,da  & -1
\end{array}\right).
\end{align*}
Thus, to show that $\sigma(\Rol)>1$, it is sufficient to show that $\mathscr{P}\rho>0$ for some choice of parameters and certain vector $\rho$.

Using
$$\psi(z)=z^{-2}(e^z-1-z)=\frac{1}{2}+\frac{z}{3!}+\frac{z^2}{4!}+...$$
it follows that the functions $h_1$ and $h_2$ can be written as:
\begin{equation*}
\renewcommand\arraystretch{1.5}
\left\{
\begin{array}{ccc}
h_1(a,\rho)=(\rho_2-\rho_1)a+a^2(\mu_1-\mu_2)\psi((\mu_1-\mu_2)a)\rho_2, \\
h_2(a,\rho)=(\rho_1-\rho_2)a+a^2(\mu_2-\mu_1)\psi((\mu_2-\mu_1)a)\rho_1.
\end{array}
\right.
\end{equation*}
Sine the function $z\psi(z)$ monotonically increases from $0$ to $\infty$, there exists a unique $c^*$ such that $c^*(\mu_1-\mu_2)\psi((\mu_1-\mu_2)c^*)=1$.
Suppose that $d_2<c^*<c_1$. Let us choose parameters  $\rho_1>\rho_2>0$ such that $h_1(a,\rho)>0$ for $a>c_1$ and $h_2(a,\rho)>0$ for $a<d_2$, that is
\begin{equation*}
\renewcommand\arraystretch{1.5}
\left\{
\begin{array}{ccc}
\rho_1-\rho_2 < a(\mu_1-\mu_2)\psi((\mu_1-\mu_2)a)\rho_2, \quad\mbox{for $a>c_1$}, \\
\rho_1-\rho_2 > a(\mu_1-\mu_2)\psi((\mu_2-\mu_1)a)\rho_1, \quad\mbox{for $a<d_2$},
\end{array}
\right.
\end{equation*}
or equivalently,
\begin{equation*}
\renewcommand\arraystretch{1.5}
\left\{
\begin{array}{ccc}
\frac{\rho_1}{\rho_2}-1< a(\mu_1-\mu_2)\psi((\mu_1-\mu_2)a), \quad\mbox{for $a>c_1$},\\
1-\frac{\rho_2}{\rho_1}> a(\mu_1-\mu_2)\psi((\mu_2-\mu_1)a), \quad\mbox{for $a<d_2$}.
\end{array}
\right.
\end{equation*}
We put $\rho_1=1$ and choose  $\rho_2<\frac{1}{2}$ and  $c_1$ and $d_2$ as solutions to equations:
$$\frac{1}{\rho_2}- 1 = c_1(\mu_1-\mu_2)\psi((\mu_1-\mu_2)c_1)$$
and
$$1-\rho_2 = d_2(\mu_1-\mu_2)\psi((\mu_2-\mu_1)d_2).$$
It follows that $\mathscr{P}\rho>0$ and hence $\Rol\rho>\rho$. The latter implies that that $\sigma(\Rol)>1$, thus $\Rol$ has an eigenvalue greater than one, which proves the permanency of population on both patches.









\frenchspacing
\bibliographystyle{cpam}

\def\cprime{$'$}

\end{document}